
\documentclass[12pt]{amsart}

\usepackage{amssymb,color}

\textwidth=1.1\textwidth
\hoffset=-0.5in
\setlength{\parskip}{0.2cm}

\textheight=1.1\textheight\voffset=-0.2in


\numberwithin{equation}{section}

\newtheorem{theorem}[subsection]{Theorem}
\newtheorem{lemma}[subsection]{Lemma}
\newtheorem{Prop}[subsection]{Proposition}

\newtheorem{conjecture}[subsection]{Conjecture}
\newtheorem{remark}[subsection]{Remark}

\theoremstyle{definition}

\theoremstyle{remark}



\newcommand{\bz}{{\bf{Z}}}

\newcommand{\zp}{{{\bf{Z}}/p}}

\newcommand{\fp}{{\bf{F}}_p}

\newcommand{\field}{{\bf F}}
\newcommand{\rfield}{{\bf k}}

\newcommand{\og}{{\overline{\gamma}}}
\newcommand{\of}{{\overline{f}}}

\newcommand{\rightquot}{/\!\!/}


\newcommand{\sltwo}{SL_2(\field_p)}

\newcommand{\lt}{\mathop{\rm LT}}
\newcommand{\lm}{\mathop{\rm LM}}

\newcommand{\tat}{t\^ete-\`a-t\^ete}



\title[Elementary abelian $p$-groups]
{Rings of invariants for modular representations
of elementary abelian $p$-groups}

\author{H.E.A. Campbell}
\address{Department of Mathematics\\
\hfil\break\indent University of New Brunswick, Fredricton, NB, E3B 5A3, Canada }
\email{heac@unb.ca}

\author{R.J. Shank}
\address{School of Mathematics, Statistics \&  Actuarial Science \\
\hfil\break\indent University of Kent, Canterbury, CT2 7NF, UK}
\email{R.J.Shank@kent.ac.uk}

\author{D.L. Wehlau}
\address{Department of Mathematics and Computer Science \\
\hfil\break\indent Royal Military College, Kingston, ON, K7K 7B4, Canada}
\email{wehlau-d@rmc.ca}


\subjclass{13A50}
\date{\today}

\begin{document}

\begin{abstract}
We initiate a study of the rings of invariants of modular representations
of elementary abelian $p$-groups. 
With a few notable exceptions, the modular representation theory of an elementary abelian $p$-group
is wild. However, for a given dimension, it is possible to parameterise the
representations. We describe parameterisations for modular representations of dimension two and
of dimension three. We compute the ring of invariants for all two dimensional representations; these
rings are generated by two algebraically independent elements. 
We compute the ring of invariants of the symmetric square of a two dimensional representation;
these rings are hypersurfaces. 
We compute the ring of invariants
for all three dimensional representations of rank at most three; these rings are complete intersections with 
embedding dimension at most five. We conjecture that the ring of invariants for any 
three dimensional representation of an elementary abelian $p$-group  is a complete intersection. 
 \end{abstract}

\maketitle

\section{Introduction}
We initiate a study of the rings of invariants of modular representations
of elementary abelian $p$-groups. 
With a few notable exceptions, the modular representation theory of an elementary abelian $p$-group
is wild, see for example \cite[\S 4.4]{benson_rep1}. However, for a given dimension, it is possible to parameterise the
representations. We describe parameterisations for modular representations of dimension two and
of dimension three. We compute the ring of invariants for all two dimensional representations; these
rings are generated by two algebraically independent elements. 
We compute the ring of invariants of the symmetric square of a two dimensional representation;
these rings are hypersurfaces. 
We compute the ring of invariants
for all three dimensional representations of rank at most three; these rings are complete intersections with 
embedding dimension at most five. We conjecture that the ring of invariants for a 
three dimensional representation of rank $r$ is a complete intersection with embedding dimension
at most $\lceil r/2 \rceil +3$.
 
Let $V$ denote an $n$ dimensional representation of a group $G$, over a field $\field$ of characteristic $p$,
for a prime number $p$. We will usually assume that $G$ is finite and that $p$ divides the order of $G$,
in other words, that $V$ is a modular representation of $G$.
We view $V$ as a left module over the group ring $\field G$ and the dual, $V^*$,
as a right $\field G$-module. Let $\field[V]$ denote the symmetric algebra on $V^*$.
The action of $G$ on $V^*$ extends to an action by 
degree preserving algebra automorphisms  on $\field[V]$.
Choosing a basis $\{x_1,x_2,\ldots,x_n\}$ allows us to identify $\field[V]$ with the algebra
of polynomials $\field[x_1,x_2,\ldots,x_n]$.
Our convention that $\field[V]$ is a right $\field G$-module is consistent
with the convention used by the invariant theory package in the computer algebra 
software Magma~\cite{magma}. The ring of invariants, $\field[V]^G$, is the subring
of $\field[V]$ consisting of those polynomials fixed by the action of $G$.
Note that elements of $\field[V]$ represent polynomial functions on $V$ and
that elements of $\field[V]^G$ represent polynomial functions on the set of orbits
$V/G$. For $G$ finite and $\field$ algebraically closed, $\field[V]^G$ is the
ring of regular functions on the categorical quotient $V\rightquot G$.
For background material on the invariant theory of finite groups, see \cite{Benson},
\cite{CW}, \cite{DK}, or \cite{NS}.

Choosing a basis for $V$ (or $V^*$) determines a group homomorphism $\rho:G \to GL_n(\field)$.
Conversely, a group homomorphism $\rho$ can be used to define a right $\field G$-module structure
on $\field^n$.  Our philosophy is to use the set of group homomorphism 
 ${\rm hom}(G,GL_n(\field))$  to parameterise representations of $G$ over $\field$ of dimension $n$.
From this point of view, every group homomorphism determines a subring 
$\field[x_1,\ldots,x_n]^{\rho(G)}\subseteq \field[x_1,\ldots,x_n]$.
 Conjugation can be used to define a left action of $GL_n(\field)$ on ${\rm hom}(G,GL_n(\field))$  and
the left action of the automorphism group ${\rm Aut}(G)$ on $G$ induces a right action of ${\rm Aut}(G)$ on
${\rm hom}(G,GL_n(\field))$. Equivalent representations give conjugate group homomorphisms and
the corresponding subrings are isomorphic but not necessarily equal. The action of
${\rm Aut}(G)$ preserves the image of the group homomorphism and, therefore,  automorphisms
in the same ${\rm Aut}(G)$-orbit determine the same subring.
Our goal is to compute the subrings of $\field[x_1,\ldots,x_n]$ corresponding to the $GL_n(\field)$-orbits
of ${\rm hom}(G,GL_n(\field))\rightquot {\rm Aut}(G)$. In practice, we consider a variety 
$\mathcal{V}\subset {\rm hom}(G,GL_n(\field))$ and a subgroup $H< GL_n(\field)$, which acts on 
$\mathcal{V}$ by conjugation, and compute the subrings corresponding to the $H$-orbits of 
$\mathcal{V}\rightquot {\rm Aut}(G)$. If $G=(\zp)^r=\langle e_1,\ldots,e_r\rangle$ is an elementary abelian
$p$-group, then ${\rm Aut}(G)\cong GL_r(\fp)$.

We make extensive use of the theory of SAGBI bases to compute rings of invariants. 
A SAGBI basis is the {\bf S}ubalgebra {\bf A}nalogue of a {\bf G}r\"obner {\bf B}asis for {\bf I}deals.
The concept was introduced independently by Robbiano-Sweedler~\cite{rs} and Kapur-Madlener~\cite{km};
a useful reference is Chapter~11 of Sturmfels~\cite{Sturmfels}. 
We adopt the convention that a monomial
is a product of variables and a term is a monomial with a coefficient. For a polynomial $f\in\field[x_1,\ldots,x_n]$,
we denote the lead monomial of $f$ by $\lm(f)$ and the lead term of $f$ by $\lt(f)$.
For $\mathcal{B}=\{f_1,\ldots,f_{\ell}\}\subset \field[x_1,\ldots,x_n]$ and $I=(i_1,\ldots,i_{\ell})$, a sequence of
non-negative integers, denote $\prod_{j=1}^{\ell}f^{i_j}$ by $f^I$. A {\it \tat\ } for $\mathcal{B}$ is a pair
$(f^I,f^J)$ with $\lm(f^I)=\lm(f^J)$; we say that a \tat\ is {\it non-trivial} if the support of $I$ is disjoint from the support of $J$. 
The reduction of an $S$-polynomial is a fundamental calculation in the theory of Gr\"obner bases.
The analogous calculation for SAGBI bases is the {\it subduction} of a \tat.  
A subset $\mathcal{B}$ of a subalgebra $A\subset\field[x_1,\ldots,x_n]$ is a SAGBI basis for $A$
if the lead monomials of the elements of $\mathcal{B}$ generate the lead term algebra  of $A$ or, equivalently, every non-trivial \tat\ for $\mathcal{B}$ subducts to zero.   For background material on term orders and Gr\"obner bases, 
we recommend \cite{AL}.

We conclude the introduction with a summary.
In Section~\ref{sdx_sec}, we introduce the SAGBI/Divide-by-$x$ algorithm which, in principle, 
can be used to compute the ring of invariants for any modular representation of a $p$-group.
We also prove a result (Theorem~\ref{genthm}) which provides sufficient conditions to show that a 
set of invariants generates the full ring of invariants.  In the later sections we repeatedly use this result to show that
we have correctly constructed a set of generators for the ring of invariants.
Section~\ref{2drep_sec} is devoted to describing the two dimensional modular representations
of $p$-groups and computing the corresponding rings of invariants.   We observe that any $p$-group $G$ having a faithful
two dimensional modular representation must be elementary abelian and the corresponding ring of invariants is 
a polynomial algebra generated
by two algebraically independent polynomials, one of degree $1$ and the other of degree $|G|$.
An explicit description of the degree $|G|$ invariant plays
an important r\^ole in the calculation of the ring of invariants for the symmetric square
of the dimension two representation. This calculation is is  given in Section~\ref{ss_sec};
in all cases the ring of invariants is a hypersurface with $4$ generators and $1$ relation. 
In Section~\ref{cl3d_sec}, we describe the three dimensional representations of $(\zp)^r$.
Section~\ref{field_section} includes the construction of a particularly nice generating set for
the field of fractions of the ring of invariants for a generic three dimensional representation of $(\zp)^r$.
In Section~\ref{r2d3_sec}, we compute the ring of invariants for all three dimensional representations
of $(\zp)^2$ and in Section~\ref{r3d3_sec} we compute the ring of invariants for all
three dimensional representations of $(\zp)^3$.   We classify these representations first by using their fixed point
sets and then by polynomial conditions.  In all cases the ring of invariants is a complete intersection.
The final section is devoted to conclusions and conjectures.

\section{The SAGBI/Divide-by-$x$ Algorithm}\label{sdx_sec}

In this section $G$ is a $p$-group, $\field$ is an arbitrary field of characteristic $p$ and
$V$ is an $\field G$-module of dimension $n$. Choose a basis $\{x,y_1,\ldots,y_{n-1}\}$ for $V^*$
so that $x\in (V^*)^G$ and $y_iG\in{\rm Span}_{\field}\{x,y_j\mid j\leq i\}$, i.e., the action of $G$
is by upper-triangular matrices. We use the graded reverse lexicographic order
with $x<y_1<y_2<\cdots <y_{n-1}$.
Suppose that we have constructed homogeneous $f_1,\ldots,f_{\ell}\in\field[V]^G$ with
$\lm(f_i)\in\field[y_1,\ldots,y_{n-1}]$ .
Define $\mathcal{B}:=\{x,f_1,\ldots,f_{\ell}\}$ and let $A$ denote the algebra generated by $\mathcal{B}$.

\begin{theorem}\label{genthm} Suppose $A[x^{-1}]=\field[V]^G[x^{-1}]$, $\mathcal{B}$ is a SAGBI bases for
$A$ and $\field[V]^G$ is an integral extension of $A$. Then $A=\field[V]^G$ and $\mathcal{B}$
is SAGBI basis for $\field[V]^G$.
\end{theorem}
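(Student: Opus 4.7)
The conclusion splits into two claims---$A = \field[V]^G$ and $\mathcal{B}$ is a SAGBI basis for $\field[V]^G$---and the second follows formally from the first combined with the hypothesis that $\mathcal{B}$ is a SAGBI basis for $A$. So the real task is to prove the reverse containment $\field[V]^G \subseteq A$. Because $G$ acts by degree-preserving automorphisms, $\field[V]^G$ is a graded subalgebra of $\field[V]$, and it suffices to show $f \in A$ for every homogeneous $f \in \field[V]^G$. The plan is to do this by induction on the lead monomial $\lm(f)$ in the graded reverse lexicographic order.

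Fix such an $f$. The localization hypothesis $A[x^{-1}]=\field[V]^G[x^{-1}]$ gives an integer $k \geq 0$ with $x^k f \in A$. (The integrality hypothesis offers a convenient uniform version: $\field[V]^G$ is finitely generated as an $A$-module, and clearing $x$-denominators on a finite generating set produces a single $N$ with $x^N \field[V]^G \subseteq A$; one can then take $k = N$ for every $f$. This is not logically essential but conceptually cleaner.) With $x^k f \in A$ in hand, the SAGBI hypothesis for $A$ asserts that $\lm(x^k f) = x^k \lm(f)$ lies in the multiplicative monoid generated by $\{x,\lm(f_1),\ldots,\lm(f_\ell)\}$; that is, there exist non-negative integers $a_0,\ldots,a_\ell$ with $x^k \lm(f) = x^{a_0} \lm(f_1)^{a_1} \cdots \lm(f_\ell)^{a_\ell}$.

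Now the flag structure of the chosen basis does the essential work. Each $\lm(f_i) \in \field[y_1,\ldots,y_{n-1}]$ is free of $x$, so comparing $x$-degrees in the displayed equation forces $a_0 \geq k$, and dividing by $x^k$ yields $\lm(f) = x^{a_0-k} \lm(f_1)^{a_1} \cdots \lm(f_\ell)^{a_\ell}$. After rescaling by a suitable $c \in \field$, the element $g := c \cdot x^{a_0-k} f_1^{a_1} \cdots f_\ell^{a_\ell} \in A$ satisfies $\lt(g) = \lt(f)$. Hence $f-g$ is a homogeneous element of $\field[V]^G$ of the same degree as $f$ but with strictly smaller lead monomial, and the inductive hypothesis gives $f-g \in A$, whence $f \in A$.

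The induction terminates because the monomial order is a well-order and all intermediate differences live in the finite-dimensional graded piece $\field[V]^G_{\deg f}$. The main obstacle --- and the only non-formal step --- is the $x$-degree bookkeeping in the SAGBI step: one must verify that the exponent of $x$ in the factorisation of $\lm(x^k f)$ is at least $k$ so that dividing out $x^k$ stays in the monoid. This is exactly where the two structural features of the chosen basis enter: $x \in (V^*)^G$ puts $x$ into $\mathcal{B}$ with $\lm(x)=x$, and $\lm(f_i)\in\field[y_1,\ldots,y_{n-1}]$ ensures that only the $x^{a_0}$ factor can contribute to the $x$-degree of the product. Without either of these, the divide-by-$x$ step could fail, so these hypotheses are doing real work rather than being cosmetic choices.
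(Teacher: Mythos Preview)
Your proof is correct and takes a genuinely different route from the paper's. The paper argues structurally: it shows that $xA$ is a prime ideal (using the SAGBI hypothesis together with the grevlex property that once $x$ divides $\lm(f)$ it divides every smaller monomial of that degree), invokes Nagata's criterion to conclude that $A$ is a UFD since $A[x^{-1}]=\field[V]^G[x^{-1}]$ is one, and then obtains $A=\field[V]^G$ from normality of $A$ together with the integrality hypothesis. You instead run a direct subduction: for homogeneous $f\in\field[V]^G$ you write $x^kf\in A$, factor $\lm(x^kf)$ via the SAGBI basis, observe that $\lm(f_i)$ being $x$-free forces the $x$-exponent in that factorisation to be at least $k$, divide out $x^k$ to produce $g\in A$ with $\lt(g)=\lt(f)$, and descend by induction on $\lm(f)$. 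Your argument is more elementary---no UFD machinery, no Nagata---and, as you correctly observe, it renders the integrality hypothesis superfluous; in the paper's proof integrality is indispensable, since normality alone cannot force $A=\field[V]^G$. On the other hand, the paper's route isolates the structural fact that $A$ itself is a UFD, which is of independent interest and which your argument does not directly yield. Both proofs ultimately exploit the same feature of the setup---that $x$ is the only SAGBI generator contributing $x$ to a lead monomial---but package it differently.
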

\begin{proof}
Since $A$ and $\field[V]^G$ have the same field of fractions and $\field[V]^G$ is an integral extension of $A$,
to prove $A=\field[V]^G$ it is sufficient to show that $A$ is normal, i.e., integrally closed in its field of fractions.
Unique Factorisation Domains are normal; therefore it is sufficient to show that $A$ is UFD.

It is well known that the ring of invariants for a modular representation of a $p$-group is a UFD
(see for example \cite[Corollary~3.9.3]{Benson}).
Therefore $\field[V]^G$ is a UFD. Thus $A[x^{-1}]=\field[V]^G[x^{-1}]$ is a UFD.
It follows from \cite[Theorem~20.2]{Mat}
(or \cite[Lemma 6.3]{Benson}) that if $xA$ is a prime ideal, then $A$ is a UFD.

Suppose $f,g\in A$ with $fg\in xA$. Since $A$ is graded we may assume $f$ and $g$ are homogeneous.
Since $x\field[V]$ is prime, without loss of generality,  we may assume $f\in x\field[V]$. Hence the lead monomial
$\lm(f)$ is divisible by $x$. $\mathcal{B}$ is a SAGBI basis for $A$ and $f\in A$. Thus $f$ subducts to $0$. Using the
grevlex order with $x$ small, every monomial of degree $\deg(f)$, less than $\lm(f)$, is divisible by $x$.
Thus at each stage of the
subduction, there is a factor of $x$. Hence $f\in xA$ and $xA$ is a prime ideal.
\end{proof}

The usual SAGBI bases algorithms applied to $\mathcal{B}$ proceeds by subducting \tat s and adjoining
non-zero subductions to produce a sequence
$\mathcal{B}=\mathcal{B}_0\subseteq\mathcal{B}_1\subseteq\mathcal{B}_2\subseteq\cdots$ with each $\mathcal{B}_i$ a
generating set for $A$ (see, for example \cite[Chapter 11]{Sturmfels}). The {\it SAGBI/Divide-by-$x$}
algorithm is an extension of the SAGBI algorithm: if a non-zero subduction $f$ has lead monomial
$x^my^E$, then $fx^{-m}$ is adjoined rather than $f$. The SAGBI/Divide-by-$x$ algorithm produces
a sequence of sets $\mathcal{B}=\mathcal{B}_0\subseteq\mathcal{B}_1\subseteq\mathcal{B}_2\subseteq\cdots$ and
a sequence of ring extensions $A\subseteq A_1 \subseteq A_2 \subseteq \cdots$, with $\mathcal{B}_i$
a generating set of $A_i$.

\begin{theorem} \label{sdx} Suppose that $x\in\mathcal{B}$, $\mathcal{B}$ generates $A$,
and $A[x^{-1}]=\field[V]^G[x^{-1}]$.
Further suppose there exists $h_1,\ldots,h_{n-1}\in\mathcal{B}$ such that $\lm(h_i)=y_i^{a_i}$ for positive integers $a_i$.
Then the SAGBI/Divide-by-$x$ algorithm applied to $\mathcal{B}$ terminates with a SAGBI basis for $\field[V]^G$.
\end{theorem}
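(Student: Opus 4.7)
The plan is to run the SAGBI/Divide-by-$x$ algorithm, show that the ascending chain of lead term algebras it produces must stabilize, and then apply Theorem~\ref{genthm} to the terminal algebra $A_N$. Three things must be verified in order: (i) every element adjoined by the algorithm lies in $\field[V]^G$, and the equation $A_k[x^{-1}]=\field[V]^G[x^{-1}]$ is preserved throughout; (ii) the chain $\lt(A_0)\subseteq\lt(A_1)\subseteq\cdots$ stabilizes, forcing the algorithm to halt; and (iii) the $h_i$ force $\field[V]^G$ to be integral over $A_N$, so that Theorem~\ref{genthm} applies.

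For (i), I would argue by induction on the stage. A non-zero subduction $f$ is by construction a homogeneous element of $A_k\subseteq\field[V]^G$. The critical feature of the grevlex order with $x$ smallest is that the rightmost coordinate of an exponent vector is the $x$-exponent; if $\lm(f)=x^m y^E$ and $x^{m'}y^{E'}$ is any other monomial of $f$ of the same total degree, then strict smallness in grevlex forces the rightmost non-zero coordinate of the exponent difference to be negative, and in particular $m'\ge m$. Hence $x^m\mid f$, so $fx^{-m}\in\field[V]$, and $G$-invariance is preserved since $f$ and $x$ are both fixed by $G$. Since $fx^{-m}\in A_k[x^{-1}]=\field[V]^G[x^{-1}]$, adjoining $fx^{-m}$ to $\mathcal{B}_k$ keeps both hypotheses intact in $A_{k+1}$.

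For (ii), note that each $\lt(A_k)$ contains $\lt(x)=x$ and $\lt(h_i)=y_i^{a_i}$ for $1\le i\le n-1$, hence is a graded module over $R:=\field[x,y_1^{a_1},\ldots,y_{n-1}^{a_{n-1}}]$. The ambient polynomial ring $\field[x,y_1,\ldots,y_{n-1}]$ is a free $R$-module of finite rank, with basis the monomials $y_1^{c_1}\cdots y_{n-1}^{c_{n-1}}$, $0\le c_i<a_i$, hence a Noetherian $R$-module. The ascending chain of $R$-submodules $\lt(A_k)$ must therefore stabilize, at some stage $N$. To conclude the algorithm itself halts, I would interpret SAGBI/Divide-by-$x$ so that subduction and division by $x$ are alternated until no further reduction against the current basis is possible; under this interpretation an adjunction strictly enlarges the lead term algebra (since the lead monomial $y^E$ of the adjoined element is necessarily new to $\lt(A_k)$), so after stage $N$ no further adjunctions can occur. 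Pinning down this termination step precisely is what I expect to require the most care.

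Finally, for (iii), I would use the $h_i$ to show $\field[V]$ is a finitely generated module over the subring $B:=\field[x,h_1,\ldots,h_{n-1}]\subseteq A_N$. The lead monomials $x,y_1^{a_1},\ldots,y_{n-1}^{a_{n-1}}$ are pairwise coprime, so by the product criterion $\{x,h_1,\ldots,h_{n-1}\}$ is a Gr\"obner basis of the ideal it generates in $\field[V]$, and this ideal has initial ideal $(x,y_1^{a_1},\ldots,y_{n-1}^{a_{n-1}})$. The quotient $\field[V]/(x,h_1,\ldots,h_{n-1})$ is then spanned over $\field$ by the finitely many standard monomials $y_1^{c_1}\cdots y_{n-1}^{c_{n-1}}$ with $c_i<a_i$. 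Graded Nakayama yields that $\field[V]$ is a finite $B$-module, hence integral over $B$ and a fortiori over $A_N$; in particular $\field[V]^G$ is integral over $A_N$. All three hypotheses of Theorem~\ref{genthm} are then in place: $\mathcal{B}_N$ is a SAGBI basis for $A_N$ by termination of the SAGBI loop, $A_N[x^{-1}]=\field[V]^G[x^{-1}]$ by (i), and $\field[V]^G$ is integral over $A_N$ by the preceding sentence. Hence $A_N=\field[V]^G$ and $\mathcal{B}_N$ is a SAGBI basis for $\field[V]^G$, as required.
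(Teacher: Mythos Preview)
Your proposal is correct and follows the same route as the paper: stabilise an ascending chain of lead-term algebras via Noetherianness over $R=\field[x,y_1^{a_1},\ldots,y_{n-1}^{a_{n-1}}]$, then invoke Theorem~\ref{genthm}. Your parts (i) and (iii) are in fact more explicit than the paper, which simply asserts $A_j[x^{-1}]=A[x^{-1}]=\field[V]^G[x^{-1}]$ and that $\{x,h_1,\ldots,h_{n-1}\}$ is a homogeneous system of parameters.

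One technical point in (ii) deserves attention: you track the chain $\lt(A_k)$, but the claim that an adjunction \emph{strictly} enlarges $\lt(A_k)$ is not justified, since $y^E$ could in principle already be the lead monomial of some element of $A_k$ (via cancellation) without being a product of lead monomials of $\mathcal{B}_k$. The paper sidesteps this by tracking the smaller chain $B_k$, the algebra generated by the lead monomials of elements of $\mathcal{B}_k$. An adjunction strictly enlarges $B_k$ because the subduction halts precisely when $x^my^E$ is not a product of elements of $\lm(\mathcal{B}_k)$, and since $x\in\lm(\mathcal{B}_k)$ the same holds for $y^E$. The chain $\{B_k\}$ stabilises by the identical Noetherian argument you give, and stabilisation then forces termination. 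With this substitution your argument goes through cleanly --- and indeed you anticipated that this step would need the most care.
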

\begin{proof}
Let $H$ denote the lead term algebra of $\field[x,h_1,\ldots,h_{n-1}]$, i.e.,
$H=\field[x,y_1^{a_1},\ldots,y_{n-1}^{a_{n-1}}]$. Then $\field[V]$ is a finite module over the Noetherian ring $H$.
Let $B_i$ denote the algebra generated by the lead terms of the elements of $\mathcal{B}_i$. Thus
$B_0\subseteq B_1\subseteq B_2 \subseteq \cdots$ is an ascending chain of $H$-modules in the
Noetherian $H$-module $\field[V]$. Hence this sequence terminates, say with $B_j$. Thus we have a SAGBI basis
$\mathcal{B}_j$ for the algebra $A_j$. Since $\{x,h_1,\ldots,h_{n-1}\}\subset A\subseteq A_j$ is a homogeneous
system of parameters for $\field[V]^G$ and  $A_j[x^{-1}]=A[x^{-1}]=\field[V]^G[x^{-1}]$,
the hypotheses of
Theorem~\ref{genthm} are met and $\mathcal{B}_j$ is a SAGBI basis for $\field[V]^G$.
\end{proof}

\section{Two dimensional representations}\label{2drep_sec}

Let $\field$ denote a field of characteristic $p$ and consider a finite subgroup $W$ of the additive group
$(\field,+)$. The order of $W$ is $p^r$ for some non-negative integer $r$ and $(W,+)$ is isomorphic to
the elementary abelian $p$-group $((\bz/p)^r,+)$.
Choosing an isomorphism from $(\bz/p)^r$ to $W$ is equivalent to choosing a basis for $W$ as a
vector space over the finite field $\fp$. Define a group homomorphism $\rho:\field \to GL_2(\field)$ by
$$\rho(c):=\left[ \begin{array}{cc} 1&c\\ 0&1 \end{array}\right].$$
The restriction of $\rho$ to $W$ gives a representation of $W$.

\begin{Prop} Two subgroups of $GL_2(\field)$ of the form $\rho(W)$ and $\rho(W')$ are conjugate
if and only if $W'=\alpha W$ for some $\alpha\in \field^*$.
\end{Prop}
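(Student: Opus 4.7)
The plan is to prove both implications by direct matrix computation, using the fact that every non-identity element of $\rho(W)$ is a transvection with the same one-dimensional fixed subspace.

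For the ``if'' direction, I would suppose $W'=\alpha W$ for some $\alpha\in\field^*$ and simply verify that conjugation by the diagonal matrix
$D_\alpha=\left(\begin{smallmatrix}\alpha&0\\0&1\end{smallmatrix}\right)$
takes $\rho(c)$ to $\rho(\alpha c)$. Hence $D_\alpha\rho(W)D_\alpha^{-1}=\rho(\alpha W)=\rho(W')$, which settles this half.

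For the converse, I would first dispose of the trivial case $W=0$ (in which $\rho(W)=\{I\}$ is conjugate to $\rho(W')$ only if $W'=0$, and then $W'=\alpha W$ for any $\alpha$). Assuming $W\neq 0$, the key observation is that every $\rho(c)$ with $c\neq 0$ is a transvection whose $1$-eigenspace is precisely $\text{span}\{e_1\}$, so $\text{span}\{e_1\}$ is the unique common fixed line of $\rho(W)$, and similarly for $\rho(W')$. Any $g\in GL_2(\field)$ with $g\rho(W)g^{-1}=\rho(W')$ must therefore send $\text{span}\{e_1\}$ to itself, forcing $g$ to be upper triangular, say $g=\left(\begin{smallmatrix}\beta&a\\0&d\end{smallmatrix}\right)$ with $\beta,d\in\field^*$. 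A direct calculation then shows $g\rho(c)g^{-1}=\rho(\beta c/d)$ (the parameter $a$ drops out), so setting $\alpha=\beta/d$ gives $\rho(W')=\rho(\alpha W)$, and hence $W'=\alpha W$.

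There is no real obstacle: the proposition reduces to the computation of the normalizer of a unipotent subgroup in $GL_2$. The only point requiring care is the observation that all non-identity elements of $\rho(W)$ share the same $1$-eigenline, which is what forces the conjugating element to be upper triangular and makes the scalar $\alpha$ emerge naturally from the diagonal entries.
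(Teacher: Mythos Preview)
Your proposal is correct and follows essentially the same approach as the paper: both arguments observe that a conjugating matrix must be upper triangular (you phrase this in terms of the common $1$-eigenline, the paper in terms of the common socle, which is the same observation), and then both compute directly that conjugation by $\left(\begin{smallmatrix}m_{11}&m_{12}\\0&m_{22}\end{smallmatrix}\right)$ sends $\rho(c)$ to $\rho(\alpha c)$ with $\alpha=m_{11}/m_{22}$. Your treatment is slightly more explicit in separating out the ``if'' direction and handling the trivial case $W=0$, but the substance is identical.
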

\begin{proof}
Since $\rho(W)$ and $\rho(W')$ have the same socle, only an upper-triangular matrix can conjugate
$\rho(W)$ to $\rho(W')$. For an invertible matrix
$$M=
\left[ \begin{array}{cc} m_{11}&m_{12}\\0&m_{22}  \end{array}\right],$$
define $\alpha:=\frac{m_{11}}{m_{22}}\in\field^*$.
Then $M\rho(c)M^{-1}=\rho(\alpha c)$ and $W'=\alpha W$.
\end{proof}

Every faithful two dimensional $\field$-representation of $(\bz/p)^r$ is equivalent
to $\rho\circ\varphi$ for some group monomorphism $\varphi:(\bz/p)^r\to\field$ and therefore has image
$\rho(W)$ for some finite subgroup $W$. Two such representations, say $\rho\circ \varphi$ and
$\rho\circ\varphi'$,
are equivalent if and only if $\varphi'=\alpha\varphi$ for some $\alpha\in \field^*$.

Let $V_2$ denote the natural left $GL_2(\field)$-module, i.e., the space of two dimensional column vectors.
Identify the right $GL_2(\field)$-module $V_2^*$ with the space of two dimensional row vectors.
Taking $x=[0\; 1]$ and $y=[1\; 0]$, we have $x\rho(c)=x$ and $y\rho(c)=y+cx$. This action extends
to a right action of $(\field,+)$ on the polynomial algebra $\field[x,y]=\field[V_2]$ by degree preserving
algebra automorphisms. For any finite subgroup $(W,+)$ of $(\field,+)$ we denote the ring of invariants
of the restriction of $\rho$ to $W$ by $\field[V_2]^W$. Clearly $x\in\field[V_2]^W$.
A second invariant is given by the orbit product
$$N_W(y):=\prod_{c\in W}y\rho(c)=\prod_{c\in W}(y+cx).$$
Using the graded reverse lexicographic order with $x<y$, the leading term of $N_W(y)$ is $y^{p^r}$.
Thus $\{x, N_W(y)\}$ is a homogeneous system of parameters with $\deg(x)\deg(N_W(y))=|W|$.
Hence  applying \cite[3.7.5]{DK} gives the following proposition.

\begin{Prop} $\field[V_2]^W=\field[x,N_W(y)]$.
\end{Prop}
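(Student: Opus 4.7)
The plan is to apply \cite[3.7.5]{DK}: if a finite group $G$ acts on an $n$-dimensional vector space $V$ and $\field[V]^G$ contains a homogeneous system of parameters $f_1,\ldots,f_n$ with $\prod_i \deg(f_i) = |G|$, then $\field[V]^G = \field[f_1,\ldots,f_n]$, which is forced to be a polynomial algebra. The paragraph preceding the proposition has already handed us the two candidates $x$ and $N_W(y)$, identified $\lm(N_W(y)) = y^{p^r}$ in the grevlex order with $x < y$, and recorded the degree product $1 \cdot p^r = |W|$. What remains is bookkeeping: verify invariance, verify HSOP, and cite.

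First I would confirm invariance. For $x$ this is built into the choice of basis via $x\rho(c) = x$. For $N_W(y)$, given any $d \in W$ the map $c \mapsto c + d$ permutes $W$, so $N_W(y)\rho(d) = \prod_{c \in W}(y + (c+d)x) = N_W(y)$. Next I would check that $\{x, N_W(y)\}$ is genuinely a homogeneous system of parameters. Because $N_W(y)$ is monic of degree $p^r$ in $y$ over $\field[x]$, the variable $y$ is integral over $\field[x, N_W(y)]$, so $\field[V_2] = \field[x,y]$ is a finite module over $\field[x, N_W(y)]$. A dimension count then gives $\dim\field[x, N_W(y)] = 2 = \dim\field[V_2]$, so $x$ and $N_W(y)$ are algebraically independent and constitute an HSOP.

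With both hypotheses of \cite[3.7.5]{DK} in place, the proposition follows at once. There is no genuine obstacle; the only step requiring a moment's care is the HSOP verification by the integrality argument. The deeper reason everything fits together is the special feature of additive subgroups of $(\field,+)$ that the orbit product $N_W(y)$ has leading term exactly $y^{|W|}$, so that the degree product identity holds on the nose and the cited theorem applies without any room to spare.
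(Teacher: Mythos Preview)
Your proof is correct and follows exactly the paper's approach: verify that $\{x,N_W(y)\}$ is a homogeneous system of parameters with degree product $|W|$ and invoke \cite[3.7.5]{DK}. The only difference is that you justify the HSOP property via an integrality argument, whereas the paper simply reads it off from $\lm(N_W(y))=y^{p^r}$; both are fine.
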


Define
$$F_W(t):=\prod_{c\in W}(t-c) \in \field[t].$$
Thus $F_W$ is the monic polynomial whose roots are the elements of $W$.
Note that for $\alpha\in \field^*$ we have $F_{\alpha W}(t)=\alpha^{|W|}F_W(t/\alpha)$.
Furthermore, $N_W(y)= x^{|W|}F_W(y/x)$ and $N_{\alpha W}(y)=(\alpha x)^{|W|}F_W(y/(\alpha x))$.
Therefore the same monomials appear with non-zero coefficients in both $N_W(y)$ and $N_{\alpha W}(y)$.

Choose an $\field_p$-vector space basis $\{c_1,\ldots,c_r\}$ for $W\subseteq \field$.
Since $\field$ is an $\fp$-algebra, taking $\psi(x_i)=c_i$
defines an $\fp$-algebra map $\psi:\fp[x_1,\ldots,x_r]\to \field$.
A change of basis for $W$ corresponds to the action of an element of $GL_r(\fp)$ on
$\fp[x_1,\ldots,x_r]$. Thus the restriction of $\psi$ to $\fp[x_1,\ldots,x_r]^{GL_r(\fp)}$ is determined
by $W$ and is independent of the choice of basis of $W$. We denote the restriction by $\psi_W$.

Let $U$ denote the $\fp$-span of $\{x_1,\ldots,x_r\}$. Recall that the
Dickson invariants, $d_1,\ldots, d_r$, may be defined as the coefficients of the polynomial
$$D(t):=\prod_{v\in U}(t-v)=t^{p^r}+\sum_{i=0}^{r-1} d_{r-i}t^{p^i}\in \fp[x_1,\ldots,x_r][t]$$
and that $\fp[x_1,\ldots,x_r]^{GL_r(\fp)}=\fp[d_1,\ldots,d_r]$(see \cite{Dickson} or \cite{Wilkerson}).
Define $d_i(W):=\psi_W(d_i)\in\field$.

\begin{theorem} $N_W(y)=y^{p^r}+\sum_{i=0}^{r-1}d_{r-i}(W)y^{p^i}x^{p^r-p^i}$.
\end{theorem}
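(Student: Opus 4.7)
The plan is to transport the identity $D(t) = t^{p^r} + \sum_{i=0}^{r-1} d_{r-i} t^{p^i}$ from $\fp[x_1,\ldots,x_r][t]$ to $\field[t]$ via $\psi_W$, and then relate the image to $N_W(y)$ through the homogenization $N_W(y) = x^{p^r} F_W(y/x)$ already recorded in the excerpt.

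First I would extend $\psi_W$ to an $\fp$-algebra map $\widetilde{\psi}_W : \fp[x_1,\ldots,x_r][t] \to \field[t]$ by sending $t$ to $t$. Because $\{c_1,\ldots,c_r\}$ is an $\fp$-basis of $W$ and $\psi$ is $\fp$-linear, the restriction of $\psi$ to $U = \Span_{\fp}\{x_1,\ldots,x_r\}$ is an $\fp$-linear isomorphism $U \to W$. Applying $\widetilde{\psi}_W$ to the product formula for $D(t)$ therefore gives
\[
\widetilde{\psi}_W(D(t)) = \prod_{v \in U}(t - \psi(v)) = \prod_{c \in W}(t - c) = F_W(t).
\]
Since $D(t)$ has coefficients in $\fp[x_1,\ldots,x_r]^{GL_r(\fp)} = \fp[d_1,\ldots,d_r]$, the map $\widetilde{\psi}_W$ sends each $d_i$ to $d_i(W)$ by definition, yielding
\[
F_W(t) = t^{p^r} + \sum_{i=0}^{r-1} d_{r-i}(W)\, t^{p^i}.
\]

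Next I would relate $N_W$ and $F_W$. Since $W$ is an additive subgroup, the map $c \mapsto -c$ permutes $W$, so
\[
N_W(y) = \prod_{c \in W}(y + cx) = \prod_{c \in W}(y - cx) = x^{p^r} \prod_{c \in W}\bigl(y/x - c\bigr) = x^{p^r} F_W(y/x),
\]
which is exactly the identity noted earlier for $N_W$. Substituting the formula for $F_W(t)$ with $t = y/x$ and multiplying through by $x^{p^r}$ gives
\[
N_W(y) = y^{p^r} + \sum_{i=0}^{r-1} d_{r-i}(W)\, y^{p^i} x^{p^r - p^i},
\]
as required.

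The calculation is essentially bookkeeping, so I do not expect a genuine obstacle; the one point that deserves care is verifying that $\widetilde{\psi}_W$ is well-defined on the coefficients of $D(t)$ in a basis-independent way. This is handled by the remark preceding the theorem, namely that $\psi_W$ is the well-defined restriction of $\psi$ to $\fp[x_1,\ldots,x_r]^{GL_r(\fp)} = \fp[d_1,\ldots,d_r]$, which is precisely the subring in which the coefficients of $D(t)$ live.
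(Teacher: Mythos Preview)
Your argument is correct and is essentially the paper's own proof, just written out in more detail: the paper also applies $\psi$ to $D(t)$ to obtain $F_W(t)$ and then uses the homogenization $N_W(y)=x^{p^r}F_W(y/x)$. One small notational slip: the extension you call $\widetilde{\psi}_W$ should really extend $\psi$ (defined on all of $\fp[x_1,\ldots,x_r]$), not $\psi_W$, since you apply it to the factors $t-v$ with $v\in U$; this does not affect the substance of the argument.
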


\begin{proof}
Applying $\psi$ to the polynomial $D(t)$ gives $F_W(t)$. Hence
\begin{eqnarray*}
N_W(y)&=&x^{p^r}\psi(D(y/x))=x^{p^r}\left((y/x)^{p^r}+\sum_{i=0}^{r-1} d_{r-i}(W)(y/x)^{p^i}\right)\\
     &=& y^{p^r}+\sum_{i=0}^{r-1}d_{r-i}(W)y^{p^i}x^{p^r-p^i}
\end{eqnarray*}
as required.
\end{proof}

\begin{remark}
For $\field=\fp(x_1,\ldots,x_r)$ and $W={\rm Span}_{\fp}\{x_1,\ldots,x_r\}$, the map $\psi$ is injective
and $d_i(W)\not=0$ for $i=1,\ldots,r$. However, if $W=\field_{p^r}\subseteq \field$, then
$F_W(t)=t^{p^r}-t$, giving $d_r(W)=-1$ and $d_i(W)=0$ for $1\leq i<r$.
\end{remark}

A two dimensional  representation of $(\zp)^r$ is given by a point $(c_1,\ldots,c_r)\in\field^r$:
$(\zp)^r=\langle e_1,\ldots,e_r\rangle$ and $e_i\mapsto c_i$.
The representation is faithful if $\dim_{\fp}({\rm Span}_{\fp}\{c_1,\ldots,c_r\})=r$.
Two points $v,v'\in\field^r$ give equivalent representations
if and only if $v'=\alpha v$ for some $\alpha\in\field^*$. Thus equivalence classes of
non-trivial representations are parameterised
by points in projective space $P(\field^r)$. The action of the automorphism group ${\rm Aut}((\zp)^r)=GL_r(\fp)$
corresponds to changing generators for $(\zp)^r$ and permutes representations while preserving the ring of invariants. Thus
the rings of invariants for non-trivial representations are parameterised by the projective variety associated
to $\field^r\rightquot GL_r(\fp)$.  The coordinate ring of this variety is $\field[x_1,\ldots,x_r]^{GL_r(\fp)}=\field[d_1,\ldots,d_r]$ and thus it is just a weighted projective space of dimension $r-1$ (with weights $p^r-p^i$ for $i=0,1,\dots,r-1$).
The representation given by $v\in\field^r$ is faithful if and only if $d_r(v)\not=0$.
To see this, observe that $d_r$ is the product of all non-trivial $\fp$-linear combinations of
$\{x_1,\ldots,x_r\}$.

\section{Symmetric square representations}\label{ss_sec}

In this section we assume $p>2$.
Let $V_3$ denote the representation of $(\field, +)$ dual to the symmetric square
of $V_2^*$. Since $x^2\rho(c)=x^2$, $(xy)\rho(c)=xy + cx^2$ and $y^2\rho(c)=(y+cx)^2$,
$V_3$ is the left $(\field,+)$-module given by
$$\rho_2(c):=\left[ \begin{array}{ccc} 1&2c& c^2\\0&1&c \\0&0&1 \end{array}\right].$$
Taking $x=[0\;0\; 1]$, $y=[0\; 1\; 0]$, and $z=[1\;0\;0]$, we have $x\rho_2(c)=x$, $y\rho_2(c)=y+cx$, and
$z\rho_2(c)=z+2cy+c^2x$. Note that ``multiplication by $x$'' embeds $V_2^*$ as a submodule of $V_3^*$,
justifying our convention of using $x$ and $y$ to denote elements of both $V_2^*$ and $V_3^*$.
Dual to this embedding, we have the $(\field,+)$-module surjection
$V_3\to V_3/V_3^{\field}\cong V_2$. The action of $(\field,+)$ on $V_3^*$ extends
to a right action on the polynomial algebra $\field[x,y,z]=\field[V_3]$ by degree preserving
algebra automorphisms. For any finite subgroup $(W,+)$ of $(\field,+)$ we denote the ring of invariants
of the restriction of $\rho_2$ to $W$ by $\field[V_3]^W$. It is clear that $x\in \field[V_3]^W$.
A simple calculation shows that $$\delta:=y^2-xz\in\field[V_3]^W.$$ Since
$$\prod_{c\in W}y\rho_2(c)=\prod_{c\in W}(y+cx)=N_W(y),$$
we also have $N_W(y)\in \field[V_3]^W$. Define
$$N_W(z):=\prod_{c\in W}z\rho_2(c)=\prod_{c\in W}(z+2cy+c^2x).$$
The rest of this section is devoted to the proof that $\field[V_3]^W$ is generated
by $\mathcal{G}:=\{x, \delta, N_W(y), N_W(z)\}$.
We will use the graded reverse lexicographic order with $z>y>x$ and show that $\mathcal{G}$ is a SAGBI basis for
$\field[V_3]^W$ with respect to this order.
With this order we have
$\lt(\delta)=y^2$, $\lt(N_W(y)) = y^{p^r}$ and $\lt(N_W(z))=z^{p^r}$.  Thus there is a single
non-trivial \tat\ among the elements of $\mathcal{G}$:
$$\delta^{p^r}-N_W(y)^2= \left(y^{2p^r}-(xz)^{p^r}\right)-\left( y^{p^r}+\sum_{i=0}^{r-1}d_{r-i}(W)y^{p^i}x^{p^r-p^i}\right)^2.$$
Note that
\begin{eqnarray*}
N_W(y)^2-y^{2p^r}
&=&2\sum_{0\leq i<j\leq r}d_{r-i}(W)d_{r-j}(W)y^{p^i+p^j}x^{2p^r-p^i-p^j}\\
&+&\sum_{i=0}^{r-1}d_{r-i}(W)^2y^{2p^i}x^{2(p^r-p^i)}
\end{eqnarray*}
is a polynomial in $x$ and $y^2$. Define $H_W(s,t)\in \field[s,t]$ so that
$H_W(x,y^2)=N_W(y)^2-y^{2p^r}$.

\begin{lemma} $\delta^{p^r}-N_W(y)^2+x^{p^r}N_W(z)+H_W(x,\delta)=0$.
\label{rel_lem}
\end{lemma}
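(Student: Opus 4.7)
The plan is to reduce the lemma to a one-variable polynomial identity and then specialize. Introduce
\[
R(X,T) := \prod_{c\in W}(T - c^2 X^2) \in \field[X,T].
\]
Pairing $c\in W$ with $-c\in W$, the identity $(Y-cX)(Y+cX) = Y^2 - c^2 X^2$ together with $\prod_c(Y - cX) = \prod_c(Y + cX) = N_W(Y)$ (using $-W=W$) gives $R(x,y^2) = N_W(y)^2$. Comparing with the definition $H_W(x,y^2) = N_W(y)^2 - y^{2p^r}$ yields $R(X,T) = T^{p^r} + H_W(X,T)$, so the lemma is equivalent to
\[
R(x,\delta) = N_W(y)^2 - x^{p^r} N_W(z).
\]

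The heart of the matter is the polynomial identity
\[
\prod_{c\in W}\bigl(T - (y+cx)^2\bigr) = R(x,T) - N_W(y)^2
\]
in $T$ over $\field[x,y]$. To prove it I introduce a fresh indeterminate $s$ and substitute $T = s^2$. Each factor splits as $s^2 - (y+cx)^2 = (s-y-cx)(s+y+cx)$, and after reindexing $c\mapsto -c$ the two products over $c$ become $N_W(s-y)$ and $N_W(s+y)$. The crucial input is that the explicit formula $N_W(T) = T^{p^r} + \sum_{i=0}^{r-1} d_{r-i}(W)\,T^{p^i}\,x^{p^r - p^i}$ makes $N_W$ an \emph{additive} polynomial in its first argument in characteristic $p$, so $N_W(s\pm y) = N_W(s) \pm N_W(y)$. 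The product then collapses to $(N_W(s) - N_W(y))(N_W(s) + N_W(y)) = N_W(s)^2 - N_W(y)^2$. The same pairing as before yields $R(x,s^2) = \prod_c(s-cx)(s+cx) = N_W(s)^2$. Since both sides of the displayed identity are polynomials in $s^2$, equality in $s$ transfers to equality in $T$.

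To finish, specialize $T = \delta$. The direct calculation $(y+cx)^2 - \delta = y^2 + 2cxy + c^2 x^2 - (y^2 - xz) = x(z + 2cy + c^2 x)$ shows $\prod_c\bigl((y+cx)^2 - \delta\bigr) = x^{p^r} N_W(z)$. Since $p$ is odd, $\prod_c(\delta - (y+cx)^2) = -x^{p^r} N_W(z)$, and setting this equal to $R(x,\delta) - N_W(y)^2 = \delta^{p^r} + H_W(x,\delta) - N_W(y)^2$ gives the lemma after rearrangement. The only place where $p > 2$ is essential is this sign $(-1)^{p^r} = -1$; otherwise the argument is formal. I anticipate no serious obstacle: the substitution $T=s^2$ is legitimate because $\field[x,y][T] \cong \field[x,y][s^2] \subset \field[x,y][s]$, and the additivity of $N_W$ is a direct consequence of the additivity of Frobenius in characteristic $p$.
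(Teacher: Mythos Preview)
Your proof is correct, but it takes a genuinely different route from the paper's.

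The paper sets $F:=\delta^{p^r}-N_W(y)^2+H_W(x,\delta)$, observes that $F\equiv 0\pmod{z}$ (since $\delta\equiv y^2$ modulo $z$ and $H_W(x,y^2)=N_W(y)^2-y^{2p^r}$ by definition), then uses the $W$-invariance of $F$ to conclude that every element of the $W$-orbit of $z$ divides $F$, hence $N_W(z)\mid F$. A comparison of the coefficient of $z^{p^r}$ then forces $F=-x^{p^r}N_W(z)$. This is a short divisibility-plus-invariance argument.

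Your argument instead exploits the additive (linearized) structure of $N_W$ directly: the auxiliary polynomial $R(X,T)=\prod_{c}(T-c^2X^2)$ and the substitution $T=s^2$ let you factor $\prod_c(T-(y+cx)^2)$ as $N_W(s-y)N_W(s+y)=N_W(s)^2-N_W(y)^2=R(x,s^2)-N_W(y)^2$, and specializing $T=\delta$ gives the relation via the elementary identity $(y+cx)^2-\delta = x(z+2cy+c^2x)$. This is more computational but entirely self-contained: it never invokes the $W$-invariance of any intermediate expression, nor any coprimality of orbit elements, relying only on the Frobenius additivity of $N_W$ and the pairing $c\leftrightarrow -c$. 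Both arguments hinge on $p>2$, the paper implicitly (the orbit of $z$ has size $p^r$) and yours explicitly in the sign $(-1)^{p^r}=-1$.
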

\begin{proof}
Consider $F:=\delta^{p^r}-N_W(y)^2+H_W(x,\delta)$.
Note that the coefficient of $z^{p^r}$ in $F$ is $-x^{p^r}$.
Also, working modulo the ideal
in $\field[V_3]$ generated by $z$ we have $F\equiv_{(z)} y^{2p^r}-N_W(y)^2+H_W(x,y^2)=0$.
Thus $z$ divides $F$. However, since $F\in\field[V_3]^W$, each element in the $W$-orbit
of $z$ divides $F$. By definition, $N_W(z)$ is the product of the elements in the $W$-orbit of $z$.
Also, observe that if $\alpha$ and $\beta$ are elements in the $W$-orbit of $z$, then $\alpha$ divides $\beta$
if an only if $\alpha=\beta$. Therefore $N_W(z)$ divides $F$.
Since  $-x^{p^r}$ is the coefficient of $z^{p^r}$ in $F$, we must have $F=-x^{p^r}N_W(z)$, as required.
\end{proof}

Let $A$ denote the subalgebra of $\field[V_3]^W$ generated by $\mathcal{G}$. Using the relation given by the lemma,
the sole non-trivial \tat\ subducts to $0$, giving the following.

\begin{theorem} $\mathcal{G}$ is SAGBI basis for $A$.
\end{theorem}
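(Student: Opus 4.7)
The plan is to verify the defining criterion for a SAGBI basis directly: every non-trivial \tat\ among elements of $\mathcal{G}=\{x,\delta,N_W(y),N_W(z)\}$ subducts to zero. The lead monomials are $x$, $y^2$, $y^{p^r}$, and $z^{p^r}$ respectively, so a product $x^a\delta^b N_W(y)^c N_W(z)^d$ has lead monomial $x^ay^{2b+p^rc}z^{p^rd}$. For two such products with disjoint supports to share a lead monomial, the $x$- and $z$-exponents must vanish on both sides (since $x$ and $N_W(z)$ are the only generators whose lead monomials involve $x$ and $z$), and disjointness in $\{\delta,N_W(y)\}$ then forces a pure power of $\delta$ opposite a pure power of $N_W(y)$. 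The matching condition $2b=p^rc$, together with $\gcd(2,p^r)=1$ (using $p>2$), singles out the minimal solution $(b,c)=(p^r,2)$. Thus the only primitive non-trivial \tat\ is $(\delta^{p^r},N_W(y)^2)$, both with lead monomial $y^{2p^r}$.

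To subduct this \tat\ I would invoke Lemma~\ref{rel_lem} directly: it supplies the explicit identity
\[
\delta^{p^r}-N_W(y)^2=-x^{p^r}N_W(z)-H_W(x,\delta),
\]
which already exhibits the \tat\ polynomial as a polynomial in the elements of $\mathcal{G}$. It then suffices to check that every generator monomial appearing on the right has lead monomial strictly below $y^{2p^r}$ in the grevlex order with $z>y>x$, for then subduction removes these terms one by one (largest lead monomial first) and terminates at zero.

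For the term $x^{p^r}N_W(z)$, the lead monomial $x^{p^r}z^{p^r}$ has the same total degree $2p^r$ as $y^{2p^r}$ but strictly positive $x$-exponent, so it lies below $y^{2p^r}$. For $H_W(x,\delta)=\sum a_{ij}x^i\delta^j$, the defining identity $H_W(x,y^2)=N_W(y)^2-y^{2p^r}$ has the leading pure-$y$ term removed, so every monomial of $H_W(s,t)$ has $s$-degree at least one; consequently each $x^i\delta^j$ appearing satisfies $i\geq 1$ and $i+2j=2p^r$, making its lead monomial $x^iy^{2j}$ again strictly below $y^{2p^r}$. Composite \tat s $(\delta^{p^rk},N_W(y)^{2k})$ for $k>1$ reduce to the minimal case by the standard argument that subduction to zero of primitive \tat s implies the same for arbitrary ones. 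The one subtle point—the nearest thing to a real obstacle—is precisely this lead-monomial check for $H_W(x,\delta)$, which hinges on observing that every monomial of $H_W(s,t)$ carries a positive power of $s$.
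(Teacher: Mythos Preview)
Your proposal is correct and follows exactly the paper's approach: the paper simply notes before the theorem that $(\delta^{p^r},N_W(y)^2)$ is the sole non-trivial \tat\ and then states in one line that Lemma~\ref{rel_lem} makes it subduct to zero. Your write-up just fills in the details the paper leaves implicit---why this is the only primitive \tat, and why each term of $-x^{p^r}N_W(z)-H_W(x,\delta)$ has lead monomial strictly below $y^{2p^r}$---but the argument is the same.
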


\begin{theorem}\label{spthm} $\field[V_3]^W$ is the hypersurface generated by $\{x, \delta, N_W(y), N_W(z)\}$
subject to the relation $\delta^{p^r}-N_W(y)^2+x^{p^r}N_W(z)+H_W(x,\delta)=0$. Furthermore,
this generating set is a SAGBI with respect to the graded reverse lexicographic order with $z>y>x$.
\end{theorem}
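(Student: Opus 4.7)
The plan is to apply Theorem~\ref{genthm} to the subalgebra $A \subset \field[V_3]^W$ generated by $\mathcal{G}$, and then deduce the hypersurface presentation from a short dimension count. The SAGBI hypothesis has just been verified in the preceding theorem, so it remains to check that $\field[V_3]^W$ is integral over $A$ and that $A[x^{-1}] = \field[V_3]^W[x^{-1}]$. The integrality is immediate: the elements $x, N_W(y), N_W(z)$ of $\mathcal{G}$ have lead monomials $x, y^{p^r}, z^{p^r}$, which form a homogeneous system of parameters in $\field[V_3]$. Hence $\{x, N_W(y), N_W(z)\}$ is itself a homogeneous system of parameters in $\field[V_3]$, and $\field[V_3]$ (in particular $\field[V_3]^W$) is a finite module over $\field[x, N_W(y), N_W(z)] \subset A$.

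The hard part will be the localization equality. After inverting $x$, the relation $\delta = y^2 - xz$ lets me eliminate $z$ and write $\field[V_3][x^{-1}] = B[y]$ with $B := \field[x^{\pm 1}, \delta]$, where $W$ fixes $B$ and acts on $y$ by $y \mapsto y + cx$. Since $N_W(y)$ is monic of degree $p^r$ in $y$, the ring $B[y]$ is a free $B[N_W(y)]$-module of rank $p^r$. A finite Galois-theory argument at the level of fraction fields shows $\field(x, \delta, y)^W = \field(x, \delta, N_W(y))$: the left-hand side has index exactly $p^r = |W|$ in $\field(x, \delta, y)$, while $y$ already satisfies a monic polynomial of degree $p^r$ over $\field(x, \delta, N_W(y))$, forcing equality. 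Combining this with the integral closedness of the polynomial ring $B[N_W(y)]$ yields $B[y]^W = B[N_W(y)]$, so $\field[V_3]^W[x^{-1}] = \field[x^{\pm 1}, \delta, N_W(y)]$. Finally, Lemma~\ref{rel_lem} exhibits $N_W(z) = x^{-p^r}(N_W(y)^2 - \delta^{p^r} - H_W(x,\delta)) \in A[x^{-1}]$, giving the reverse inclusion. Theorem~\ref{genthm} then delivers $A = \field[V_3]^W$, and the SAGBI assertion follows from the preceding theorem.

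For the hypersurface presentation, I would introduce the abstract ring $R := \field[X, D, Y, Z]/(F)$ with $F := D^{p^r} - Y^2 + X^{p^r} Z + H_W(X,D)$ and the surjection $R \twoheadrightarrow \field[V_3]^W$ sending $X, D, Y, Z$ to $x, \delta, N_W(y), N_W(z)$, which is well defined by Lemma~\ref{rel_lem}. Viewed as a polynomial in $Z$, the defining relation $F$ is linear with leading coefficient $X^{p^r}$ and constant term $D^{p^r} - Y^2 + H_W(X,D)$. Since every term of $H_W(X,D)$ carries a positive power of $X$ while $D^{p^r}$ does not, this constant term is not divisible by $X$, and a one-line factorisation argument then shows that $F$ is irreducible in $\field[X,D,Y,Z]$. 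Thus $R$ is a three-dimensional integral domain surjecting onto the three-dimensional integral domain $\field[V_3]^W$; by the dimension formula for finitely generated $\field$-algebras the kernel is a prime ideal of height zero, hence zero, and the surjection is an isomorphism.
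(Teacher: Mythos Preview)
Your argument is correct and follows the same overall strategy as the paper: verify the three hypotheses of Theorem~\ref{genthm} (SAGBI basis, integral extension, equality after inverting $x$) and conclude $A=\field[V_3]^W$. The paper uses $\{x,\delta,N_W(z)\}$ as the homogeneous system of parameters rather than your $\{x,N_W(y),N_W(z)\}$, but either works.

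Where you differ is in the localization step. The paper observes that $\delta$ is linear in $z$ with coefficient $-x$ and invokes \cite[Theorem~2.4]{CampChuai} (together with the rank-one fact $\field(x,y)^W=\field(x,N_W(y))$) to obtain $\field(V_3)^W=\field(x,N_W(y),\delta)$ and hence $\field[V_3]^W[x^{-1}]=\field[x,N_W(y),\delta][x^{-1}]$. You instead make the change of variables $(x,y,z)\rightsquigarrow(x,y,\delta)$ directly, reducing to the action $y\mapsto y+cx$ on a genuine polynomial ring $B[y]$ over $B=\field[x^{\pm1},\delta]$, and then run a clean Galois-degree plus integral-closure argument. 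This is more self-contained, since it avoids the external citation, and is really the same idea stripped to its essentials. You also supply an explicit irreducibility-plus-dimension argument for the hypersurface presentation, which the paper's proof leaves implicit; your observation that $X\nmid D^{p^r}-Y^2+H_W(X,D)$ because every monomial of $H_W$ carries a factor of $X$ is exactly what is needed.
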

\begin{proof}
Note that $\lt(\delta)=y^2$ and $\lt(N_W(z))=z^{p^r}$. Thus $(x,\delta, N_W(z))\field[V_3]$ is a zero-dimensional ideal
and $\{x,\delta,N_W(z)\}$ is a homogeneous system of parameters.
Hence $A\subseteq\field[V_3]^W$ is an integral extension. Since $\field(x,y)^W=\field(x,N_W(y))$ and $\delta$ is degree 1 in $z$,
applying \cite[Theorem 2.4]{CampChuai}  gives $\field(V_3)^W=\field(x,N_W(y),\delta)$ (see also \cite{Kang}).
Furthermore, since the  coefficient of $z$ in $\delta$ is $-x$, we have
$\field[V_3]^W[x^{-1}]=\field[x,N_W(y),\delta][x^{-1}]=A[x^{-1}]$.
(Note that using the relation given in Lemma~ \ref{rel_lem}, we
obtain an explicit expression for $N_W(z) \in \field[x, \delta, N_W(y)][x^{-1}].)$
Applying Theorem~\ref{genthm} gives $A=\field[V_3]^W$.
\end{proof}

\begin{remark} A proof of Theorem~\ref{spthm} for the special case $W=\field_{p^r}$ is given in Section~2 of \cite{HS2}.
\end{remark}

\section{Classifying the three dimensional representations}\label{cl3d_sec}
In this section we describe all three dimensional representations of $G:=(\zp)^r$.
We first sort the non-trivial representations $V$ of $G$ by the dimensions of the factors in the socle series.

\noindent
Type $(2,1)$: $\dim_{\field}(V^G)=2$ and $\dim_{\field}((V/V^G)^G)=1$. The image of the
representation is of the form
$$\left\{
\left[ \begin{array}{ccc} 1&0&c_1 \\0&1&c_2 \\0&0&1 \end{array}\right]\;\middle\vert\; (c_1,c_2)\in U\right\}$$
for some finite subgroup $U\leq(\field^2,+)$. By \cite[Theorem~3.9.2]{CW} (or \cite{ls}),
the rings of invariants for these representations
are polynomial algebras. Note that this case includes Stong's example (see \cite[\S 8.1]{CW})
so the image is not necessarily a Nakajima group. (A Nakajima group is a subgroup of the
unipotent upper triangular matrices whose ring of invariants is generated by the orbit products
of linear forms, see \cite[Chapter~8]{CW} for details.)

\noindent
Type $(1,2)$: $\dim_{\field}(V^G)=1$ and $\dim_{\field}((V/V^G)^G)=2$.
The image of the representation is of the form
$$\left\{
\left[ \begin{array}{ccc} 1&c_1&c_2 \\0&1&0 \\0&0&1 \end{array}\right]\;\middle\vert\; (c_1,c_2)\in U\right\}$$
for some finite subgroup $U\leq(\field^2,+)$. Here $\field[V]^G=\field[x,y,N_U(z)]$
where
$$N_U(z):=\prod_{(c_1,c_2)\in U}(z+c_1y+c_2x)$$
by  \cite[3.7.5]{DK}.

\noindent
Type $(1,1,1)$: $\dim_{\field}(V^G)=1$ and $\dim_{\field}((V/V^G)^G)=1$. In this case the image of the representation
contains at least one element whose Jordan form consists of a single Jordan block; hence $p>2$.
This case includes the symmetric square
representations of Section~\ref{ss_sec}. Define a group homomorphism $\sigma:\field^2\to GL_3(\field)$ by
$$\sigma(c_1,c_2):=\left[ \begin{array}{ccc} 1&2c_1&c_1^2+c_2 \\0&1&c_1 \\0&0&1 \end{array}\right].$$

\begin{Prop}
For any representation of type $(1,1,1)$, there exists a choice of basis for which the image of the representations
 is given by $\sigma(U)$
for some finite subgroup $U\leq(\field^2,+)$.
\end{Prop}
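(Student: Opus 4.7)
The plan is to choose a basis of $V^*$ adapted to the socle filtration so the matrices of $\rho(g)$ are upper-triangular unipotent, use the commutativity of $G$ to force a proportionality between the relevant entries, and then normalize by a single diagonal conjugation (which is where $p>2$ enters) to land in $\sigma$-form.

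First, pick a basis $\{z,y,x\}$ of $V^*$ so that $x$ spans $(V^*)^G$ and $y$ lifts a generator of $(V^*/\field x)^G$; this is possible because the type $(1,1,1)$ hypothesis makes $V^*$ uniserial of length three. In this basis,
$$\rho(g)=\begin{pmatrix}1&a(g)&b(g)\\ 0&1&c(g)\\ 0&0&1\end{pmatrix},$$
and writing out $\rho(g+h)=\rho(g)\rho(h)$ shows $a,c\colon G\to\field$ are additive while $b(g+h)-b(g)-b(h)=a(g)c(h)$.

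Commutativity of $G$ now enters decisively: comparing the $(1,3)$-entries of $\rho(g)\rho(h)$ and $\rho(h)\rho(g)$ yields $a(g)c(h)=a(h)c(g)$ for all $g,h\in G$, so the image of $(a,c)\colon G\to\field^2$ lies on a single line through the origin. The hypothesis that some $\rho(g_0)$ is a full $3\times 3$ Jordan block forces $a(g_0)c(g_0)\neq 0$, so neither $a$ nor $c$ is identically zero. Therefore there is a unique $\lambda\in\field^*$ with $a=\lambda c$.

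Using $p>2$, conjugate by $D=\mathrm{diag}(2/\lambda,1,1)$. The resulting matrix has $a$-entry $2c(g)$, $c$-entry $c(g)$, and $b$-entry $\tilde b(g):=(2/\lambda)b(g)$, which satisfies $\tilde b(g+h)-\tilde b(g)-\tilde b(h)=2c(g)c(h)$. Setting $c_1:=c$ and $c_2:=\tilde b-c_1^2$, the identity $(c_1(g)+c_1(h))^2-c_1(g)^2-c_1(h)^2=2c_1(g)c_1(h)$ cancels the cocycle term and forces $c_2$ to be additive. Hence $\rho(g)=\sigma(c_1(g),c_2(g))$ and $U:=\{(c_1(g),c_2(g)):g\in G\}$ is the required finite subgroup of $(\field^2,+)$. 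All the computations are elementary $3\times 3$ manipulations, so there is no serious obstacle; the one conceptual ingredient is the commutativity step that forces $a$ and $c$ proportional, after which the assumption $p\neq 2$ is exactly what is needed to normalize the ratio to $a=2c$ by a single diagonal rescaling.
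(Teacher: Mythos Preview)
Your proof is correct. The paper's argument is a slightly more streamlined variant of the same idea: rather than deducing $a=\lambda c$ from commutativity of all pairs and then conjugating by a diagonal matrix, it fixes one element $g_0$ with $\rho(g_0)$ a full Jordan block, chooses from the outset a basis putting $\rho(g_0)$ directly into the form $\sigma(1,0)$, and then observes that the centralizer of $\sigma(1,0)$ in the upper-triangular unipotent group is exactly $\sigma(\field^2)$. Thus commutativity with this single distinguished element already forces every $\rho(g)$ into $\sigma$-form in one stroke, with no separate normalization step. Your approach makes the role of commutativity and of $p>2$ more explicit, while the paper's version is more economical because the initial Jordan-form basis choice absorbs the diagonal rescaling you perform afterwards.
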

\begin{proof}
Since the representation is of type $(1,1,1)$ there is at least one element whose Jordan form consists of a single Jordan block. Note that this element determines the socle series
of the representation. Choose a basis for $V$ so that the matrix representing this element
is $\sigma(1,0)$. With respect to this basis, the other group elements are represented by upper triangular unipotent matrices. Furthermore, these matrices must commute with
$\sigma(1,0)$. A straight forward computation shows that any upper triangular unipotent matrix which commutes with $\sigma(1,0)$ is in the image of $\sigma$.
\end{proof}

\begin{remark}
If $V$ is a decomposable representation of type $(2,1)$ then there exists a choice of basis for which the image of the representation is contained in $\{\sigma(0,c)\mid c\in\field\}$, 
the centre of the upper triangular unipotent group.
\end{remark}

\begin{Prop}
Two subgroups of $GL_3(\field)$ of the form $\sigma(U)$ and $\sigma(U')$ are conjugate if and only if
there exist
$\alpha\in \field^*$ and $\gamma\in\field$ such that
$$U'=\left\{\alpha(c_1,\gamma c_1+\alpha c_2)\mid (c_1,c_2)\in U\right\}.$$
\end{Prop}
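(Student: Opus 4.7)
The plan is to prove both directions by a direct computation, after first reducing to the case of an upper-triangular conjugator. Working in the implicit type $(1,1,1)$ context, I may assume $U$ contains some $(c_1,c_2)$ with $c_1\neq 0$. A quick calculation with $\sigma(c_1,c_2)-I$ for such an element shows that $(\field^3)^{\sigma(U)}=\Span\{e_1\}$ and that the next step of the socle series is $\Span\{e_1,e_2\}$; the same holds for $\sigma(U')$. Any matrix $M\in GL_3(\field)$ conjugating $\sigma(U)$ to $\sigma(U')$ must preserve this canonical flag, hence is upper-triangular, say
\[
M=\begin{pmatrix} a & b & c\\ 0 & d & e\\ 0 & 0 & f\end{pmatrix},\qquad a,d,f\in\field^*.
\]
Scaling $M$ by $d^{-1}$ (scalar matrices are central, so conjugation is unchanged) I may further assume $d=1$.

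For the ``only if'' direction, a direct expansion of $M\sigma(c_1,c_2)M^{-1}$ produces a unipotent upper-triangular matrix whose $(1,2)$ entry is $2ac_1$ and whose $(2,3)$ entry is $c_1/f$. Matching these with the corresponding entries $2c_1'$ and $c_1'$ of $\sigma(c_1',c_2')$, together with the existence of some $(c_1,c_2)\in U$ with $c_1\neq 0$, forces $af=1$. Setting $\alpha:=a=f^{-1}$ gives $c_1'=\alpha c_1$. Computing the $(1,3)$ entry and simplifying using $af=1$ yields
\[
c_2'=\alpha^2 c_2 + \alpha(b-2\alpha e)\,c_1,
\]
so setting $\gamma:=b-2\alpha e$ produces exactly $(c_1',c_2')=\alpha(c_1,\gamma c_1+\alpha c_2)$, as claimed.

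For the ``if'' direction, given $\alpha\in\field^*$ and $\gamma\in\field$, I take the explicit matrix $M$ with $a=\alpha$, $d=1$, $f=\alpha^{-1}$, $b=\gamma$, and $c=e=0$; the same calculation confirms $M\sigma(c_1,c_2)M^{-1}=\sigma(\alpha c_1,\alpha\gamma c_1+\alpha^2 c_2)$ for every $(c_1,c_2)\in U$, so $M$ conjugates $\sigma(U)$ onto $\sigma(U')$ for the specified $U'$. The main obstacle is simply careful bookkeeping of the $(1,3)$ entry in the conjugation product; the other entries match up by inspection, and the structural observation that $M$ must be upper triangular is what rules out spurious conjugations that would not fit the claimed formula.
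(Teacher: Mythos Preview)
Your proof is correct and follows essentially the same approach as the paper: reduce to an upper-triangular conjugator via the socle series, compare the $(1,2)$ and $(2,3)$ entries to pin down $\alpha$ (and the compatibility $af=1$, which is the paper's $m_{22}/m_{33}=\alpha$), and read off $\gamma$ from the $(1,3)$ entry. The only cosmetic differences are that you normalise $d=1$ whereas the paper works with ratios $m_{11}/m_{22}$ and $(m_{12}/m_{22})-2(m_{23}/m_{33})$, and that you treat the ``if'' direction explicitly by exhibiting a specific $M$, which the paper leaves implicit in its conjugation formula.
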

\begin{proof} Since the subgroups $\sigma(U)$ and $\sigma(U')$ have the same socle series,
only an upper triangular matrix can conjugate $\sigma(U)$ to $\sigma(U')$. For an invertible
matrix
$$M:=\left[\begin{array}{ccc} m_{11}&m_{12}&m_{13}\\0&m_{22}&m_{23}\\0&0&m_{33}  \end{array}\right],$$
define $\alpha:=m_{11}/m_{22}\in\field^*$ and $\gamma:=(m_{12}/m_{22})-2(m_{23}/m_{33})\in\field$.
The condition $M\sigma(c_1,c_2)M^{-1}\in\sigma(\field)$ forces $m_{22}/m_{33}=\alpha$ and
gives $M\sigma(c_1,c_2)M^{-1}=\sigma(\alpha c_1,\alpha^2c_2+\alpha\gamma c_1)$.
\end{proof}
We encode this conjugation as a left action of $\field\ltimes\field^*$ on $\field^2$:
$$(\gamma,\alpha)\cdot\left[\begin{array}{c} c_1\\ c_2 \end{array}\right]=
\left(\begin{array}{cc} \alpha &0\\ \alpha\gamma &\alpha^2
\end{array}\right)\left[\begin{array}{c} c_1\\ c_2 \end{array}\right],$$
where $\field^*$ acts on $\field$ by
multiplication.
A matrix $$M=\left(
\begin{array}{cccc}
c_{11}& c_{12} & \cdots & c_{1r} \\
c_{21}& c_{22} & \cdots & c_{2r}
\end{array}\right) \in \field^{2\times r}
$$
determines a three dimensional representation $V_M$ of $G=(\zp)^r=\langle e_1,\ldots,e_r\rangle$ by
$e_i\mapsto \sigma(c_{1i},c_{2i})$.
The right action of $GL_r(\fp)$ on $\field^{2\times r}$ corresponds to
changing generators for $(\zp)^r$ and permutes representations while preserving the
ring of invariants. The left action of $\field\ltimes\field^*$ on $\field^{2\times r}$
corresponds to a change of basis for $V_M$. The rings of invariants for representations of this
form are parameterised by $(\field\ltimes\field^*)$-orbits in the variety
$\field^{2\times r}\rightquot GL_r(\fp)$.
The representation $V_M$ is of type $(1,1,1)$ if $c_{1i}\not=0$ for some $i$.
If $V_M$ is of type $(1,1,1)$ and $c_{2i}=0$ for all $i$, then $V_M$ is a symmetric square representation.

We choose a basis $\{x,y,z\}$ for $V_M^*$ so that $x\sigma(c_1,c_2)=x$, $y\sigma(c_1,c_2)=y+c_1x$
and $z\sigma(c_1,c_2)=z+2c_1y+(c_1^2+c_2)x$. Let $U$ denote the $\fp$-span of the columns of $M$.
Then $\field[V_M]^G=\field[x,y,z]^{\sigma(U)}$. For any $f\in\field[x,y,z]$, let $N_G(f)$ denote
the product of the elements of $G$-orbit of $f$.

\begin{theorem}\label{ffthm}
If $\deg(N_G(y))\deg(N_G(\delta))=2|G|$, then
$$\field[V_M]^G[x^{-1}]=\field[x,N_G(y),N_G(\delta)][x^{-1}].$$
\end{theorem}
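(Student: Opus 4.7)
The plan is to localize at the invariant $x$ and introduce the coordinates $Y := y/x$ and $Z := \delta/x^2$, where $\delta := y^2 - xz$. A direct computation using the definition of $\sigma(c_1,c_2)$ gives $\delta\cdot\sigma(c_1,c_2) = \delta - c_2 x^2$, so on $\field[V_M][x^{-1}] = \field[x^{\pm 1},Y,Z]$ the group $G$ acts trivially on $x$ and by the translations $Y \mapsto Y + c_1$, $Z \mapsto Z - c_2$. Writing $U \subseteq (\field,+)^2$ for the $\fp$-span of the columns of $M$, the action of $G$ factors through $U$, and because $x$ is invariant,
$$\field[V_M]^G[x^{-1}] = \field[x^{\pm 1}]\otimes_\field \field[Y,Z]^U.$$

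The next step is degree bookkeeping. The $G$-orbit of $y$ is $\{y + c_1 x : c_1 \in \pi_1(U)\}$, so $\deg N_G(y) = |\pi_1(U)|$; similarly $\deg N_G(\delta) = 2\,|\pi_2(U)|$. The hypothesis therefore reads $|\pi_1(U)|\cdot|\pi_2(U)| = |U|$. Combining this with $|U| = |\ker\pi_i|\cdot|\pi_i(U)|$ forces $|U| = |\ker\pi_1|\cdot|\ker\pi_2|$, and since $\ker\pi_1\cap\ker\pi_2 = 0$ while $\ker\pi_1 + \ker\pi_2 \subseteq U$, we conclude $U = V_1 \times V_2$ with $V_i := \pi_i(U)$.

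Once $U$ is rectangular, $V_1$ translates only $Y$ and $V_2$ only $Z$, so $\field[Y,Z]^U = \field[N_{V_1}(Y),\,N_{V_2}(Z)]$. The identities $N_G(y) = x^{|V_1|}\,N_{V_1}(Y)$ and $N_G(\delta) = x^{2|V_2|}\,N_{V_2}(Z)$ then realise $N_{V_1}(Y)$ and $N_{V_2}(Z)$ as elements of $\field[x^{\pm 1}, N_G(y), N_G(\delta)]$, yielding the claimed equality. The only non-formal step is the passage from the numerical hypothesis to the splitting $U = V_1 \times V_2$; everything else is a routine change of variables combined with the fact that for a product translation action the invariants tensor.
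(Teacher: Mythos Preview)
Your argument follows the paper's route: pass to the variables $(x,y,\delta/x)$ so that the $G$-action becomes a pure translation action, then use the degree hypothesis to conclude. The paper keeps the graded ring $\field[x,y,\delta/x]$ and cites \cite[3.7.5]{DK} as a black box; you dehomogenise one step further to $\field[x^{\pm1}]\otimes_\field\field[Y,Z]$ and instead unpack the degree condition as the splitting $U=\pi_1(U)\times\pi_2(U)$, after which the invariants of the product translation action visibly tensor. This is a more transparent variant of the same idea.

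One slip: the hypothesis $\deg N_G(y)\cdot\deg N_G(\delta)=2|G|$ translates to $|\pi_1(U)|\cdot|\pi_2(U)|=|G|$, not $|U|$, and your splitting argument genuinely needs $|U|$ on the right-hand side. The two agree exactly when $V_M$ is faithful. The paper's invocation of \cite[3.7.5]{DK} carries the same implicit faithfulness assumption (that criterion compares the degree product to the order of the group \emph{acting}, which here is $|U|$), and every later application of the theorem in the paper is to a faithful representation, so in context neither proof is in trouble. But as literally stated the conclusion can fail: for $r=2$ and $M=\left(\begin{smallmatrix}1&0\\1&0\end{smallmatrix}\right)$ the hypothesis holds, yet $xy+\delta$ is $G$-invariant and does not lie in $\field[x,N_G(y),N_G(\delta)][x^{-1}]$.
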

\begin{proof}
Since $\delta=y^2-xz$, it is clear that $\field[x,y,z][x^{-1}]=\field[x,y,\delta/x][x^{-1}]$.
Observe that $\delta\sigma(c_1,c_2)=\delta-c_2x^2$. Thus $G$ acts on $\field[x,y,\delta/x]$
by degree preserving algebra automorphisms. By hypothesis,  $\deg(N_G(y))\deg(N_G(\delta/x))=|G|$.
Therefore, by \cite[3.7.5]{DK},
$\field[x,y,\delta/x]^G=\field[x,N_G(y), N_G(\delta/x)]$.
Thus $\field[V_M]^G[x^{-1}]
=\field[x,N_G(y), N_G(\delta/x)][x^{-1}]
=\field[x,N_G(y), N_G(\delta)][x^{-1}]$.
\end{proof}


The  matrix $M\in\field^{2\times r}$ can be used to define an evaluation map $$\psi_M:\field[x_{ij}]\to \field$$
by $\psi_M(x_{ij})=c_{ij}$. We let $\psi_U$ denote the restriction of $\psi_M$ to $\field[\field^{2\times r}]^{GL_r(\fp)}$.
We expect $\psi_U$ to play a key r\^ole in the construction of generators for $\field[V_M]^G$.

\section{The field of fractions for the generic case}\label{field_section}

Consider the ring $\fp[x_{ij}]:=\fp[x_{1j},x_{2j}\mid j=1,2,\ldots r]$
and its quotient field $\rfield:=\fp(x_{1j},x_{2j}\mid j=1,2,\ldots r)$.
In this section, we work over $\rfield$.  
The action of $G=(\zp)^r=\langle e_1,\ldots,e_r\rangle$ is  
given by $e_j\mapsto\sigma(x_{1j},x_{2j})$.
Therefore, we assume $p>2$.  
Define a $2(r+1)\times r$ matrix by
$$\Gamma:=\left[\begin{array}{cccc}
x_{11} & x_{12}& \cdots &x_{1r} \\
x_{21} & x_{22}& \cdots &x_{2r} \\
x_{11}^p & x_{12}^p& \cdots &x_{1r}^p \\
x_{21}^p & x_{22}^p& \cdots &x_{2r}^p  \\
        &        &\vdots  &         \\
x_{11}^{p^r} & x_{12}^{p^r}& \cdots &x_{1r}^{p^r} \\
x_{21}^{p^r} & x_{22}^{p^r}& \cdots &x_{2r}^{p^r}
 \end{array}
\right].$$

For a subsequence $I=(i_1,\ldots,i_r)$ of $(1,2,\ldots,2r+2)$, let $\gamma_I\in \rfield$ denote
the associated $r\times r$ minor of $\Gamma$. Note that $\gamma_I$ is invariant under the natural right action
of $SL_r(\fp)$. Form a $2(r+1)\times (r+1)$ matrix $\widetilde{\Gamma}$ by augmenting $\Gamma$ with the column
$${\bf v}=[y/x, -\delta/x^2, (y/x)^p, (-\delta/x^2)^p, \ldots, (y/x)^{p^r},  (-\delta/x^2)^{p^r}]^T.$$
For a subsequence $J=(j_1,\ldots,j_{r+1})$ of $(1,2,\ldots,2r+2)$, let $\tilde{f}_J\in\rfield[x,y,z][x^{-1}]$
denote the associated $(r+1)\times (r+1)$ minor of $\widetilde{\Gamma}$. Let $f_J$ denote the element of
$\rfield[x,y,z]$ constructed by minimally clearing the denominator of $\tilde{f}_J$.
Observe that 
$y\Delta_j=xx_{1j}$ and $\delta\Delta_j =-x^2x_{2j}$, where $\Delta_j:=e_j-1\in\rfield G$. Therefore
$${\bf v}\Delta_j=[\Gamma_{1j},\Gamma_{2j},\ldots, \Gamma_{(2r+1)j}]^T,$$
the $j^\text{th}$ column of $\Gamma$.
Thus $\tilde{f}_J\in \rfield[x,y,z]^G[x^{-1}]$ and $f_J\in\rfield[x,y,z]^G$.

\begin{lemma}\label{biggen} $\rfield[x,y,z]^G[x^{-1}]
=\rfield[x,f_{(1,3,5,\ldots, 2r+1)},f_{(1,2,3,5,\ldots,2r-1)}][x^{-1}]$.
\end{lemma}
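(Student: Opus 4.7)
The plan is to change variables to $u:=y/x$ and $v:=-\delta/x^2$, which converts the $G$-action on the localised ring to a pure translation. Since $\delta=y^2-xz$, we have $z=(y^2-\delta)/x$, so $\rfield[x,y,z][x^{-1}]=\rfield[x^{\pm 1},u,v]$. Using the formulae $y\Delta_j=xx_{1j}$ and $\delta\Delta_j=-x^2x_{2j}$ already recorded in the text, one checks that $e_j$ sends $u\mapsto u+x_{1j}$, $v\mapsto v+x_{2j}$, and fixes $x$. Thus $G$ acts trivially on $\rfield[x^{\pm 1}]$ and by translations on the $(u,v)$-plane, giving
$$\rfield[x,y,z]^G[x^{-1}]=\rfield[x^{\pm 1}]\otimes_{\rfield}\rfield[u,v]^G,$$
so the problem reduces to identifying $\rfield[u,v]^G$ in terms of the two specified minors.

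Next I would analyse the minors by cofactor expansion along the last column of $\widetilde\Gamma$. The expansion of $\tilde f_{(1,3,5,\ldots,2r+1)}$ is the $\fp$-linearised polynomial $\sum_{i=0}^r a_iu^{p^i}\in\rfield[u]$ whose leading coefficient $a_r$ is the $r\times r$ Moore determinant $M$ on the rows $x_{1\cdot},x_{1\cdot}^p,\ldots,x_{1\cdot}^{p^{r-1}}$. Because the $x_{1j}$ are $\fp$-linearly independent indeterminates, $M\in\rfield^*$, so $u$ is integral of degree $p^r$ over $\rfield[\tilde f_{(1,3,5,\ldots,2r+1)}]$. Expanding $\tilde f_{(1,2,3,5,\ldots,2r-1)}$ similarly yields $\pm Mv+Q(u)$, where the coefficient of $v$ is (up to sign) the same Moore determinant $M$ (it is the $(2,r+1)$-cofactor) and $Q\in\rfield[u]$ has $u$-degree $p^{r-1}$. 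Hence $v\in\rfield(u,\tilde f_{(1,2,3,5,\ldots,2r-1)})$, and in fact $\rfield(u,v)=\rfield(u,\tilde f_{(1,2,3,5,\ldots,2r-1)})$.

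A Galois/degree count then forces the field equality. Since the translations $(x_{1j},x_{2j})$ are $\fp$-linearly independent, $G$ acts faithfully on $\rfield(u,v)$ and $[\rfield(u,v):\rfield(u,v)^G]=p^r$. In the chain
$$\rfield(\tilde f_{(1,3,5,\ldots,2r+1)},\tilde f_{(1,2,3,5,\ldots,2r-1)})\subseteq\rfield(u,v)^G\subseteq\rfield(u,v),$$
the outer index is at most $p^r$ by the two integrality statements above (base-changing along the transcendental extension by $\tilde f_{(1,2,3,5,\ldots,2r-1)}$ preserves the degree $p^r$), and so must equal $p^r$. This forces $\rfield(u,v)^G=\rfield(\tilde f_{(1,3,5,\ldots,2r+1)},\tilde f_{(1,2,3,5,\ldots,2r-1)})$; adjoining $x$ and clearing denominators then gives $\rfield(x,y,z)^G=\rfield(x,f_{(1,3,5,\ldots,2r+1)},f_{(1,2,3,5,\ldots,2r-1)})$, and in particular these three generators are algebraically independent over $\rfield$.

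Finally, I would promote this field equality to a ring equality after inverting $x$. Set $A:=\rfield[x,f_{(1,3,5,\ldots,2r+1)},f_{(1,2,3,5,\ldots,2r-1)}][x^{-1}]$. Since the three generators are transcendental over $\rfield$, $A$ is a localisation of a polynomial ring in three variables, hence a UFD and in particular normal. The integrality of $u$ and $v$ over $A$ (note that $\tilde f_J$ and $f_J$ differ only by a power of $x$, so lie in the same localisation) lifts to integrality of the whole of $\rfield[x,y,z][x^{-1}]$, and a fortiori of $\rfield[x,y,z]^G[x^{-1}]$, over $A$. Since a normal domain equals its integral closure in any overring with the same field of fractions, we conclude $A=\rfield[x,y,z]^G[x^{-1}]$. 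The main obstacle is the cofactor bookkeeping—confirming that the same nonzero $r\times r$ Moore determinant controls the leading behaviour of both minors, and that the cofactor producing $Q(u)$ has the right $u$-degree—after which the degree count and normality close the argument.
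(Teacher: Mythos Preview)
Your argument is correct, but it proceeds by a different route than the paper's. The paper's proof is two sentences: it observes that $f_{(1,3,5,\ldots,2r+1)}$ is a scalar multiple of the orbit product $N_G(y)$ and that $f_{(1,2,3,5,\ldots,2r-1)}$ is degree one in $z$ with coefficient a scalar times a power of $x$; it then quotes the Campbell--Chuai theorem on invariant fields of triangular actions to get $\rfield(x,y,z)^G=\rfield(x,f_{(1,3,\ldots)},f_{(1,2,3,5,\ldots)})$, and the form of the $z$-coefficient immediately upgrades this to an equality of rings after inverting $x$.

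Your approach is more self-contained. The substitution $u=y/x$, $v=-\delta/x^2$ is exactly the change of variables used in the proof of Theorem~\ref{ffthm}, and it reduces the question to the invariant theory of a rank-$r$ translation group on a plane. You then extract the field equality by a direct degree count (in place of the Campbell--Chuai citation) and make the passage from fraction-field equality to localised-ring equality explicit via normality and integrality, which the paper leaves as a one-line remark. The cofactor bookkeeping you flag as the main obstacle is indeed routine: deleting the row carrying $v$ from the $(1,2,3,5,\ldots,2r-1)$ submatrix leaves precisely the rows $x_{1\cdot},x_{1\cdot}^p,\ldots,x_{1\cdot}^{p^{r-1}}$, so the same Moore determinant $M$ appears as claimed. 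What you gain is independence from the external reference; what the paper gains is brevity.
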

\begin{proof}
Since $f_{(1,3,5,\ldots, 2r+1)}$ is a scalar multiple of $N_G(y)$ and
$f_{(1,2,3,5,\ldots,2r-1)}$ is an invariant which is degree 1 in $z$,
applying \cite[Theorem 2.4]{CampChuai} shows that $\rfield(x,y,z)^G$ is generated by
$\{x,f_{(1,3,\ldots, 2r+1)},f_{(1,2,3,5,\ldots,2r-1)}\}$.
Furthermore, the coefficient of $z$ in $f_{(1,2,3,5,\ldots,2r-1)}$ is a scalar times a power of $x$.
Hence, we have a generating set once $x$ has been inverted.
\end{proof}

Observe that $\lm(f_{(1,3,\ldots, 2r+1)})=y^{p^r}$ and $\lm(f_{(1,2,3,5,\ldots,2r-1)})=y^{p^{r-1}}$.
Define $f_1:=f_{(1,2,3,\ldots, r+1)}$ and $s:=\lceil r/2 \rceil$. For $r$ odd, define
$f_2:=f_{(1,2,\ldots,r,r+2)}$ and observe that $\lm(f_1)=y^{2p^{s-1}}$ and $\lm(f_2)=y^{p^s}$.
For $r$ even, define
$$f_2:=\frac{\gamma_{(1,2,\ldots,r)}f_{(1,2,\ldots,r,r+2)}+f_1^2}{2x^{p^s-2p^{s-1}}}.$$
In this case $\lm(f_1)=y^{p^s}$ and a straight-forward calculation gives
$$\lt(f_2)=\gamma_{(1,2,\ldots,r)}\gamma_{(1,2,\ldots,r-1,r+1)}y^{p^s+2p^{s-1}}.$$
The rest of this section is devoted to the proof of the following.

\begin{theorem}\label{fieldthm}  $\rfield[x,y,z]^G[x^{-1}]=\rfield[x,f_1,f_2][x^{-1}]$.
\end{theorem}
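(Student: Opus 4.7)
The plan is to bootstrap from Lemma~\ref{biggen}, which already establishes $\rfield[x,y,z]^G[x^{-1}] = \rfield[x, f_{(1,3,\ldots,2r+1)}, f_{(1,2,3,5,\ldots,2r-1)}][x^{-1}]$. It then suffices to show that $\{x, f_1, f_2\}$ and $\{x, f_{(1,3,\ldots,2r+1)}, f_{(1,2,3,5,\ldots,2r-1)}\}$ generate the same $\rfield$-algebra after inverting $x$. One inclusion is immediate: each minor $f_J$ belongs to $\rfield[x,y,z]^G$ by construction, and the factor $2x^{p^s-2p^{s-1}}$ in the $r$-even definition of $f_2$ is a unit once $x$ is inverted (using $p>2$), so $f_1, f_2 \in \rfield[x,y,z]^G[x^{-1}]$.

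For the reverse inclusion, I plan to exploit the Frobenius ladder structure of $\widetilde\Gamma$: row $2k+1$ (respectively $2k+2$) is the entrywise $p^k$-th power of row $1$ (respectively row $2$), and this pattern extends to the augmented column. Consequently $\tilde f_J^{\,p}$ equals $\tilde f_{J+2}$ (up to clearing denominators in $x$), where $J+2$ shifts each row index by two. This gives a mechanism to manufacture minors with higher-indexed rows from $f_1$ and $f_2$ by iterated Frobenius. Combined with Laplace expansion along the final column of $\widetilde\Gamma$ — which writes each $\tilde f_J$ as a $\rfield$-linear combination of $\gamma_I$'s with coefficients drawn from $\{(y/x)^{p^k},(-\delta/x^2)^{p^k}\}$ — this should yield exchange relations letting me express the two Lemma~\ref{biggen} generators as polynomial combinations of $x$, $f_1$, $f_2$, and their Frobenius powers.

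The main obstacle will be the $r$-even case, where $f_2$ is the engineered combination $(\gamma_{(1,2,\ldots,r)} f_{(1,2,\ldots,r,r+2)} + f_1^2)/(2x^{p^s-2p^{s-1}})$, designed so that the $y^{2p^s}$ top term cancels and $f_2$ acquires the smaller leading monomial $\gamma_{(1,2,\ldots,r)}\gamma_{(1,2,\ldots,r-1,r+1)} y^{p^s+2p^{s-1}}$ recorded above. Since each $\gamma_I$ appearing here is a generically nonzero element of $\rfield$, this defining relation can be inverted to recover $f_{(1,2,\ldots,r,r+2)}$ from $x, f_1, f_2$, after which iterated Frobenius and Laplace exchange should yield the remaining target minor. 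If the combinatorial bookkeeping becomes unwieldy, a cleaner fallback is to proceed transcendence-theoretically: verify that $x, f_1, f_2$ are algebraically independent over $\rfield$, use the $y$- and $\delta$-degrees of $f_1, f_2$ to bound $[\rfield(x,y,z) : \rfield(x, f_1, f_2)] \leq p^r = |G|$, and conclude $\rfield(x, f_1, f_2) = \rfield(x,y,z)^G$ by Galois degree; the localized ring equality then follows from Lemma~\ref{biggen}.
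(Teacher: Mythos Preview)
Your primary strategy matches the paper's: start from Lemma~\ref{biggen}, use the Frobenius identity $\tilde f_J^{\,p}=\tilde f_{(j_1+2,\ldots,j_{r+1}+2)}$, and show that every $\tilde f_J$ can be rebuilt from $f_1$, $f_2$ and their $p$-th powers. Where you diverge is in the exchange mechanism. Laplace expansion along the augmented column expresses each $\tilde f_J$ as an $\rfield$-combination of the entries $v_j$ of~$\mathbf v$; it does not by itself produce $\rfield$-linear relations \emph{among} the $\tilde f_J$'s, and the sliding-window system $\tilde f_{(k,\ldots,k+r)}=\sum_j \pm v_j\gamma_{\cdots}$ is always underdetermined in the $v_j$'s. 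The paper supplies the missing piece with a Pl\"ucker-type identity (Lemma~\ref{plucker}): for any length-$(r+2)$ sequence $K$ and any triple $\ell_1,\ell_2,\ell_3\in K$, the three minors $\tilde f_{K_{\ell_i}}$ are $\rfield$-linearly dependent with coefficients $\gamma_{K_{\ell_i,\ell_j}}$. This yields a clean induction on the spread $j_{r+1}-j_1$ (Lemma~\ref{powers}) placing every $\tilde f_J$ in ${\rm Span}_{\rfield}\{\tilde f_{(1,\ldots,r+1)}^{p^i},\tilde f_{(2,\ldots,r+2)}^{p^i}\}$, after which one more Pl\"ucker step and denominator-clearing swap $\tilde f_{(2,\ldots,r+2)}$ for $\tilde f_{(1,\ldots,r,r+2)}$. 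So your plan is on the right track, but the tool you name is not the one that closes it.

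Your fallback has a genuine gap. Even granting $\rfield(x,f_1,f_2)=\rfield(x,y,z)^G$, field equality does not yield the localized ring equality: Lemma~\ref{biggen} only says $\rfield[x,y,z]^G[x^{-1}]$ is generated by $x$ and two specific minors, and nothing forces those minors into $\rfield[x,f_1,f_2][x^{-1}]$. You would still need $\rfield[x,y,z]^G[x^{-1}]$ to be integral over $\rfield[x,f_1,f_2][x^{-1}]$, and that is not automatic here (for instance, $\{x,f_1,f_2\}$ is not a homogeneous system of parameters in $\rfield[x,y,z]$, since setting $x=0$ sends $\delta\mapsto y^2$ and leaves $z$ unconstrained). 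So the transcendence-degree shortcut, as stated, does not finish the proof.
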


For a subsequence $K=(k_1,k_2,\ldots,k_{r+2})$ of $(1,2,\ldots,2r+2)$, let $K_{\ell}$ denote the
subsequence of $K$ formed by omitting $\ell$ and let $K_{\ell,m}$ denote the subsequence formed by
omitting $\ell$ and $m$.

\begin{lemma}\label{plucker} For any subsequence $(\ell_1,\ell_2,\ell_3)$ of $K$,
$$(-1)^{\epsilon_1}\gamma_{K_{\ell_1,\ell_2}}\tilde{f}_{K_{\ell_3}}+
(-1)^{\epsilon_2}\gamma_{K_{\ell_2,\ell_3}}\tilde{f}_{K_{\ell_1}}+
(-1)^{\epsilon_3}\gamma_{K_{\ell_1,\ell_3}}\tilde{f}_{K_{\ell_2}}=0$$
for some choice of $\epsilon_i\in\{0,1\}$.
\end{lemma}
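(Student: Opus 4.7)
The plan is to expand each $\tilde{f}_{K_{\ell_i}}$ along the final column $\mathbf{v}$ of $\widetilde{\Gamma}$, and then compare the coefficients of each $v_k$ on the two sides of the asserted identity. Laplace expansion along $\mathbf{v}$ yields
$$\tilde{f}_{K_\ell}=\sum_{k\in K_\ell}(-1)^{\mathrm{pos}(k;K_\ell)+r+1}\,v_k\,\gamma_{K_{\ell,k}},$$
where $\mathrm{pos}(k;K_\ell)$ denotes the position of $k$ in the increasing enumeration of $K_\ell$. Substituting these three expansions into the claimed linear combination reduces the lemma to showing that the coefficient of each $v_k$ (for $k\in K$) vanishes for some uniform choice of $\epsilon_1,\epsilon_2,\epsilon_3$.

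For $k\notin\{\ell_1,\ell_2,\ell_3\}$, all three summands contribute, and the coefficient of $v_k$ is of the form
$$\pm\gamma_{K_{\ell_1,\ell_2}}\gamma_{K_{\ell_3,k}}\pm\gamma_{K_{\ell_2,\ell_3}}\gamma_{K_{\ell_1,k}}\pm\gamma_{K_{\ell_1,\ell_3}}\gamma_{K_{\ell_2,k}}.$$
The three products here correspond precisely to the three partitions of $\{\ell_1,\ell_2,\ell_3,k\}$ into two pairs, so with the correct signs this is the classical three-term Grassmann--Pl\"ucker relation on the maximal $r\times r$ minors of the $(2r+2)\times r$ matrix $\Gamma$, and hence vanishes. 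For $k\in\{\ell_1,\ell_2,\ell_3\}$, say $k=\ell_1$, the index $\ell_1$ is absent from $K_{\ell_1}$, so only the first and third summands contribute; each contribution is a signed multiple of $\gamma_{K_{\ell_1,\ell_2}}\gamma_{K_{\ell_1,\ell_3}}$, and these two terms cancel provided $\epsilon_1$ and $\epsilon_3$ satisfy one parity relation. The cases $k=\ell_2$ and $k=\ell_3$ give two analogous relations involving the other pairs of $\epsilon_i$'s.

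The only real obstacle is bookkeeping: one must verify that a single triple $(\epsilon_1,\epsilon_2,\epsilon_3)\in\{0,1\}^3$ simultaneously realises the correct sign pattern for the Pl\"ucker relation in the first case and the three pairwise cancellations in the second. This amounts to a parity calculation in terms of the integers $\mathrm{pos}(\ell_i;K)$, and consistency is automatic: the total antisymmetry in $(\ell_1,\ell_2,\ell_3)$ required by the Pl\"ucker relation coincides with the antisymmetry dictated by the three pairwise cancellations, so the three conditions on the $\epsilon_i$ are compatible. Fixing such a triple produces the required identity.
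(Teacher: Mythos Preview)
Your approach is correct but differs from the paper's. You expand each $\tilde{f}_{K_{\ell_i}}$ along the column $\mathbf{v}$, reducing the claim to (i) three pairwise cancellations when $k\in\{\ell_1,\ell_2,\ell_3\}$, and (ii) a three-term Grassmann--Pl\"ucker relation among the $r\times r$ minors of $\Gamma$ for each remaining $k$. The three cancellations force $\epsilon_1\equiv\epsilon_2$ and $\epsilon_1\not\equiv\epsilon_3\pmod 2$, and one checks (case by case on the position of $k$ relative to $\ell_1<\ell_2<\ell_3$) that this same choice produces exactly the Pl\"ucker sign pattern in (ii). Your final paragraph gestures at this rather than carrying it out; the verification is routine, but ``consistency is automatic'' overstates what you have actually shown, so it would strengthen the argument to exhibit the $\epsilon_i$ and check one representative case of (ii) explicitly.

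The paper instead augments $\widetilde{\Gamma}$ by a further row $[0,\ldots,0,1]$ to obtain a $(2r+3)\times(r+1)$ matrix $\Lambda$. Maximal minors of $\Lambda$ that use the new row are, up to sign, the $\gamma$'s, while those that avoid it are the $\tilde{f}$'s; the desired identity is then a \emph{single} standard Pl\"ucker relation for $\Lambda$ (taking the two index sequences to be $K$ and $K_{\ell_1,\ell_2,\ell_3}\cup\{2r+3\}$), with all but three terms vanishing. This packages your Laplace expansion and the family of three-term relations on $\Gamma$ into one stroke, eliminating the sign bookkeeping entirely. Your approach has the merit of being self-contained (it uses only the short Pl\"ucker relations on $\Gamma$ itself), while the paper's trick trades a small idea for a much shorter proof.
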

\begin{proof} Form a matrix $\Lambda$ by adding the row $[0,0,0,\ldots,0,1]$ to the
bottom of $\widetilde{\Gamma}$. Thus the minors of $\Lambda$ which include the last row
are, up to sign, minors of $\Gamma,$ and the minors of $\Lambda$ which do not include the last row
are minors of $\widetilde{\Gamma}$.
Consider the Pl\"ucker relation for $\Lambda$ determined
by the sequence $K$, of length $r+2$, and the sequence of length $r$ formed by omitting
$\ell_1,\ell_2,\ell_3$ from $K$ and adding $2r+3$. All but three of the terms in this
Pl\"ucker relation are zero and the non-zero terms give the required relation.
\end{proof}

\begin{lemma}\label{powers} For $J$ a length $r+1$ subsequence of $(1,2,\ldots,2r+2)$,
$$\tilde{f}_J\in {\rm Span}_{\rfield}\{
\tilde{f}^{p^i}_{(1,2,3,\ldots, r+1)},\tilde{f}^{p^i}_{(2,3,\ldots,r+2)}\mid i=0,1,2,\ldots\}.$$
\end{lemma}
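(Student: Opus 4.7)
The plan is to proceed by strong induction on $m := j_{r+1}$, the largest index in $J$. The base case $m = r+1$ forces $J = (1,2,\ldots,r+1)$, which is already one of the spanning elements. For the inductive step ($m \geq r+2$) I would combine two mechanisms. First, \emph{Frobenius reduction}: since row $k+2$ of $\widetilde{\Gamma}$ is obtained from row $k$ by raising every entry to the $p$-th power, whenever $j_1 \geq 3$ the submatrix on rows $J$ is the entry-wise $p$-th power of the submatrix on rows $J - 2 := (j_1-2,\ldots,j_{r+1}-2)$, and so $\tilde{f}_J = \tilde{f}_{J-2}^p$ in characteristic $p$. Second, the three-term Pl\"ucker-type relations of Lemma~\ref{plucker}. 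Crucially, the stated span is closed under taking $p$-th powers because $\rfield$ is closed under Frobenius; hence the case $j_1 \geq 3$ is immediate from the inductive hypothesis applied to $J-2$ together with Frobenius reduction.

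For $j_1 \in \{1,2\}$, I plan to design a length $r+2$ subsequence $K$ containing $J$ and a triple $(\ell_1,\ell_2,\ell_3)$ in $K$ so that Lemma~\ref{plucker} expresses $\tilde{f}_J$ in terms of $\tilde{f}_{K_{\ell_1}}$ and $\tilde{f}_{K_{\ell_3}}$, both already handled. Concretely, when $j_1 = 2$: if $J = (2,3,\ldots,r+2)$ it is a spanning element; otherwise, letting $k \in \{1,\ldots,r\}$ be the length of the longest initial consecutive block $(2,3,\ldots,k+1)$ of $J$, take $K = J \cup \{k+2\}$ (noting $k+2 \notin J$) and apply Lemma~\ref{plucker} with $\ell_1 = m$, $\ell_2 = k+2$, $\ell_3 = 2$. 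Then $K_{\ell_1} = K\setminus\{m\}$ has max $< m$ and lies in the span by the inductive hypothesis, while $K_{\ell_3} = K\setminus\{2\}$ has $\min = 3$, so Frobenius reduction applied to $K_{\ell_3} - 2$ (whose max is $m-2$) combined with the inductive hypothesis places $\tilde{f}_{K_{\ell_3}}$ in the span as well. When $j_1 = 1$, an analogous choice $K = J \cup \{k+1\}$ (with $k$ the length of the initial block $(1,2,\ldots,k) \subseteq J$) and Pl\"ucker with $\ell_1 = m$, $\ell_2 = k+1$, $\ell_3 = 1$ reduces to one term of smaller max and one term with $\min \geq 2$, the latter handled by the previous case.

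The main obstacle I expect is verifying that the coefficient of $\tilde{f}_J$ appearing in the chosen Pl\"ucker relation, namely $\pm \gamma_{K_{\ell_1,\ell_3}}$, is a nonzero element of $\rfield$ so that one can actually solve for $\tilde{f}_J$. This reduces to checking that the $r$ rows of $\Gamma$ indexed by $K_{\ell_1,\ell_3}$ are linearly independent over $\rfield$; because the $x_{ij}$ are algebraically independent over $\fp$ and the rows of $\Gamma$ are Frobenius powers of two generic vectors, a Moore-type argument (for instance, isolating a nonzero leading monomial of the relevant $r \times r$ minor under a suitable term order on the $x_{ij}$) should establish nonvanishing, but this bookkeeping is the most delicate part of the argument.
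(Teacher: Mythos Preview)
Your approach is correct and uses the same two ingredients as the paper---the Frobenius identity $\tilde{f}_J^p = \tilde{f}_{J+2}$ and the three-term relation of Lemma~\ref{plucker}---but organises the induction differently. The paper inducts on the \emph{spread} $t := j_{r+1} - j_1$ rather than on $j_{r+1}$. The base case $t = r$ forces $J$ to consist of $r+1$ consecutive integers, so $\tilde{f}_J$ is directly a $p^i$-th power of one of the two basic minors. For $t > r$ there is necessarily a gap strictly between $j_1$ and $j_{r+1}$; inserting any such $m$ to form $K$ and applying Lemma~\ref{plucker} to the triple $(j_1, m, j_{r+1})$ expresses $\tilde{f}_J = \tilde{f}_{K_m}$ in terms of $\tilde{f}_{K_{j_1}}$ and $\tilde{f}_{K_{j_{r+1}}}$, \emph{both} of which have strictly smaller spread. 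This single Pl\"ucker step replaces your case split on $j_1$ and the two-stage Pl\"ucker-then-Frobenius reduction you need within a fixed value of $m$; the paper's induction is therefore shorter, though yours works just as well. Your concern about the nonvanishing of the coefficient $\gamma_{K_{\ell_1,\ell_3}}$ applies equally to the paper's argument (which silently divides by $\gamma_{K_{j_1,j_{r+1}}}$); in either case it follows from the algebraic independence of the $x_{ij}$, e.g.\ via the specialisation $x_{2j} \mapsto x_{1j}^{p^{r+1}}$, under which every $r\times r$ minor of $\Gamma$ becomes a nonzero Moore determinant in $x_{11},\ldots,x_{1r}$.
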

\begin{proof} Since the $p^{th}$ power map is $\fp$-linear,
$\tilde{f}_J^p=\tilde{f}_{(j_1+2,j_2+2,\ldots,j_{(r+1)}+2)}$.

The proof is by induction on $t=j_{r+1}-j_1$, starting with $t=r$.
If $t=r$ and $j_1$ is even, say $j_1=2i+2$, then $\tilde{f}_J=\tilde{f}^{p^i}_{(2,3,\ldots,r+2)}$.
If $t=r$ and $j_1$ is odd, say $j_1=2i+1$, then $\tilde{f}_J= \tilde{f}^{p^i}_{(1,2,3,\ldots, r+1)}$.

Suppose $t>r$. Choose an integer $m\not\in\{j_1,\ldots,j_{r+1}\}$ with $j_1<m<j_{r+1}$. Insert $m$ into
the sequence $J$ to produce a sequence $K$ of length $r+2$. Apply Lemma~\ref{plucker} to the
subsequence $(j_1,m,j_{r+1})$ of $K$. This allows us to write $\tilde{f}_J$ as an $\rfield$-linear combination
of $\tilde{f}_{K_{j_1}}$ and  $\tilde{f}_{K_{j_{(r+1)}}}$, both of which lie in ${\rm Span}_{\rfield}\{
\tilde{f}^{p^i}_{(1,2,3,\ldots, r+1)},\tilde{f}^{p^i}_{(2,3,\ldots,r+2)}\mid i=1,2,\ldots\}$ by induction.
\end{proof}

Using Lemma~\ref{powers} and Lemma~\ref{biggen}, we see that
 $$\rfield[x,y,z]^G[x^{-1}]
=\rfield[x,\tilde{f}_{(1,2,\ldots,r+1)},\tilde{f}_{(2,3,\ldots,r+2)}][x^{-1}].$$ Applying Lemma~\ref{plucker} and clearing denominators
completes the proof of Theorem~\ref{fieldthm}.

\begin{remark} It follows from Theorem~\ref{sdx} that applying the SAGBI/Divide-by-$x$ algorithm to
$\{x,f_1,f_2,N_G(z)\}$ will produce a SAGBI basis for $\rfield[V]^G$.
\end{remark}

\section{The invariants for rank two dimension three}\label{r2d3_sec}

In this section $G=(\zp)^2=\langle e_1,e_2\rangle$. We start with the generic representation over
$\rfield:=\fp(x_{11},x_{12},x_{21},x_{22})$ given by $e_i\mapsto \sigma(x_{1i},x_{2,i})$; hence, we assume $p>2$.
Consider $\mathcal{B}:=\{x,f_1,f_2,N_G(z)\}$ with $f_1$ and $f_2$ defined as in Section~\ref{field_section}.
Thus $\lt(f_1)=\gamma_{12}y^p$ and $\lt(f_2)=\gamma_{12}\gamma_{13}y^{p+2}$. There is a single
non-trivial \tat: $(f_1^{p+2},f_2^p)$.

For $p=3$, define
$$\widetilde{N}:=\gamma_{13}^3f_1^5-\gamma_{12}^2f_2^3+c_1xf_1^3f_2 +c_2x^3f_1^4 +c_3x^4f_2f_1^2+c_4x^5f_2^2$$
with
$c_1:=\gamma_{13}^3$,
$c_2= \gamma_{12}\gamma_{23}^3+(\gamma_{13}^3/\gamma_{12})^2$,
$c_3=\gamma_{12}\gamma_{13}^2\gamma_{14}-\gamma_{12}\gamma_{23}^3-(\gamma_{13}^3/\gamma_{12})^2$ and
$c_4=\gamma_{12}^3\gamma_{34}-\gamma_{12}^2\gamma_{13}^2\gamma_{14}+\gamma_{12}\gamma_{23}^3+(\gamma_{13}^3/\gamma_{12})^2$.

For $p>3$, define
$$\widetilde{N}:=\gamma_{13}^pf_1^{p+2}-\gamma_{12}^2f_2^p+c_1x^{p-2}f_1^pf_2+c_2x^pf_1^{p+1}
+c_3x^{2p-2}f_2f_1^{p-1}+c_4x^{2p-1}f_2^{(p+1)/2}f_1^{(p-3)/2}$$
with
$c_4=\gamma_{12}^2\gamma_{13}^{(p-3)/2}\left(\gamma_{13}\gamma_{24}-\gamma_{23}\gamma_{14}\right)$,
$c_3=\gamma_{12}(\gamma_{14}\gamma_{13}^{p-1}-\gamma_{23}^p)$,
$c_2= \gamma_{12}\gamma_{23}^p$ and
$c_1=-2\gamma_{13}^p$.

\begin{lemma}\label{rank2-gen-lt}
$\lt(\widetilde{N})=-\frac{1}{2}\gamma_{12}^{2p+2}x^{2p}z^{p^2}$.
\end{lemma}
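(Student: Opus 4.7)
The plan is to locate $x^{2p}z^{p^2}$ precisely in $\widetilde{N}$ and show that every strictly larger monomial (in grevlex with $z>y>x$) cancels. Each summand of $\widetilde{N}$ is homogeneous of total degree $p^2+2p$, so an appearing monomial $z^by^ax^c$ with $a+b+c=p^2+2p$ is strictly larger than $x^{2p}z^{p^2}$ in grevlex iff $c<2p$.

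First I would expand $f_1=\gamma_{12}y^p+\gamma_{13}\delta x^{p-2}+\gamma_{23}yx^{p-1}$, and use the formula $f_2=(\gamma_{12}f_{(1,2,4)}+f_1^2)/(2x^{p-2})$ for $r=2$. Cofactor expansion of $f_{(1,2,4)}$ along its third column gives $f_{(1,2,4)}=-\gamma_{12}\delta^p+\gamma_{14}\delta x^{2p-2}+\gamma_{24}yx^{2p-1}$. Since $\delta^p=y^{2p}-x^pz^p$, the $-\gamma_{12}^2 y^{2p}$ contribution to $\gamma_{12}f_{(1,2,4)}$ cancels the unique $y^{2p}$ monomial $\gamma_{12}^2 y^{2p}$ in $f_1^2$; after dividing by $2x^{p-2}$, the surviving $\gamma_{12}^2 x^pz^p$ becomes the monomial $\tfrac{1}{2}\gamma_{12}^2 x^2z^p$ in $f_2$. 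By the Frobenius identity $(A_1+\cdots+A_k)^p=A_1^p+\cdots+A_k^p$ in characteristic $p$, this term contributes $\tfrac{1}{2^p}\gamma_{12}^{2p}x^{2p}z^{p^2}$ to $f_2^p$; Fermat's little theorem gives $2^p\equiv 2\pmod{p}$ in $\fp$, so this equals $\tfrac{1}{2}\gamma_{12}^{2p}x^{2p}z^{p^2}$. Multiplication by $-\gamma_{12}^2$ yields the stated coefficient.

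To see that no other summand of $\widetilde{N}$ contributes to $x^{2p}z^{p^2}$, bound the $z$-degrees: since $f_1$ has $z$-degree $1$ and $f_2$ has $z$-degree $p$, the maximum $z$-degrees in $f_1^{p+2}$, $f_1^pf_2$, $f_1^{p+1}$, $f_2f_1^{p-1}$, and $f_2^{(p+1)/2}f_1^{(p-3)/2}$ are $p+2$, $2p$, $p+1$, $2p-1$, and $\tfrac{p(p+1)+p-3}{2}$ respectively, all strictly less than $p^2$; only $f_2^p$ reaches $z^{p^2}$.

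Finally, every monomial in $\widetilde{N}$ with $x$-degree strictly less than $2p$ must cancel, and this mirrors the SAGBI subduction of the tête-à-tête $(f_1^{p+2},f_2^p)$ by $\{x,f_1,f_2\}$. After the $y^{p^2+2p}$ leads cancel, the next-highest monomial in $\gamma_{13}^p f_1^{p+2}-\gamma_{12}^2 f_2^p$ is $y^{p^2+p+2}x^{p-2}$; since neither exponent is divisible by $p$, Frobenius precludes any contribution from $f_2^p$, and the coefficient coming from $\gamma_{13}^p f_1^{p+2}$ is canceled by $c_1 x^{p-2}f_1^p f_2$ with $c_1=-2\gamma_{13}^p$. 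The successive residues have lead monomials $y^{p^2+p}x^p$, $y^{p^2+2}x^{2p-2}$, and $y^{p^2+1}x^{2p-1}$, canceled by the remaining terms; this determines $c_2, c_3, c_4$ as stated (the case $p=3$ is handled by its own explicit formula). The main obstacle is the detailed bookkeeping of sub-leading terms in each $f_1^af_2^b$ product; although routine, each cancellation ultimately reduces to an identity among the Plücker coordinates $\gamma_{ij}$ of $\Gamma$.
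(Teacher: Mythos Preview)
Your strategy is the same as the paper's: both identify $-\tfrac12\gamma_{12}^{2p+2}$ as the coefficient of $x^{2p}z^{p^2}$ (your $z$-degree argument for this is clean and correct) and then argue that every monomial with $x$-degree below $2p$ cancels. The difference is that you leave this cancellation as ``routine bookkeeping,'' whereas it is the entire substance of the lemma. Listing the sequence of lead monomials $y^{p^2+p+2}x^{p-2}$, $y^{p^2+p}x^p$, $y^{p^2+2}x^{2p-2}$, $y^{p^2+1}x^{2p-1}$ and asserting they are killed by the $c_i$ terms is not yet a proof: one must also check that after each cancellation no \emph{other} monomials of small $x$-degree survive, and your sketch does not do this.

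The paper's device for making this tractable is to work modulo the ideal $\mathfrak{n}=(x^{2p+1},\,yx^{2p})$. The degree-$(p^2+2p)$ monomials not in $\mathfrak{n}$ are exactly those with $x$-degree $<2p$ together with $x^{2p}z^{p^2}$ itself, so proving $\widetilde{N}\equiv_{\mathfrak{n}}-\tfrac12\gamma_{12}^{2p+2}x^{2p}z^{p^2}$ is equivalent to the lead-term statement. The ideal drastically simplifies every factor (for $p>3$: $f_1^p\equiv_{\mathfrak{n}}\gamma_{12}^py^{p^2}$ since $p(p-2)>2p$; $f_2^p$ reduces to three terms; $x^{2p-2}f_1\equiv_{\mathfrak{n}}\gamma_{12}x^{2p-2}y^p$; and $x^{2p-1}f_2\equiv_{\mathfrak{n}}\gamma_{12}\gamma_{13}x^{2p-1}y^{p+2}$). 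The rearranged definition of $f_2$, namely $f_1^2-\gamma_{12}^2\delta^p-2x^{p-2}f_2=-\gamma_{12}(\gamma_{14}\delta x^{2p-2}+\gamma_{24}yx^{2p-1})$, then collapses the combination $\gamma_{13}^p f_1^{2}-\gamma_{12}^2\gamma_{13}^p\delta^p+c_1x^{p-2}f_2$ (with $c_1=-2\gamma_{13}^p$) in one stroke; after that the remaining cancellations with the stated $c_2,c_3,c_4$ are a short direct check. This is what your subduction-style outline is reaching toward, but without the ideal $\mathfrak{n}$ and the $f_2$-relation the bookkeeping you defer is genuinely the whole proof. (For $p=3$ the paper simply defers to a Magma verification, so ``its own explicit formula'' would also need to be checked.)
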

\begin{proof} For $p=3$, verifying the result is a Magma calculation. Suppose $p>3$.
We work modulo the ideal in $\rfield[x,y,z]$ generated by $x^{2p+1}$ and
$yx^{2p}$, which we denote by $\mathfrak{n}$. By definition,
$f_1=\gamma_{12}y^p+\gamma_{13}\delta x^{p-2}+\gamma_{23}yx^{p-1}$.
For $p\geq 5$, we have ${p^2-2p}>2p+1$ and $2p+p-4>2p$.
Thus $f_1^p\equiv_{\mathfrak{n}}\gamma_{12}^py^{p^2}$
and $x^{2p-2}f_1\equiv_{\mathfrak{n}}\gamma_{12}x^{2p-2}y^p$.
By definition $2x^{p-2}f_2=\gamma_{12}f_{124}+f_1^2=f_1^2-\gamma_{12}^2\delta^p
+\gamma_{12}\gamma_{14}\delta x^{2p-2}+\gamma_{12}\gamma_{24}yx^{2p-1}$.
Rearranging gives
\begin{eqnarray}\label{f2eqn}
f_1^2-\gamma_{12}^2\delta^p-2x^{p-2}f_2&=&-\gamma_{12}\left(\gamma_{14}\delta x^{2p-2}+\gamma_{24}y x^{2p-1}\right)
\end{eqnarray}
Solving for $f_2$ gives
\begin{eqnarray*}
f_2&=&\gamma_{12}\gamma_{13}\delta y^p+\gamma_{12}\gamma_{23}x y^{p+1}
+\frac{1}{2}\gamma_{12}^2x^2z^p+\frac{1}{2}\gamma_{13}^2\delta^2x^{p-2}+\gamma_{13}\gamma_{23}\delta y x^{p-1}\\
&+&\frac{1}{2}(\gamma_{12}\gamma_{14}\delta +\gamma_{23}^2 y^2)x^p+\frac{1}{2}\gamma_{12}\gamma_{24}y x^{p+1}.
\end{eqnarray*}
Thus $f_2^p\equiv_{\mathfrak{n}} \gamma_{12}^p\gamma_{13}^p\delta^py^{p^2}+\gamma_{12}^p\gamma_{23}^px^p y^{p^2+p}
+\frac{1}{2}\gamma_{12}^{2p}x^{2p}z^{p^2}$ and
$$x^{2p-1}f_2\equiv_{\mathfrak{n}}\gamma_{12}\gamma_{13}x^{2p-1}\delta y^p
\equiv_{\mathfrak{n}}\gamma_{12}\gamma_{13}x^{2p-1}y^{p+2}.$$
Substituting gives
\begin{eqnarray*}
\widetilde{N}&\equiv_{\mathfrak{n}}&
\gamma_{12}^py^{p^2}\left(\gamma_{13}^pf_1^2-\gamma_{12}^2\gamma_{13}^p\delta^p-2\gamma_{13}^px^{p-2}f_2
-\gamma_{12}^2\gamma_{23}^px^py^p+c_2x^pf_1\right)\\
&+&c_3x^{2p-2}f_2\left(\gamma_{12}y^p\right)^{p-1}
+c_4x^{2p-1}\left(\gamma_{12}\gamma_{13}y^{p+2}\right)^{(p+1)/2}\left(\gamma_{12}y^p\right)^{(p-3)/2}\\
&-&\frac{1}{2}\gamma_{12}^{2p+2}x^{2p}z^{p^2}.
\end{eqnarray*}
Using Equation~\ref{f2eqn} gives
\begin{eqnarray*}
\widetilde{N}&\equiv_{\mathfrak{n}}&
\gamma_{12}^py^{p^2}\left(-\gamma_{12}\gamma_{13}^p\left(\gamma_{14}\delta x^{2p-2}+\gamma_{24}yx^{2p-1}\right)
-\gamma_{12}^2\gamma_{23}^px^py^p+c_2x^pf_1\right)\\
&+&c_3x^{2p-2}f_2\left(\gamma_{12}y^p\right)^{p-1}
+c_4x^{2p-1}y^{p^2+1}\gamma_{12}^{p-1}\gamma_{13}^{(p+1)/2}
-\frac{1}{2}\gamma_{12}^{2p+2}x^{2p}z^{p^2}.
\end{eqnarray*}
Simplifying gives
\begin{eqnarray*}
\widetilde{N}&\equiv_{\mathfrak{n}}&
\gamma_{12}^{p+1}y^{p^2}\left(\delta x^{2p-2}\left(\gamma_{13}\gamma_{23}^p-\gamma_{14}\gamma_{13}^p\right)+
yx^{2p-1}\left(\gamma_{23}^{p+1}-\gamma_{24}\gamma_{13}^p\right)\right)\\
&+&c_3x^{2p-2}\gamma_{12}^py^{p^2}\left( \gamma_{13}\delta+\gamma_{23}xy\right)
+c_4x^{2p-1}y^{p^2+1}\gamma_{12}^{p-1}\gamma_{13}^{(p+1)/2}
-\frac{1}{2}\gamma_{12}^{2p+2}x^{2p}z^{p^2}\\
&\equiv_{\mathfrak{n}}&\gamma_{12}^py^{p^2}\delta x^{2p-2}
\left(\gamma_{12}\gamma_{13}\gamma_{23}^p-\gamma_{12}\gamma_{14}\gamma_{13}^p+c_3\gamma_{13}\right)\\
&+&\gamma_{12}^{p-1}x^{2p-1}y^{p^2+1}\left(\gamma_{12}^2\gamma_{23}^{p+1}-\gamma_{12}^2\gamma_{24}\gamma_{13}^p
+c_3\gamma_{12}\gamma_{23}+c_4\gamma_{13}^{(p+1)/2}\right)\\
&-&\frac{1}{2}\gamma_{12}^{2p+2}x^{2p}z^{p^2}.
\end{eqnarray*}
Substituting for $c_3$ and $c_4$ gives
$$\widetilde{N}\equiv_{\mathfrak{n}}
-\frac{1}{2}\gamma_{12}^{2p+2}x^{2p}z^{p^2},$$
as required.
\end{proof}

\begin{theorem}\label{rk2gen}For the generic rank two representation, $\mathcal{B}=\{x,f_1,f_2,N_G(z)\}$ is a SAGBI basis for $\rfield[V_3]^G$.
Furthermore, there is a single relation among the generators constructed by subducting $\gamma_{13}^pf_1^{p+2}-\gamma_{12}^2f_2^p$.
\end{theorem}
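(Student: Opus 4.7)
The plan is to apply Theorem~\ref{genthm} to the subalgebra $A := \rfield[\mathcal{B}] \subseteq \rfield[V_3]^G$. Three hypotheses must be checked: the field-of-fractions condition $A[x^{-1}] = \rfield[V_3]^G[x^{-1}]$, integrality of $\rfield[V_3]^G$ over $A$, and the SAGBI-basis property for $\mathcal{B}$.

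The first hypothesis is immediate from Theorem~\ref{fieldthm}: the inclusions $\rfield[V_3]^G[x^{-1}] = \rfield[x, f_1, f_2][x^{-1}] \subseteq A[x^{-1}] \subseteq \rfield[V_3]^G[x^{-1}]$ force equality. For integrality, the subset $\{x, f_1, N_G(z)\} \subseteq \mathcal{B}$ has lead monomials $x$, $\gamma_{12} y^p$, and $z^{p^2}$, involving disjoint variables with positive exponents, so it is a homogeneous system of parameters for $\rfield[V_3]$; hence $\rfield[V_3]^G$ is finite over $\rfield[x, f_1, N_G(z)] \subseteq A$, and therefore integral over $A$.

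The core of the argument is the SAGBI property. Since $\lm(x) = x$ and $\lm(N_G(z)) = z^{p^2}$ involve variables absent from $\lm(f_1) = \gamma_{12} y^p$ and $\lm(f_2) = \gamma_{12}\gamma_{13} y^{p+2}$, any non-trivial \tat\ must involve only $f_1$ and $f_2$; the equation $pa = (p+2)b$ combined with $\gcd(p, p+2) = 1$ (valid since $p$ is odd) identifies $(f_1^{p+2}, f_2^p)$ as the unique minimal non-trivial \tat. Cancelling lead coefficients gives the initial remainder $\gamma_{13}^p f_1^{p+2} - \gamma_{12}^2 f_2^p$, and the four correction monomials $c_1 x^{p-2} f_1^p f_2$, $c_2 x^p f_1^{p+1}$, $c_3 x^{2p-2} f_2 f_1^{p-1}$, $c_4 x^{2p-1} f_2^{(p+1)/2} f_1^{(p-3)/2}$ defined above are precisely those needed to cancel successive intermediate lead monomials arising during the subduction. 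Lemma~\ref{rank2-gen-lt} then pinpoints $\lt(\widetilde{N}) = -\tfrac{1}{2}\gamma_{12}^{2p+2} x^{2p} z^{p^2} = -\tfrac{1}{2}\gamma_{12}^{2p+2} x^{2p}\lt(N_G(z))$, so the subduction continues by subtracting the term $-\tfrac{1}{2}\gamma_{12}^{2p+2} x^{2p} N_G(z)$, strictly decreasing the lead monomial.

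The main obstacle is showing that the remaining subduction terminates at zero, so that no new SAGBI generators are forced. One way forward is to apply Theorem~\ref{sdx}: since $\mathcal{B}$ contains $x$, the element $\gamma_{12}^{-1} f_1$ with $\lm = y^p$, and $N_G(z)$ with $\lm = z^{p^2}$, the SAGBI/Divide-by-$x$ algorithm applied to $\mathcal{B}$ terminates with a SAGBI basis for $\rfield[V_3]^G$, and the question becomes whether this algorithm enlarges $\mathcal{B}$. By Theorem~\ref{fieldthm} the set $\{x, f_1, f_2\}$ is algebraically independent (generating the degree-$3$ field extension $\rfield(V_3)^G/\rfield$), so $N_G(z)$ satisfies exactly one polynomial relation over $\rfield[x, f_1, f_2]$. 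The chain obtained from subducting $(f_1^{p+2}, f_2^p)$ through $\widetilde{N}$ and then through $N_G(z)$ is a candidate for this unique relation; the bookkeeping needed is to verify that the strictly smaller-lead remainder from the $N_G(z)$-step subducts cleanly through $\{x, f_1, f_2\}$ without generating further $z$-bearing terms, which can be done by tracking lead monomials or by appealing directly to the uniqueness of the relation. Once this is confirmed, the relation is the full subduction, $\mathcal{B}$ is a SAGBI basis for $A$, and Theorem~\ref{genthm} gives $A = \rfield[V_3]^G$, completing the proof.
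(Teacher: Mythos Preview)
Your setup is correct—the field-of-fractions and integrality hypotheses of Theorem~\ref{genthm} hold for exactly the reasons you give—but the SAGBI verification has a real gap. After carrying the subduction of $(f_1^{p+2},f_2^p)$ to $\widetilde{N}$ via Lemma~\ref{rank2-gen-lt} and then subtracting $-\tfrac{1}{2}\gamma_{12}^{2p+2}x^{2p}N_G(z)$, you are left with a remainder you do not control. ``Tracking lead monomials'' is not a proof, and the ``uniqueness of the relation'' appeal is circular: knowing that $N_G(z)$ satisfies a unique minimal polynomial over $\rfield[x,f_1,f_2]$ does not tell you that the greedy subduction process reaches it—that is precisely what the SAGBI property asserts. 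Theorem~\ref{sdx} guarantees that the Divide-by-$x$ algorithm terminates at \emph{some} SAGBI basis of $\rfield[V_3]^G$, but gives no a priori reason it should not enlarge $\mathcal{B}$.

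The paper avoids this difficulty by a change of generating set. Since the order is grevlex with $x$ smallest, $\lt(\widetilde{N})=-\tfrac{1}{2}\gamma_{12}^{2p+2}x^{2p}z^{p^2}$ forces $x^{2p}\mid\widetilde{N}$; set $N:=\widetilde{N}/x^{2p}\in\rfield[V_3]^G$, so $\lm(N)=z^{p^2}$. For $\mathcal{B}':=\{x,f_1,f_2,N\}$ the sole non-trivial \tat\ $(f_1^{p+2},f_2^p)$ subducts, by the very definition of $\widetilde{N}$, to $\widetilde{N}=x^{2p}N$, a single product of elements of $\mathcal{B}'$, and hence to zero in one further step. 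Thus $\mathcal{B}'$ is automatically a SAGBI basis for the algebra it generates, and Theorem~\ref{genthm} (with the checks you already supplied) gives $\rfield[\mathcal{B}']=\rfield[V_3]^G$. The lead term algebra of $\rfield[V_3]^G$ is therefore $\rfield[x,y^p,y^{p+2},z^{p^2}]$; since $\lm(N_G(z))=z^{p^2}=\lm(N)$, the original set $\mathcal{B}$ has the same lead monomials and is also a SAGBI basis for $\rfield[V_3]^G$. Only \emph{after} the SAGBI property is established this way does one conclude that the \tat\ for $\mathcal{B}$ subducts to zero and read off the relation—the logical order is the reverse of what you attempted.
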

\begin{proof} Define $N:=\widetilde{N}/x^{2p}$.
It follows from Lemma~\ref{rank2-gen-lt} that $\mathcal{B}':=\{x,f_1,f_2,N\}$ is a SAGBI basis for the algebra it generates.
Applying Theorem~\ref{fieldthm} and Theorem~\ref{genthm} proves that $\mathcal{B}'$ is a SAGBI basis for $\rfield[V_3]^G$.
Thus the lead term algebra of $\rfield[V_3]^G$ is generated by $\{x,y^p,y^{p+2}, z^{p^2}\}$.
Since $\lm(\mathcal{B})=\{x,y^p,y^{p+2}, z^{p^2}\}$, $\mathcal{B}$ is also
a SAGBI basis for $\rfield[V_3]^G$. The single non-trivial \tat\ subducts to produce the relation.
\end{proof}

For $\field$ an arbitrary field of characteristic $p$, consider the representation $V_M$ of $(\zp)^2$ determined by
$$M:=\left[\begin{array}{cc}
c_{11}&c_{12}\\
c_{21}&c_{22}
\end{array}\right]$$
with $c_{ij}\in\field$, i.e., $e_i\mapsto \sigma(c_{1i},c_{2i})$ for $i=1,2$.
Let $\psi_M:\fp[x_{ij}]\to\field$ denote the evaluation map: $\psi_M(x_{ij})=c_{ij}$.
For $f\in\rfield[x,y,z]$, let $\of$ denote the element of $\field[x,y,z]$
constructed by minimally clearing denominators to obtain an element of $\fp[x_{ij}]$ and then and applying $\psi_M$ to the coefficients
of that element.
Note that we can also interpret elements of $\fp[x_{ij}]$ as regular functions on $\field^{2\times 2}$,
the space of representations of the given type.

\begin{theorem} If $\gamma_{12}\not=0$ and $\gamma_{13}\not=0$ then
$\{x,\overline{f_1},\overline{f_2}, N_G(z)\}$ is a SAGBI basis for $\field[V_M]^G$.
Furthermore, there is a single relation among the generators constructed by
subducting $\og_{13}^p\overline{f_1}^{p+2}-\og_{12}^2\overline{f_2}^p$.
\end{theorem}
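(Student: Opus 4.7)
The plan is to mirror the proof of Theorem~\ref{rk2gen} via the evaluation map $\psi_M$. The elements $f_1,f_2\in\rfield[x,y,z]$ and the coefficients $c_i$ of $\widetilde{N}$ from Lemma~\ref{rank2-gen-lt} have coefficients in $\fp[x_{ij}][\gamma_{12}^{-1}]$, so the hypothesis $\overline{\gamma}_{12}\ne 0$ ensures $\psi_M$ extends to yield $\overline{f_1},\overline{f_2},\overline{\widetilde{N}}\in\field[V_M]^G$. The proof of Lemma~\ref{rank2-gen-lt} shows $\widetilde{N}$ is divisible by $x^{2p}$ in $\rfield[V]$ (the residue modulo $\mathfrak{n}=(x^{2p+1},yx^{2p})$ is $-\tfrac{1}{2}\gamma_{12}^{2p+2}x^{2p}z^{p^2}$, and $\mathfrak{n}$ itself lies in $x^{2p}\rfield[V]$), so $\overline{N}:=\overline{\widetilde{N}}/x^{2p}\in\field[V_M]^G$ is well-defined. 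Under the additional hypothesis $\overline{\gamma}_{13}\ne 0$ the leading terms survive: $\lt(\overline{f_1})=\overline{\gamma}_{12}y^p$, $\lt(\overline{f_2})=\overline{\gamma}_{12}\overline{\gamma}_{13}y^{p+2}$, and $\lt(\overline{N})=-\tfrac{1}{2}\overline{\gamma}_{12}^{2p+2}z^{p^2}$.

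Set $\mathcal{B}':=\{x,\overline{f_1},\overline{f_2},\overline{N}\}$ and let $A'$ denote the algebra it generates. The unique non-trivial \tat\ $(\overline{f_1}^{p+2},\overline{f_2}^p)$ subducts to zero via the specialization of the relation from Lemma~\ref{rank2-gen-lt}, so $\mathcal{B}'$ is a SAGBI basis for $A'$. The subset $\{x,\overline{f_1},\overline{N}\}\subset A'$ has lead monomials $\{x,y^p,z^{p^2}\}$ forming a homogeneous system of parameters for $\field[V]$ and hence for $\field[V_M]^G$, so $\field[V_M]^G$ is integral over $A'$.

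The main technical step is establishing $A'[x^{-1}]=\field[V_M]^G[x^{-1}]$; Theorem~\ref{ffthm} does not apply directly since $\overline{\gamma}_{12}\ne 0$ need not force the orbit of $\delta$ to be free. Solving the specialized relation for $\overline{N}$ after inverting $x$ gives $A'[x^{-1}]=\field[x^{\pm 1},\overline{f_1},\overline{f_2}]$. A field-degree count yields $\field(V_M)^G=\field(x,\overline{f_1},\overline{f_2})$: $\overline{f_1}$ is of degree $p$ in $y$ over $\field(x)$ with leading coefficient $\overline{\gamma}_{12}\ne 0$, and the explicit term $\tfrac{1}{2}\overline{\gamma}_{12}^2 x^2 z^p$ in $\overline{f_2}$ makes $\overline{f_2}$ of degree $p$ in $z$ over $\field(x,y)$ with leading coefficient $\tfrac{1}{2}\overline{\gamma}_{12}^2 x^2$ (nonzero after inverting $x$), so $[\field(x,y,z):\field(x,\overline{f_1},\overline{f_2})]\le p^2=|G|=[\field(x,y,z):\field(V_M)^G]$. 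In particular $\{x,\overline{f_1},\overline{f_2}\}$ is algebraically independent, so $A'[x^{-1}]=\field[x^{\pm 1}][\overline{f_1},\overline{f_2}]$ is a localization of a polynomial ring in three variables, hence normal. Since $\field[V_M]^G[x^{-1}]$ is also normal (invariants of a $p$-group form a UFD by \cite[Corollary~3.9.3]{Benson}) and is integral over $A'[x^{-1}]$ with the same field of fractions, the two rings agree.

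Applying Theorem~\ref{genthm} now gives $A'=\field[V_M]^G$ with $\mathcal{B}'$ as SAGBI basis. Since $\lm(\overline{N})=\lm(N_G(z))=z^{p^2}$, the set $\mathcal{B}=\{x,\overline{f_1},\overline{f_2},N_G(z)\}$ generates the same lead-term algebra as $\mathcal{B}'$, so $\mathcal{B}$ is also a SAGBI basis for $\field[V_M]^G$. The unique non-trivial \tat\ $(\overline{f_1}^{p+2},\overline{f_2}^p)$ then subducts via the specialization of Lemma~\ref{rank2-gen-lt}, with $\overline{N}$ rewritten in terms of $\{x,\overline{f_1},\overline{f_2},N_G(z)\}$ using the matching of lead terms $z^{p^2}$, producing the claimed single relation.
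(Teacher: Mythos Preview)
Your overall structure is sound and, at the decisive step $A'[x^{-1}]=\field[V_M]^G[x^{-1}]$, you take a genuinely different route from the paper. The paper notes that $\overline{f_1}$ is linear in $z$ with coefficient $-\overline{\gamma}_{13}x^{p-1}$, so that $\field[x,\overline{f_1},N_G(y)][x^{-1}]=\field[V_M]^G[x^{-1}]$; it then uses the Pl\"ucker relations of Lemma~\ref{plucker} (as in the proof of Lemma~\ref{powers}) to show that $N_G(y)$ lies in the algebra generated by $x,\overline{f_1},\overline{f_2}$. Your normality argument is attractive in that it sidesteps the Pl\"ucker calculus entirely.

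However, your field-degree count has a gap. The two assertions ``$\overline{f_1}$ has $y$-degree $p$ over $\field(x)$'' and ``$\overline{f_2}$ has $z$-degree $p$ over $\field(x,y)$'' do not by themselves yield $[\field(x,y,z):\field(x,\overline{f_1},\overline{f_2})]\le p^2$: the first is not a statement about an element of $\field(x)[y]$, since $\overline{f_1}$ genuinely involves $z$. (As a cautionary example, $f_1=y+z$ has $y$-degree $1$ and $f_2=yz$ has $z$-degree $1$ over $\field(y)$, yet $[\field(y,z):\field(f_1,f_2)]=2$.) What actually makes the bound work is a feature you did not invoke: since $\overline{\gamma}_{13}\ne 0$, the invariant $\overline{f_1}$ is \emph{linear} in $z$ with coefficient $-\overline{\gamma}_{13}x^{p-1}$, so $\field(x,y,\overline{f_1})=\field(x,y,z)$ and one may solve for $z$ as a polynomial of $y$-degree $p$ over $\field(x,\overline{f_1})$. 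Substituting into $\overline{f_2}$ (which has $z$-degree $p$) produces a relation for $y$ of degree $p^2$ over $\field(x,\overline{f_1},\overline{f_2})$, which gives the required bound. With this correction the remainder of your argument---algebraic independence, normality of $\field[x^{\pm1},\overline{f_1},\overline{f_2}]$, and the application of Theorem~\ref{genthm}---goes through.
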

\begin{proof}
We show that, in essence,  the proof of Theorem~\ref{rk2gen} survives evaluation.
Let $A$ denote the algebra generated by $\mathcal{B}':=\{x,\overline{f_1}, \overline{f_2},\overline{N}\}$.
Using Lemma~\ref{rank2-gen-lt}, $\mathcal{B}'$ is a SAGBI basis for $A$ and
the lead term algebra of $A$ is generated by $\{x,y^p,y^{p+2},z^{p^2}\}$.
Thus it is sufficient to show that $A[x^{-1}]=\field[V_M]^G[x^{-1}]$.
Note that $\overline{f_1}$ is degree 1 in $z$ with coefficient $-\og_{13}x^{p-1}$.
Hence $\field[x,\overline{f_1},N_G(y)][x^{-1}]=\field[V_M]^G[x^{-1}]$.
Therefore, to complete the proof, we need only show that $N_G(y)\in A[x^{-1}]$.

Using the notation of section~\ref{field_section}, $\of_{135}=\og_{13}N_G(y)$.
Recall that $f_1=f_{123}$ and that $\gamma_{12}f_{124}=2x^{p-2}f_2-f_1^2$.
Applying Lemma~\ref{plucker} to the subsequence $(1,4,5)$ of $(1,3,4,5)$ shows that
$\gamma_{34}\widetilde{f}_{135}$ is an $\fp$-linear combination of
$\gamma_{35}\widetilde{f}_{134}$  and $\gamma_{13}\widetilde{f}_{345}$.
However, $\gamma_{34}=\gamma_{12}^p$ , $\gamma_{35}=\gamma_{13}^p$,
and $\widetilde{f}_{345}=\widetilde{f}_{123}^p$.
Applying Lemma~\ref{plucker} to the subsequence $(2,3,4)$ of $(1,2,3,4)$ shows that
$\gamma_{12}\widetilde{f}_{134}$ is an $\fp$-linear combination of
$\gamma_{13}\widetilde{f}_{124}$  and $\gamma_{14}\widetilde{f}_{123}$.
Thus $\gamma_{12}^pf_{135}$ can be written as a polynomial in $x$, $f_1$ and $f_2$
with coefficients in $\fp[x_{ij}]$. Thus $N_G(y)\in A$, as required.
\end{proof}

For a representation with $\gamma_{13}=0$ and $\gamma_{12}\not=0$, define
$h:=\overline{f_2}/\left(\og_{12}x\right)$.
Working from the expression for $f_2$ given in the proof of Lemma~\ref{rank2-gen-lt} gives
$$h=\og_{23}y^{p+1}+\frac{1}{2}\left(\og_{12}xz^p+
x^{p-1}\left(\og_{14}\delta+\frac{\og_{23}^2y^2}{\og_{12}}\right)+\og_{24}yx^p\right).$$

\begin{theorem}  If $\gamma_{12}\not=0$ and $\gamma_{13}=0$ then
$\{x,N_G(y),h, N_G(z)\}$ is a SAGBI basis for $\field[V_M]^G$.
Furthermore, there is a single relation among the generators constructed by
subducting $\og_{23}^p N_G(y)^{p+1}-h^p$.
\end{theorem}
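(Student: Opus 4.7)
The plan is to verify the hypotheses of Theorem~\ref{sdx} for $\mathcal{B}:=\{x,N_G(y),h,N_G(z)\}$ and then check that the SAGBI/Divide-by-$x$ algorithm terminates at $\mathcal{B}$ itself, producing the claimed single relation from the unique non-trivial \tat.

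First, a short case analysis shows the hypotheses $\og_{12}\not=0$, $\og_{13}=0$ force $\og_{23}\not=0$: $\og_{13}=0$ makes $c_{11}$ and $c_{12}$ $\fp$-linearly dependent in $\field$, and then $\og_{23}=0$ would force $\og_{12}=0$. Consequently the $G$-orbit of $y$ has exactly $p$ elements (rather than $p^2$), so $\lm(N_G(y))=y^p$; and the displayed formula for $h$ has $\og_{23}y^{p+1}$ as its unique $x$-free monomial, so $\lm(h)=y^{p+1}$. Together with $\lm(x)=x$ and $\lm(N_G(z))=z^{p^2}$, this verifies the lead-monomial hypothesis of Theorem~\ref{sdx} with $h_1=N_G(y)$, $a_1=p$, $h_2=N_G(z)$, $a_2=p^2$.

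For the localization hypothesis, invert $x$ and introduce $u:=y/x$, $v:=-\delta/x^2$; then $\field[V_M][x^{-1}]=\field[x^{\pm 1},u,v]$ and $G$ acts by the translations $u\mapsto u+c_1$, $v\mapsto v+c_2$. The element $Y(u):=N_G(y)/x^p$ is the orbit product of $u$, a polynomial of degree $p$ in $u$ alone, and $H(u,v):=h/x^{p+1}$ is a polynomial of degree $p$ in $v$ with leading $v$-coefficient $\tfrac{1}{2}\og_{12}$ (coming from $\tfrac{1}{2}\og_{12}xz^p$ on using $(z/x)^p=u^{2p}+v^p$). Hence $Y$ and $H$ are algebraically independent and $[\field(u,v):\field(Y,H)]\le p^2=[\field(u,v):\field(u,v)^G]$, which forces $\field(u,v)^G=\field(Y,H)$; since $\field[Y,H]$ is a polynomial ring and $\field[u,v]^G$ is a finite integral extension with the same fraction field, $\field[u,v]^G=\field[Y,H]$. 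Tensoring with $\field[x^{\pm 1}]$ yields $\field[V_M]^G[x^{-1}]=\field[x,N_G(y),h][x^{-1}]\subseteq A[x^{-1}]$, where $A$ is the algebra generated by $\mathcal{B}$.

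Theorem~\ref{sdx} now applies, so SAGBI/Divide-by-$x$ on $\mathcal{B}$ produces a SAGBI basis for $\field[V_M]^G$; to finish I must show this algorithm terminates at $\mathcal{B}$ itself. Since $\gcd(p,p+1)=1$, the only primitive non-trivial \tat\ in the semigroup generated by $\{x,y^p,y^{p+1},z^{p^2}\}$ is $(N_G(y)^{p+1},h^p)$ at common lead $y^{p(p+1)}$, and matching lead coefficients starts the subduction from $\og_{23}^p N_G(y)^{p+1}-h^p$. The Frobenius identity gives $h^p\equiv \og_{23}^p y^{p(p+1)}\pmod{x^p}$ and $N_G(y)^{p+1}\equiv y^{p(p+1)}\pmod x$, so the difference is divisible by $x$; explicit subduction (expanding $h^p$ via Frobenius and $N_G(y)^{p+1}$ using the Lucas congruences for $\binom{p+1}{k}\pmod p$) reduces each successive lead monomial against $\mathcal{B}$ and terminates at zero. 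The main obstacle is the bookkeeping in this last step, where at every stage one must verify the current lead monomial still lies in $\langle x,y^p,y^{p+1},z^{p^2}\rangle$ so that no Divide-by-$x$ refinement is triggered; once this is checked the proof is complete.
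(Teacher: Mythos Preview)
Your localization argument via $u=y/x$, $v=-\delta/x^2$ is correct and is a pleasant alternative to the paper's route, which first normalizes $M$ via the $SL_2(\fp)$ and $\field\ltimes\field^*$ actions and then invokes Theorem~\ref{ffthm}. Your degree count giving $\field[u,v]^G=\field[Y,H]$ is valid and avoids that normalization entirely. Your verification that $\og_{23}\not=0$ (hence $\lm(h)=y^{p+1}$) is also fine.

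The genuine gap is the final step. Applying Theorem~\ref{sdx} only tells you that the SAGBI/Divide-by-$x$ algorithm started at $\mathcal{B}$ terminates at \emph{some} SAGBI basis $\mathcal{B}_j\supseteq\mathcal{B}$ for $\field[V_M]^G$; it does not tell you $\mathcal{B}_j=\mathcal{B}$. To conclude that, you must actually show the sole \tat\ $\og_{23}^pN_G(y)^{p+1}-h^p$ subducts to zero against $\mathcal{B}$. You describe this as ``bookkeeping'', but it is the computational heart of the theorem: after the first subtraction (a multiple of $x^{p-1}hN_G(y)^{p-1}$) one reaches an element with lead term a scalar times $x^p z^{p^2}$, and one must then control the difference with $x^p N_G(z)$. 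Nothing in your argument prevents a Divide-by-$x$ step from producing a new generator at this stage. Note also that $\{x,N_G(y),h\}$ is \emph{not} a SAGBI basis for the polynomial subalgebra it generates (compare Hilbert series), so the subduction is genuinely delicate.

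The paper avoids redoing this computation by specializing the generic calculation of Lemma~\ref{rank2-gen-lt}: evaluating $\widetilde{N}$ at $\gamma_{13}=0$ produces an explicit element $N'\in\field[x,N_G(y),h]$ with $\lt(N')=z^{p^2}$, so that $\mathcal{B}'=\{x,N_G(y),h,N'\}$ is visibly a SAGBI basis for its algebra; Theorem~\ref{genthm} then gives $\mathcal{B}'$ as a SAGBI basis for $\field[V_M]^G$, whence the lead term algebra is $\field[x,y^p,y^{p+1},z^{p^2}]$ and $\mathcal{B}$, having the same lead monomials, is also a SAGBI basis. To complete your proof along your own lines you would need to carry out the subduction explicitly, which amounts to reproducing that lemma in this special case.
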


\begin{proof}
For $\gamma_{12}\not=0$, we can choose generators for $G$ using the left $SL_2(\fp)$ action
and a basis for $V_M$ using the right $\field\ltimes\field^*$ action,  so that
$$M:=\left[\begin{array}{cc}
1&c_{12}\\
0&c_{22}
\end{array}\right]$$
and $\og_{12}=c_{22}$.
Since $\gamma_{13}=0$, we have $c_{12}^p=c_{12}$. Thus $c_{12}\in\fp$. Again using the $SL_2(\fp)$
action
to change generators, we can take
$$M:=\left[\begin{array}{cc}
1&0\\
0&c_{22}
\end{array}\right].$$
Hence $\deg(N_G(y))=p$ and $\deg(N_G(\delta))=2p$. Therefore, using
Theorem~\ref{ffthm}, we have $\field[V_M]^G[x^{-1}]=\field[x,N_G(y),N_G(\delta)][x^{-1}]$.
An explicit calculation gives $N_G(\delta)=\delta^p-\delta x^{2p-2}c_{22}^{p-1}$.
Evaluating gives $\of_{124}=-c_{22}N_G(\delta)$. Similarly ,
$\overline{f_1}=c_{22}N_G(y)$. Using the definition of $f_2$, we have
$\gamma_{12}f_{124}=2f_2x^{p-2}-f_1^2$. Thus $N_G(\delta)$
is in the algebra generated by
$\{x, h,N_G(y)\}$. Hence  $\field[x,N_G(y),h][x^{-1}]=\field[V_M]^G[x^{-1}]$.

Referring to the definition of $\widetilde{N}$ given above,
\begin{eqnarray*}
\overline{\widetilde{N}}&=&-\og_{12}^2\of_2^p
+\og_{12}\og_{23}^p \left(x^p\of_1^{p+1}-x^{2p-2}\of_2\of_1^{p-1}\right)\\
&=&-\og_{12}^{p+2}x^p h^p
+\og_{12}\og_{23}^p \left(x^p\of_1^{p+1}-x^{2p-1}\og_{12}h\of_1^{p-1}\right)\\
&=&x^p\og_{12}^{p+2}\left(\og_{23}^p N_G(y)^{p+1}-h^p\right)+\og_{12}^{p+1}\og_{23}^px^{2p-1}h N_G(y)\\
&=&x^p\og_{12}^{p+2}\left(\og_{23}^p N_G(y)^{p+1}-h^p+\og_{12}^{-1}\og_{23}^px^{2p-1}h N_G(y)\right).
\end{eqnarray*}
Therefore, using Lemma~\ref{rank2-gen-lt},  $N':=2\overline{\widetilde{N}}/\left(c_{22}x\right)^p$ has lead term
$z^{p^2}$. Define $\mathcal{B}':=\{x,N_G(y), h, N'\}$. Applying Theorem~\ref{genthm} shows that $\mathcal{B}'$
is a SAGBI basis for $\field[V_M]^G$. Thus the lead term algebra of $\field[V_M]^G$ is generated
by $\{x, y^p, y^{p+1}, z^{p^2}\}$. Therefore $\{x,N_G(y),h,N_G(z)\}$ is a SAGBI basis for
$\field[V_M]^G$. Note that while the right $\field\ltimes\field^*$ action results in a change of variables,
the lead term algebra is preserved.
\end{proof}

For a representation with $\gamma_{12}=0$ and $\gamma_{13}\not=0$, define
$d:=\overline{f_1}/x^{p-2}=\og_{13}\delta+\og_{23}yx$.

\begin{theorem}  If $\gamma_{13}\not=0$ and $\gamma_{12}=0$ then
$\{x,d,N_G(y), N_G(z)\}$ is a SAGBI basis for $\field[V_M]^G$.
Furthermore, there is a single relation among the generators constructed by
subducting $d^p-\og_{13}^2N_G(y)^2$.
\end{theorem}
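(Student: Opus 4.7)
The plan is to mirror the two preceding theorems in this section: use the right $\field\ltimes\field^*$ action on bases to reduce to the symmetric square case of Section~\ref{ss_sec}, then transport the SAGBI basis and hypersurface relation back to the original coordinates. Since $\og_{13}=c_{11}c_{12}^p-c_{12}c_{11}^p\neq 0$, both $c_{11}$ and $c_{12}$ are nonzero and $\fp$-linearly independent in $\field$. The hypothesis $\og_{12}=c_{11}c_{22}-c_{12}c_{21}=0$ gives $c_{22}=(c_{12}/c_{11})c_{21}$, so applying $(\gamma,1)\in\field\ltimes\field^*$ with $\gamma=-c_{21}/c_{11}$ simultaneously kills $c_{21}$ and $c_{22}$, presenting $V_M$ as a symmetric square representation for $W:=\fp\{c_{11},c_{12}\}$ of rank two.

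I would next work out the corresponding change of dual coordinates explicitly. Choosing the $3\times 3$ basis change with $m_{23}=0$, $m_{12}=-c_{21}/c_{11}$ and $m_{13}=0$, one obtains $x'=x$, $y'=y$ and $z'=z-(c_{21}/c_{11})y$. A short calculation using $\og_{12}=0$ yields the key identity $\og_{23}=\og_{13}c_{21}/c_{11}$, which in turn forces $\og_{13}\delta'=d$ as polynomials in $\field[x,y,z]$; in particular $d$ is $G$-invariant and is exactly $\og_{13}$ times the $\delta$-generator of the associated symmetric square representation. Theorem~\ref{spthm} applied in the new coordinates gives $\field[V_M]^G=\field[x,\delta',N_W(y'),N_W(z')]$, with this set a SAGBI basis satisfying the single relation of Lemma~\ref{rel_lem}.

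Finally, because $y'=y$ we have $N_W(y')=N_G(y)$, and because $N_G(z)$ is a $G$-invariant sharing the lead monomial $z^{p^2}$ with $N_W(z')$ in the grevlex order $z>y>x$, replacing $N_W(z')$ by $N_G(z)$ leaves both the subalgebra generated and its lead term algebra unchanged. Thus $\{x,d,N_G(y),N_G(z)\}$ is a SAGBI basis for $\field[V_M]^G$ with lead monomials $\{x,y^2,y^{p^2},z^{p^2}\}$, and the unique non-trivial \tat\ pairs $d^{p^2}$ with $N_G(y)^2$ (both of lead monomial $y^{2p^2}$); subducting via the relation of Lemma~\ref{rel_lem} after the substitution $\delta'=d/\og_{13}$ and rescaling by $\og_{13}^{p^2}$ produces the explicit hypersurface relation. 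The main obstacle I anticipate is verifying the identity $\og_{23}=\og_{13}c_{21}/c_{11}$ and checking that the change of variables preserves lead monomials in the grevlex order; once these are in hand, Theorem~\ref{genthm} closes the argument as in the preceding two theorems.
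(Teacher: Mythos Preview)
Your proposal is correct and follows essentially the same approach as the paper: both use the $\field\ltimes\field^*$ action to reduce $V_M$ to a symmetric square representation (the paper additionally uses the $SL_2(\fp)$ action to normalise $c_{11}=1$, but this is cosmetic), then invoke Theorem~\ref{spthm}. Your explicit verification that $\og_{23}=(c_{21}/c_{11})\og_{13}$ and hence $d=\og_{13}\delta'$ is exactly the content of the paper's terse remark that $d=\og_{13}\delta$ in the normalised coordinates, and your final step of swapping $N_W(z')$ for $N_G(z)$ via matching lead monomials is the standard SAGBI replacement used throughout the paper; note also that your identification of the \tat\ as $(d^{p^2},N_G(y)^2)$ is the correct one, since here $|W|=p^2$ (the exponents in the theorem statement appear to be misprints for $p^2$).
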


\begin{proof}
For $\gamma_{12}=0$ and $\gamma_{13}\not=0$, we can choose generators for $G$ using the left $SL_2(\fp)$ action
and a basis for $V_M$ using the right $\field\ltimes\field^*$ action,  so that
$$M:=\left[\begin{array}{cc}
1&c_{12}\\
0&0
\end{array}\right].$$
With this choice, $\og_{13}=c_{12}^p-c_{12}$, $\og_{23}=0$ and $d=\og_{12}\delta$.
Since $V_M$ is a symmetric square representation, the result follows from Theorem~\ref{spthm}.
\end{proof}

\begin{remark} A  representation with $\gamma_{13}=0$ and $\gamma_{12}=0$ is not faithful.
If the image of the representation is not equal to the identity, then representation is a rank one symmetric square representation
and, using Theorem~\ref{spthm}, the  ring of invariants is a hypersurface with generators in degrees $(1,2,p,p)$ 
and a relation in degree $2p$. This is a familiar example, first computed by Dickson \cite[Lecture III \S 7]{dickson_madison}.
\end{remark}

\section{The invariants for rank three dimension three}\label{r3d3_sec}

In this section $G=(\zp)^3=\langle e_1,e_2,e_3\rangle$. We start with the generic representation over
$\rfield:=\fp(x_{11},x_{12},x_{13},x_{21},x_{22},x_{23})$ given by $e_i\mapsto \sigma(x_{1i},x_{2,i})$. Thus, we assume $p>2$.
Consider $\mathcal{B}:=\{x,f_1,f_2,N_G(z)\}$ with $f_1$ and $f_2$ defined as in Section~\ref{field_section}:
\begin{eqnarray*}
f_1&=&-\gamma_{123}\delta^p-\gamma_{124}x^py^p-\gamma_{134}\delta x^{2p-2}-\gamma_{234}yx^{2p-1},\\
f_2&=&\gamma_{123}y^{p^2}-\gamma_{125}y^px^{p^2-p}-\gamma_{135}\delta x^{p^2-2}-\gamma_{235}yx^{p^2-1}.
\end{eqnarray*}
Thus $\lt(f_1)=-\gamma_{123}y^{2p}$ and $\lt(f_2)=\gamma_{123}y^{p^2}$. There is a single
non-trivial \tat: $(f_1^p,f_2^2)$.
\begin{lemma}
$$\lt\left(f_1^p+\gamma_{123}^{p-2}f_2^2+2(-\gamma_{123})^{(p-3)/2}\gamma_{125}x^{p^2-p}f_1^{(p+1)/2}\right)
=-2\gamma_{123}^{p-1}\gamma_{135}x^{p^2-2}y^{p^2+2}.$$
\end{lemma}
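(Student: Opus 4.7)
The plan is to identify the lead monomial by organising every contribution in the sum according to its $x$-exponent, exploiting that in the graded reverse lexicographic order with $z>y>x$, every monomial in sight has total degree $2p^2$, so within this fixed total degree a smaller $x$-exponent yields a larger monomial.

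First I would expand $f_1^p$ using the Frobenius endomorphism in characteristic $p$ and substitute $\delta^p=y^{2p}-x^pz^p$ and $\delta^{p^2}=y^{2p^2}-x^{p^2}z^{p^2}$; apart from $-\gamma_{123}^py^{2p^2}$, every resulting monomial will carry $x$-exponent at least $p^2$. Next I would write $f_2=\gamma_{123}y^{p^2}+R$ where $R:=-\gamma_{125}y^px^{p^2-p}-\gamma_{135}\delta x^{p^2-2}-\gamma_{235}yx^{p^2-1}$, and expand $\gamma_{123}^{p-2}f_2^2=\gamma_{123}^py^{2p^2}+2\gamma_{123}^{p-1}y^{p^2}R+\gamma_{123}^{p-2}R^2$. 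The $y^{2p^2}$ contribution cancels against $f_1^p$, and since every monomial in $R$ carries $x$-exponent at least $p^2-p$, every monomial in $R^2$ carries $x$-exponent at least $2p^2-2p$, which exceeds $p^2-2$ for $p>2$. The critical middle piece $2\gamma_{123}^{p-1}y^{p^2}R$ produces exactly one monomial of $x$-exponent strictly below $p^2-2$, namely $-2\gamma_{123}^{p-1}\gamma_{125}y^{p^2+p}x^{p^2-p}$ at $x$-exponent $p^2-p$, and a unique monomial $-2\gamma_{123}^{p-1}\gamma_{135}y^{p^2+2}x^{p^2-2}$ at $x$-exponent $p^2-2$ arising from expanding $\delta=y^2-xz$ inside $-2\gamma_{123}^{p-1}\gamma_{135}\delta y^{p^2}x^{p^2-2}$.

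The role of the third summand is then to annihilate the $x$-exponent $p^2-p$ contribution: from $\lt(f_1)=-\gamma_{123}y^{2p}$, the leading term of $2(-\gamma_{123})^{(p-3)/2}\gamma_{125}x^{p^2-p}f_1^{(p+1)/2}$ equals $2(-\gamma_{123})^{p-1}\gamma_{125}y^{p^2+p}x^{p^2-p}=2\gamma_{123}^{p-1}\gamma_{125}y^{p^2+p}x^{p^2-p}$, using that $p-1$ is even, and so cancels the offender exactly. The non-leading monomials of $f_1^{(p+1)/2}$ inherit $x$-exponent at least $p$, because the nonzero $x$-exponents of monomials in $f_1$ are $p,2p-2,2p-1$; hence after multiplying by $x^{p^2-p}$ all remaining contributions from the third summand sit at $x$-exponent at least $p^2>p^2-2$ and cannot interfere with the critical exponent. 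Thus $-2\gamma_{123}^{p-1}\gamma_{135}x^{p^2-2}y^{p^2+2}$ will be the unique monomial of minimum $x$-exponent in the full sum, and hence the claimed lead term.

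The main obstacle I anticipate is the bookkeeping: one must enumerate every monomial whose $x$-exponent could conceivably land at or below $p^2-2$, and check that no unexpected coincidence produces a further cancellation at the critical exponent $p^2-2$. Once this inventory is nailed down, the conclusion is immediate.
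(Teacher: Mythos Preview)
Your proposal is correct and follows essentially the same approach as the paper: the paper works modulo the ideal $\mathfrak{m}=(x^{p^2-1})$, which is exactly your strategy of tracking $x$-exponents and discarding every monomial with $x$-exponent at least $p^2-1$. Your write-up is in fact slightly more explicit than the paper's in justifying why $x^{p^2-p}f_1^{(p+1)/2}$ reduces to its lead-term contribution modulo $\mathfrak{m}$.
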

\begin{proof} We work modulo the ideal in $\rfield[x,y,z]$ generated by $x^{p^2-1}$, which we
denote by $\mathfrak{m}$. Using the formulae given above, $f_1^p\equiv_{\mathfrak{m}}-\gamma_{123}^py^{2p^2}$,
$x^{p^2-p}f_1\equiv_{\mathfrak{m}}-\gamma_{123}x^{p^2-p}y^{2p}$ and
$$f_2^2\equiv_{\mathfrak{m}}\gamma_{123}^2y^{2p^2}-2\gamma_{123}\gamma_{125}x^{p^2-p}y^{p^2+p}
-2\gamma_{123}\gamma_{135}x^{p^2-2}\delta y^{p^2}.
$$
Thus
$$f_1^p+\gamma_{123}^{p-2}f_2^2\equiv_{\mathfrak{m}}-2\gamma_{123}^{p-1}y^{p^2}\left(\gamma_{125}x^{p^2-p}y^p
+\gamma_{135}x^{p^2-2}\delta\right)$$
and
$$2(-\gamma_{123})^{(p-3)/2}\gamma_{125}x^{p^2-p}f_1^{(p+1)/2}\equiv_{\mathfrak{m}}2\gamma_{123}^{p-1}x^{p^2-p}y^{p^2+p}.$$
Hence
$$f_1^p+\gamma_{123}^{p-2}f_2^2+2(-\gamma_{123})^{(p-3)/2}\gamma_{125}x^{p^2-p}f_1^{(p+1)/2}\equiv_{\mathfrak{m}}
-2\gamma_{123}^{p-1}\gamma_{135}x^{p^2-2}y^{p^2}\delta$$
and the result follows.
\end{proof}
Define
$$f_3:=\frac{f_1^p+\gamma_{123}^{p-2}f_2^2+2\left(-\gamma_{123}\right)^{(p-3)/2}\gamma_{125}x^{p^2-p}f_1^{(p+1)/2}}
{-2x^{p^2-2}}$$
and $\mathcal{B}_1:=\mathcal{B}\cup\{f_3\}$. There is a single new non-trivial \tat\ among the elements of $\mathcal{B}_1$:
$(f_3^p,f_1f_2^p)$.

\begin{lemma}\label{f3lem} For $\mathfrak{h}:=(x^3,x^2y)\rfield[x,y,z]$,
$$f_3\equiv_{\mathfrak{h}}
\gamma_{123}^{p-1}\left(\gamma_{135}\delta y^{p^2}+\gamma_{235}xy^{p^2+1}-\frac{1}{2}\gamma_{123}x^2z^{p^2}\right).$$
\end{lemma}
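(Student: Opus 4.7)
The plan is to lift the congruence to the numerator of $f_3$: it suffices to compute
$$N := f_1^p + \gamma_{123}^{p-2}f_2^2 + 2(-\gamma_{123})^{(p-3)/2}\gamma_{125}x^{p^2-p}f_1^{(p+1)/2}$$
modulo $\mathfrak{J} := x^{p^2-2}\cdot \mathfrak{h} = (x^{p^2+1}, x^{p^2}y)\rfield[x,y,z]$; dividing the result by $-2x^{p^2-2}$ gives the claimed expression modulo $\mathfrak{h}$. So the plan is to compute each of the three summands of $N$ modulo $\mathfrak{J}$, then collect.

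For $f_1^p$, I will exploit the characteristic-$p$ Frobenius on the four-term expression for $f_1$: this distributes the $p$-th power across the sum and across $\delta^p$, giving
$$f_1^p = -\gamma_{123}^p(y^{2p^2}-x^{p^2}z^{p^2}) -\gamma_{124}^p x^{p^2}y^{p^2} -\gamma_{134}^p\delta^p x^{2p^2-2p} -\gamma_{234}^p y^p x^{2p^2-p}.$$
For $p\geq 3$, $2p^2-2p\geq p^2+1$, so the last two summands lie in $\mathfrak{J}$, and $x^{p^2}y^{p^2}\in (x^{p^2}y)\subset\mathfrak{J}$. Hence $f_1^p\equiv -\gamma_{123}^p y^{2p^2}+\gamma_{123}^p x^{p^2}z^{p^2}\pmod{\mathfrak{J}}$.

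For $f_2^2$, I write $f_2 = \gamma_{123}y^{p^2}+b$ where every term of $b$ carries $x^{p^2-p}$ or more; then $b^2\in (x^{2(p^2-p)})\subset\mathfrak{J}$ for $p\geq 3$, so only the cross term $2\gamma_{123}y^{p^2}\cdot b$ and the square of the leading term matter. For $f_1^{(p+1)/2}$ one needs the class modulo $(x^{p+1}, x^p y)$, since the multiplier $x^{p^2-p}$ will pull this into $\mathfrak{J}$. Reducing $f_1$ modulo $(x^{p+1},x^py)$ kills every summand except $-\gamma_{123}\delta^p$ (since $x^{2p-2}\in(x^{p+1})$ for $p\geq 3$ and $x^py^p\in(x^py)$), and Frobenius gives $\delta^p\equiv y^{2p}-x^pz^p$. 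Expanding the $(p+1)/2$ power of $-\gamma_{123}(y^{2p}-x^pz^p)$, the $k\geq 2$ terms vanish since $x^{pk}\in(x^{p+1})$, and the $k=1$ term is a multiple of $x^py^{p^2-p}z^p\in(x^py)$. Thus only the $k=0$ contribution survives, yielding
$$f_1^{(p+1)/2}\equiv (-\gamma_{123})^{(p+1)/2}y^{p(p+1)}\pmod{(x^{p+1},x^py)}.$$

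The key cancellation is then that the $\gamma_{125}$ contributions from $\gamma_{123}^{p-2}f_2^2$ and from $2(-\gamma_{123})^{(p-3)/2}\gamma_{125}x^{p^2-p}f_1^{(p+1)/2}$ add to zero (as $(-\gamma_{123})^{(p-3)/2+(p+1)/2}=\gamma_{123}^{p-1}$ since $p-1$ is even), and the two $\gamma_{123}^p y^{2p^2}$ contributions also cancel. What remains is
$$N\equiv \gamma_{123}^p x^{p^2}z^{p^2} + 2\gamma_{123}^{p-1}\bigl(-\gamma_{135}y^{p^2+2}x^{p^2-2}+\gamma_{135}y^{p^2}x^{p^2-1}z-\gamma_{235}y^{p^2+1}x^{p^2-1}\bigr)\pmod{\mathfrak{J}};$$
dividing by $-2x^{p^2-2}$ and using $y^{p^2+2}-y^{p^2}xz = y^{p^2}\delta$ produces the stated formula. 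The main potential obstacle is bookkeeping: tracking which coefficients carry which signs, matching exponents of $(-\gamma_{123})$, and verifying the ideal memberships for the various powers of $x$, especially the need $p\geq 3$ to ensure $2p^2-2p\geq p^2+1$ and $2p-2\geq p+1$.
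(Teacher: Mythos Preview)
Your argument is correct and follows essentially the same route as the paper's proof: both compute the numerator $-2x^{p^2-2}f_3$ modulo the ideal $(x^{p^2+1},x^{p^2}y)$ by reducing $f_1^p$, $f_2^2$, and $x^{p^2-p}f_1^{(p+1)/2}$ separately and then observing the cancellations. The only cosmetic difference is that the paper keeps the $\gamma_{135}$ term packaged as $\delta y^{p^2}$ throughout, whereas you expand $\delta=y^2-xz$ and recombine at the end; your additional remarks on the binomial expansion of $f_1^{(p+1)/2}$ and the explicit verification of the various ideal memberships (e.g.\ $2p^2-2p\ge p^2+1$ for $p\ge 3$) make the argument slightly more self-contained.
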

\begin{proof} We consider $x^{p^2-2}f_3$ modulo $\mathfrak{h}':=(x^{p^2+1},yx^{p^2})\rfield[x,y,z]$.
Using the definitions:
\begin{eqnarray*}
f_1^p&\equiv_{\mathfrak{h}'}&-\gamma_{123}^p\delta^{p^2}\equiv_{\mathfrak{h}'}-\gamma_{123}^p\left(y^{2p^2}-x^{p^2}z^{p^2}\right);\\
x^{p^2-p}f_1^{(p+1)/2}&\equiv_{\mathfrak{h}'}& x^{p^2-p}(-\gamma_{123})^{(p+1)/2}\delta^{p(p+1)/2}
\equiv_{\mathfrak{h}'} x^{p^2-p}(-\gamma_{123})^{(p+1)/2}y^{p^2+p};\\
f_2^2&\equiv_{\mathfrak{h}'}&
\gamma_{123}^2y^{2p^2}
-2\gamma_{123}x^{p^2-p}\left(\gamma_{125}y^{p^2+p}+\gamma_{135}x^{p-2}\delta y^{p^2}+\gamma_{235}x^{p-1}\delta y^{p^2+1}\right).
\end{eqnarray*}
Forming the appropriate linear combination gives
$$-2x^{p^2-2}f_3\equiv_{\mathfrak{h}'} \gamma_{123}^px^{p^2}z^{p^2}
-2\gamma_{123}^{p-1}x^{p^2-p}\left(\gamma_{135}x^{p-2}\delta y^{p^2}+\gamma_{235}x^{p-1}\delta y^{p^2+1}\right),$$
and the required description of $f_3$ modulo $\mathfrak{h}$ follows.
\end{proof}

\begin{lemma} \label{rank3-gen-lt} There exist $c_1,c_2,c_3\in\rfield$ such that
\begin{eqnarray*}
\widetilde{N}&:=&f_3^p+\gamma_{135}^p\gamma_{123}^{p^2-2p-1}f_1f_2^p-c_1x^pf_1^{(p^2+1)/2}
-c_2x^{2p-2}f_2^{p-1}f_3\\
&&-c_3x^{2p-1}f_3^{(p+1)/2}f_2^{(p-3)/2}f_1^{(p-1)/2}
\end{eqnarray*}
has lead term $-\frac{1}{2}\gamma_{123}^{p^2}x^{2p}z^{p^3}$.
\end{lemma}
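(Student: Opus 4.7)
The plan is to mirror the proof of Lemma~\ref{rank2-gen-lt} one level up, working modulo a suitable ideal of $\rfield[x,y,z]$. Since $\widetilde{N}$ is homogeneous of degree $p^3+2p$ and the grevlex order with $x<y<z$ favours small $x$-exponents, proving $\lt(\widetilde{N})=-\tfrac{1}{2}\gamma_{123}^{p^2}x^{2p}z^{p^3}$ amounts to showing that every homogeneous monomial of degree $p^3+2p$ with $x$-exponent strictly less than $2p$ cancels from $\widetilde{N}$. The natural place to work is therefore the ideal $\mathfrak{n}:=(x^{2p+1},yx^{2p})\rfield[x,y,z]$, which kills the monomials that cannot contribute to the lead while preserving both the target monomial $x^{2p}z^{p^3}$ and the candidate obstruction monomials.

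First I would verify the cancellation of the dominant contribution $y^{p^3+2p}$: one has $\lt(f_3^p)=\gamma_{123}^{p^2-p}\gamma_{135}^p y^{p^3+2p}$ while $\lt(f_1 f_2^p)=-\gamma_{123}^{p+1} y^{p^3+2p}$, so the coefficient $\gamma_{135}^p\gamma_{123}^{p^2-2p-1}$ that multiplies $f_1 f_2^p$ in the definition of $\widetilde{N}$ is precisely what is needed to annihilate this leading contribution. Next I would compute $f_3^p$ modulo $\mathfrak{n}$ by invoking Lemma~\ref{f3lem} and Frobenius linearity: since $(x^{3p},x^{2p}y^p)\subseteq\mathfrak{n}$ for $p\ge 3$, raising the congruence of Lemma~\ref{f3lem} to the $p$th power yields
\[
f_3^p\equiv\gamma_{123}^{p^2-p}\bigl(\gamma_{135}^p\delta^p y^{p^3}+\gamma_{235}^p x^p y^{p^3+p}-\tfrac{1}{2}\gamma_{123}^p x^{2p}z^{p^3}\bigr)\pmod{\mathfrak{n}},
\]
which simplifies further using $\delta^p=y^{2p}-x^pz^p$. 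For $f_1 f_2^p$, only the $\gamma_{123}^p y^{p^3}$ summand of $f_2^p$ survives modulo $\mathfrak{n}$ because the other three summands carry $x$-exponents at least $p^3-p^2$, which exceeds $2p$ once $p\ge 3$; multiplying this surviving summand by the full expansion of $f_1$ produces a short list of monomials of the form $x^k y^\ell$ with $k<2p$.

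Third I would read off the lead monomials of the three correction terms,
\[
\lt\bigl(x^p f_1^{(p^2+1)/2}\bigr)\propto x^p y^{p^3+p},\quad \lt\bigl(x^{2p-2}f_2^{p-1}f_3\bigr)\propto x^{2p-2}y^{p^3+2},\quad \lt\bigl(x^{2p-1}f_3^{(p+1)/2}f_2^{(p-3)/2}f_1^{(p-1)/2}\bigr)\propto x^{2p-1}y^{p^3+1},
\]
which are precisely the obstruction monomials produced in the second step. The coefficients $c_1$, $c_2$, $c_3$ are therefore determined by a triangular linear system that can be solved by successive coefficient matching, eliminating the obstructions in order of decreasing $y$-exponent; once these three classes of obstruction have been killed, the only remaining contribution modulo $\mathfrak{n}$ is the $-\tfrac{1}{2}\gamma_{123}^{p^2}x^{2p}z^{p^3}$ coming from $f_3^p$.

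The main obstacle is the bookkeeping in the second step. The product $f_1 f_2^p$ contributes several sub-leading monomials modulo $\mathfrak{n}$, one for each summand of $f_1$, and the $\delta^p y^{p^3}$ piece in the expansion of $f_3^p$ splits further via $\delta^p=y^{2p}-x^pz^p$. One must verify that every surviving monomial of $x$-exponent less than $2p$ is proportional to one of the three obstructions listed above and does not produce any new obstruction of smaller $x$-exponent; a pleasant feature is that any $\delta$-factor accompanying $x^{2p-2}$ is automatically absorbed into the $y^{p^3+2}$ class modulo $\mathfrak{n}$. Once this verification is complete, solving the triangular system for $c_1$, $c_2$, $c_3$ and concluding $\lt(\widetilde{N})=-\tfrac{1}{2}\gamma_{123}^{p^2}x^{2p}z^{p^3}$ is a routine calculation entirely parallel to the argument in Lemma~\ref{rank2-gen-lt}.
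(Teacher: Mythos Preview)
Your approach is essentially the paper's own: work modulo $\mathfrak{n}=(x^{2p+1},yx^{2p})$, reduce $f_2^p$ to $\gamma_{123}^py^{p^3}$, compute $f_3^p$ from Lemma~\ref{f3lem} via Frobenius, list the surviving obstruction classes, observe that the three correction terms hit exactly those classes, and solve for $c_1,c_2,c_3$. That is precisely the structure of the paper's proof.

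One point needs correction. You write that ``any $\delta$-factor accompanying $x^{2p-2}$ is automatically absorbed into the $y^{p^3+2}$ class modulo~$\mathfrak{n}$,'' and earlier that $f_1\cdot y^{p^3}$ produces only monomials of the form $x^ky^{\ell}$. Neither is true as stated: since $x^{2p-2}\delta y^{p^3}=x^{2p-2}y^{p^3+2}-x^{2p-1}zy^{p^3}$ and $x^{2p-1}zy^{p^3}\notin\mathfrak{n}$, expanding $\delta$ introduces a $z$-monomial which in grevlex is \emph{larger} than the target $x^{2p}z^{p^3}$ (likewise $\delta^p y^{p^3}$ hides an $x^pz^py^{p^3}$). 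The paper sidesteps this by never expanding $\delta$: it records the obstruction as $\tilde c_2\,x^{2p-2}\delta y^{p^3}$ and notes that the $c_2$-correction term $x^{2p-2}f_2^{p-1}f_3$ is congruent modulo~$\mathfrak{n}$ to a nonzero multiple of $x^{2p-2}\delta y^{p^3}$ plus a multiple of $x^{2p-1}y^{p^3+1}$, so one value of $c_2$ kills the whole $\delta$-package at once (and the $\delta^p y^{p^3}$ pieces in $f_3^p$ and in $\gamma_{135}^p\gamma_{123}^{p^2-2p-1}f_1f_2^p$ cancel outright). If you insist on expanding $\delta$, the argument still closes---the extra $z$-monomials always arrive paired with their $y$-partners and cancel simultaneously---but you must track them explicitly rather than declare them absorbed.
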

\begin{proof} We work modulo the ideal in $\rfield[x,y,z]$ generated by $x^{2p+1}$ and $yx^{2p}$,
which we denote by $\mathfrak{n}$. By definition, $f_2^p\equiv_{\mathfrak{n}}\gamma_{123}^py^{p^3}$.
Thus
$$
f_3^p+\gamma_{135}^p\gamma_{123}^{p^2-2p-1}f_1f_2^p\equiv_{\mathfrak{n}}f_3^p-\gamma_{135}^p\gamma_{123}^{p^2-p-1}f_1y^{p^3}.$$
Using Lemma~\ref{f3lem} and the definition of $f_1$ gives
\begin{eqnarray*}
f_3^p+\gamma_{135}^p\gamma_{123}^{p^2-2p-1}f_1f_2^p &\equiv_{\mathfrak{n}}&
\gamma_{123}^{p^2-p-1}\left(\gamma_{123}\left(\gamma_{235}^px^py^{p^3+p}-\frac{1}{2}\gamma_{123}^px^{2p}z^{p^3}\right)\right.\\
& - &
\gamma_{135}^py^{p^3}\left(\left.\gamma_{124}x^py^p+\gamma_{134}\delta x^{2p-2}+\gamma_{234}yx^{2p-1}\right)\right)\\
&\equiv_{\mathfrak{n}}& \tilde{c_1}x^py^{p^3+p}+\tilde{c_2}x^{2p-2}\delta y^{p^3}+\tilde{c_3}x^{2p-1}y^{p^3+1}
-\frac{1}{2}\gamma_{123}^{p^2}x^{2p}z^{p^3}
\end{eqnarray*}
for some $\tilde{c_i}\in\rfield$.
Using the definitions and Lemma~\ref{f3lem}, we have
\begin{eqnarray*}
x^pf_1^{(p^2+1)/2}&\equiv_{\mathfrak{n}}& (-\gamma_{123})^{(p^2+1)/2}x^py^{p^3+p},\\
x^{2p-2}f_2^{p-1}f_3 &\equiv_{\mathfrak{n}}& \gamma_{123}^{2p-2}\left(x^{2p-2}\gamma_{135}\delta y^{p^3} +\gamma_{235}x^{2p-1}y\right),\\
{\rm and}\ x^{2p-1}f_3^{(p+1)/2}f_2^{(p-3)/2}f_1^{(p-1)/2} &\equiv_{\mathfrak{n}}& (-1)^{(p-1)/2}\beta x^{2p-1}y^{p^3+1},
\end{eqnarray*}
with $\beta$ a monomial in $\gamma_{123}$ and $\gamma_{135}$. We then solve for the required $c_i$.
\end{proof}

\begin{theorem}\label{rk3gen}
For the generic rank three representation, $\mathcal{B}=\{x,f_1,f_2,f_3,N_G(z)\}$ is a SAGBI basis for $\rfield[V_3]^G$.
Furthermore, there are two relations among the generators, one constructed by subducting
$f_3^p-\gamma_{135}^p\gamma_{123}^{p^2-2p-1}f_1f_2^p$ and the other given by
$$f_1^p+\gamma_{123}^{p-2}f_2^2+2\left(-\gamma_{123}\right)^{(p-3)/2}\gamma_{125}x^{p^2-p}f_1^{(p+1)/2}
+2x^{p^2-2}f_3=0.$$
\end{theorem}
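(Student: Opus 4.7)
The plan is to follow the template of the proof of Theorem \ref{rk2gen}, turning the subduction of each non-trivial \tat\ into a relation among the generators. Set $N := \widetilde{N}/x^{2p}$; by Lemma \ref{rank3-gen-lt}, $N \in \rfield[V_3]^G$ with $\lt(N) = -\frac{1}{2}\gamma_{123}^{p^2} z^{p^3}$. I first show that the auxiliary set $\mathcal{B}' := \{x, f_1, f_2, f_3, N\}$ is a SAGBI basis for $\rfield[V_3]^G$, then transfer the conclusion to $\mathcal{B}$ via the equality of lead monomials.

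The lead monomials are $\lm(\mathcal{B}') = \{x, y^{2p}, y^{p^2}, y^{p^2+2}, z^{p^3}\}$. Because $x$ and $z^{p^3}$ involve variables disjoint from the $y$-monomials, every non-trivial \tat\ must match lead monomials purely in $y$, and hence involves only $\{f_1, f_2, f_3\}$. A routine semigroup analysis of $\{2p, p^2, p^2+2\}$ (using $p$ odd) shows that up to combinations it suffices to subduct the two \tat s $(f_1^p, f_2^2)$ and $(f_3^p, f_1 f_2^p)$. The first subducts to $0$ by the very definition of $f_3$, yielding the explicit relation $f_1^p + \gamma_{123}^{p-2} f_2^2 + 2(-\gamma_{123})^{(p-3)/2} \gamma_{125} x^{p^2-p} f_1^{(p+1)/2} + 2 x^{p^2-2} f_3 = 0$ displayed in the theorem. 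The second subducts to $0$ by Lemma \ref{rank3-gen-lt}, which writes $f_3^p + \gamma_{135}^p \gamma_{123}^{p^2-2p-1} f_1 f_2^p$ as a polynomial in $\{x, f_1, f_2, f_3\}$ plus the term $x^{2p} N$, absorbed by the generator $N$. Hence $\mathcal{B}'$ is a SAGBI basis for the algebra $A'$ it generates.

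I next apply Theorem \ref{fieldthm} to obtain $A'[x^{-1}] = \rfield[x, f_1, f_2][x^{-1}] = \rfield[V_3]^G[x^{-1}]$, and observe that $\{x, f_1, N\} \subset A'$ has lead monomials $\{x, y^{2p}, z^{p^3}\}$, which form a homogeneous system of parameters for $\rfield[V_3]$; consequently $\rfield[V_3]^G$ is integral over $A'$. Theorem \ref{genthm} then gives $A' = \rfield[V_3]^G$, so $\mathcal{B}'$ is a SAGBI basis for $\rfield[V_3]^G$. Replacing $N$ by $N_G(z)$ preserves the lead monomial $z^{p^3}$, whence $\lm(\mathcal{B}) = \lm(\mathcal{B}')$ and $\mathcal{B}$ is also a SAGBI basis. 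The remaining relation of the theorem (the subduction of $f_3^p - \gamma_{135}^p \gamma_{123}^{p^2-2p-1} f_1 f_2^p$) is obtained by substituting $N_G(z)$ for $N$ in the identity supplied by Lemma \ref{rank3-gen-lt}.

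The main obstacle I anticipate is bookkeeping rather than conceptual: the enumeration of non-trivial \tat s in the semigroup generated by $\{2p, p^2, p^2+2\}$ needs an explicit finite verification, and converting the identity through $N$ into a relation in terms of $N_G(z)$ demands careful tracking of the leading coefficient $-\frac{1}{2}\gamma_{123}^{p^2}$ from Lemma \ref{rank3-gen-lt} alongside the secondary terms of $\widetilde{N}$.
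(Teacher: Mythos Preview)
Your proposal is correct and follows essentially the same route as the paper: define $N=\widetilde{N}/x^{2p}$, verify that $\mathcal{B}'=\{x,f_1,f_2,f_3,N\}$ is a SAGBI basis for the algebra it generates via the two \tat s $(f_1^p,f_2^2)$ and $(f_3^p,f_1f_2^p)$, invoke Theorem~\ref{fieldthm} and Theorem~\ref{genthm} to get $A'=\rfield[V_3]^G$, and then swap $N$ for $N_G(z)$ by equality of lead monomials. Your explicit remarks on the semigroup enumeration and the hsop $\{x,f_1,N\}$ simply make precise what the paper leaves implicit.
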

\begin{proof} Define $N:=\widetilde{N}/x^{2p}$.
It follows from Lemma~\ref{rank3-gen-lt} and the definition of $f_3$ that
$\mathcal{B}':=\{x,f_1,f_2,,f_3,N\}$ is a SAGBI basis for the algebra it generates.
Applying Theorem~\ref{fieldthm} and Theorem~\ref{genthm} proves that $\mathcal{B}'$ is a SAGBI basis for $\rfield[V_3]^G$.
Thus the lead term algebra of $\rfield[V_3]^G$ is generated by $\{x,y^{2p},y^{p^2},y^{p^2+2}, z^{p^3}\}$.
Since $\lm(\mathcal{B})=\{x,y^{2p},y^{p^2},y^{p^2+2}, z^{p^3}\}$, $\mathcal{B}$ is also
a SAGBI basis for $\rfield[V_3]^G$. The two non-trivial \tat s subduct to produce the relations.
\end{proof}

For $\field$ an arbitrary field of characteristic $p$, consider the representation $V_M$  for
$$M:=\left[\begin{array}{ccc}
c_{11}&c_{12}&c_{13}\\
c_{21}&c_{22}&c_{23}
\end{array}\right]$$
with $c_{ij}\in\field$, i.e., $e_i\mapsto \sigma(c_{1i},c_{2i})$.
Let $\psi_M:\fp[x_{ij}]\to\field$ denote the evaluation map: $\psi_M(x_{ij})=c_{ij}$.
For $f\in\rfield[x,y,z]$, let $\of$ denote the element of $\field[x,y,z]$
constructed by minimally clearing denominators and applying $\psi_M$ to the coefficients.
Note that we can also interpret elements of $\fp[x_{ij}]$ as regular functions on $\field^{2\times 3}$,
the space of representations of the given type.

\begin{theorem} If $\gamma_{123}\not=0$ and $\gamma_{135}\not=0$, then
$\{x,\of_1,\of_2,\of_3,N_G(z)\}$ is a SAGBI basis for $\field[V_M]^G$.
Furthermore,  there are two relations among the generators, one constructed by subducting
$\of_3^p-\og_{135}^p\og_{123}^{p^2-2p-1}\of_1\of_2^p$ and the other given by
$$\of_1^p+\og_{123}^{p-2}\of_2^2+2\left(-\og_{123}\right)^{(p-3)/2}\og_{125}x^{p^2-p}\of_1^{(p+1)/2}
+2x^{p^2-2}\of_3=0.$$
\end{theorem}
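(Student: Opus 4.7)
The strategy is to run the proof of Theorem~\ref{rk3gen} after evaluation at $M$: the two hypotheses $\og_{123}\ne 0$ and $\og_{135}\ne 0$ are precisely the non-vanishing conditions needed for the key leading-coefficient computations to survive $\psi_M$. First, apply $\psi_M$ to the identity of Lemma~\ref{rank3-gen-lt}; since the scalar $-\frac{1}{2}\gamma_{123}^{p^2}$ does not vanish under evaluation, $\overline{\widetilde N}$ has lead term $-\frac{1}{2}\og_{123}^{p^2}x^{2p}z^{p^3}$. Define $\overline N := \overline{\widetilde N}/x^{2p} \in \field[V_M]^G$ and $\mathcal{B}' := \{x,\of_1,\of_2,\of_3,\overline N\}$. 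The condition $\og_{123}\ne 0$ also preserves the leading coefficients of $\of_1, \of_2, \of_3$ from the generic case, so $\lm(\mathcal{B}') = \{x, y^{2p}, y^{p^2}, y^{p^2+2}, z^{p^3}\}$, matching the generic situation.

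I would next verify that $\mathcal{B}'$ is a SAGBI basis for the algebra $A$ it generates. The only non-trivial t\^ete-\`a-t\^etes are $(\of_1^p,\of_2^2)$ and $(\of_3^p,\of_1\of_2^p)$, and by construction each subducts to zero using the evaluated forms of the two generic relations stated in Theorem~\ref{rk3gen} (the defining identity of $f_3$ evaluated, and the expression for $\widetilde N$ evaluated). To apply Theorem~\ref{genthm}, I would then check that $A[x^{-1}] = \field[V_M]^G[x^{-1}]$. The polynomial $\of_2$ is degree $1$ in $z$ with $z$-coefficient a nonzero scalar multiple of $\og_{135}x^{p^2-1}$, so once $x$ is inverted we can solve for $z$ and obtain $\field[V_M][x^{-1}] = \field[x,y,\of_2][x^{-1}]$. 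Together with the observation that $\og_{123}\ne 0$ forces the $G$-orbits of $y$ and of $\delta$ to be of full size $p^3$ (so that the hypothesis $\deg(N_G(y))\deg(N_G(\delta)) = 2|G|$ of Theorem~\ref{ffthm} holds), this yields the required equality of localised rings. Theorem~\ref{genthm} then gives $A = \field[V_M]^G$ and $\mathcal{B}'$ is a SAGBI basis. Finally, since $\lm(\overline N) = \lm(N_G(z)) = z^{p^3}$, replacing $\overline N$ by $N_G(z)$ produces another SAGBI basis, namely $\{x,\of_1,\of_2,\of_3,N_G(z)\}$; subducting the two non-trivial t\^ete-\`a-t\^etes gives the two displayed relations.

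The main obstacle is the non-degeneracy step $A[x^{-1}] = \field[V_M]^G[x^{-1}]$: one must argue carefully that $\og_{123}\ne 0$ together with $\og_{135}\ne 0$ suffice to guarantee the $\fp$-linear independence properties of the entries of $M$ needed to apply Theorem~\ref{ffthm} (or alternatively, the evaluated form of Lemma~\ref{biggen}). Verifying the subduction of the two t\^ete-\`a-t\^etes is largely bookkeeping once the generic relations are known, but one must track how the coefficients involving $\og_{ijk}$ behave and confirm no denominator introduced in the generic construction of $f_2,f_3,\widetilde N$ is killed under $\psi_M$; this is guaranteed by the two standing non-vanishing assumptions.
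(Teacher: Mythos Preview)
Your overall architecture matches the paper's: evaluate the generic identities, check the lead-term arithmetic is preserved by the two non-vanishing assumptions, verify the subductions, then invoke Theorem~\ref{genthm}. That part is fine. The gap is in the localisation step $A[x^{-1}]=\field[V_M]^G[x^{-1}]$.

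Your plan there is to invoke Theorem~\ref{ffthm}, but its hypothesis $\deg(N_G(y))\cdot\deg(N_G(\delta))=2|G|$ does \emph{not} hold under the assumptions $\og_{123}\ne0$, $\og_{135}\ne0$. Recall that the orbit of $y$ is $\{y+cx:c\in W_1\}$ with $W_1=\mathrm{Span}_{\fp}\{c_{11},c_{12},c_{13}\}$, and the orbit of $\delta$ is $\{\delta-cx^2:c\in W_2\}$ with $W_2=\mathrm{Span}_{\fp}\{c_{21},c_{22},c_{23}\}$; so $\deg(N_G(y))=|W_1|$ and $\deg(N_G(\delta))=2|W_2|$, and the hypothesis of Theorem~\ref{ffthm} reads $\dim_{\fp}W_1+\dim_{\fp}W_2=3$. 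Now $\og_{135}$ is precisely the Moore determinant of the first row of $M$, so $\og_{135}\ne0$ forces $\dim_{\fp}W_1=3$; the hypothesis would then require $W_2=0$, i.e.\ the symmetric-square case only. For a general $M$ with $\og_{123},\og_{135}\ne0$ (indeed already for the generic $M$ over $\rfield$) the product $\deg(N_G(y))\deg(N_G(\delta))$ equals $2p^6$, not $2p^3$, and Theorem~\ref{ffthm} is inapplicable. Even were it applicable, you would still owe an argument that $N_G(y)$ and $N_G(\delta)$ lie in $A$.

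The paper closes this step differently: from the observation that $\of_2$ is linear in $z$ with $z$-coefficient $\og_{135}x^{p^2-1}$ it gets $\field[V_M]^G[x^{-1}]=\field[x,\of_2,N_G(y)][x^{-1}]$ (via \cite{CampChuai}), and then it proves $N_G(y)\in A$ by writing $\og_{135}N_G(y)=\of_{1357}$ and iteratively applying the Pl\"ucker relation of Lemma~\ref{plucker} (exactly as in the proof of Lemma~\ref{powers}) to express $\gamma_{123}^p\widetilde f_{1357}$ as an $\fp[x_{jk}]$-combination of powers of $\widetilde f_{1234}$ and $\widetilde f_{1235}$, surviving evaluation because only $\og_{123}$ and $\og_{135}$ appear in denominators. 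You should replace the appeal to Theorem~\ref{ffthm} with this Pl\"ucker argument (or some other proof that $N_G(y)\in\field[x,\of_1,\of_2]$).
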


\begin{proof} We show that, in essence,  the proof of Theorem~\ref{rk3gen} survives evaluation.
Referring to the proof of Lemma~\ref{rank3-gen-lt}, we see that
$\tilde{c}_i\in \fp[x_{jk}]$ and that we can solve for the $c_i$ after inverting $\gamma_{123}$
and $\gamma_{135}$. Therefore, with  $N:=\widetilde{N}/x^{2p}$, we have
$\lm(\overline{N})=z^{p^3}$.
Let $A$ denote the algebra generated by $\mathcal{B}':=\{x,\of_1, \of_2,\of_3,\overline{N}\}$.
Since $\gamma_{123}\not=0$, we have
$\lm(\of_1)=y^{2p}$ and $\lm(\of_2)=y^{p^2}$. Using Lemma~\ref{f3lem},
$\lm(\of_3)=y^{p^2+2}$. The \tat\ $(\of_1^p,\of_2^2)$ subducts to zero using the definition of $\of_3$
and the \tat\ $(\of_3^p,\of_1\of_2^p)$ subducts to zero using the definition  of $\overline{N}$.
Thus $\mathcal{B}'$ is a SAGBI basis for $A$.

Referring to the definition of $f_2$, we see that $\of_2$ has degree $1$ as a polynomial in $z$ with coefficient
$\og_{135}x^{p^2-2}$. Thus $\field[x,\of_2, N_G(y)][x^{-1}]=\field[V_M]^G[x^{-1}]$.
Using the notation of section~\ref{field_section}, observe that $N_G(y)=\og_{135}\of_{1357}$.
By definition $f_1=f_{1234}$ and $f_2=f_{1235}$. Applying Lemma~\ref{plucker} to
the subsequence $(1,4,7)$ of $(1,3,4,5,7)$ shows that $\gamma_{345}\widetilde{f}_{1357}$
can be written as an $\fp$-linear combination of
$\gamma_{135}\widetilde{f}_{3457}$ and $\gamma_{357}\widetilde{f}_{1345}$.
Observe that $\gamma_{345}=\gamma_{123}^p$ and $\widetilde{f}_{3457}=\widetilde{f}_{1235}^p $ .
Iteratively applying Lemma~\ref{plucker} as in the proof of Lemma~\ref{powers} allows us to write
$\gamma_{123}^p\widetilde{f}_{1357}$ as an element of the $\fp[x_{jk}]$-span
of $\{\widetilde{f}_{1234}^{p^i},\widetilde{f}_{1235}^{p^i}\mid i=0,1\}$.
Thus $N_G(y)$ is in the algebra generated by $\{x,\of_1,\of_2\}$.
Therefore $A[x^{-1}]=\field[V_M]^G[x^{-1}]$ and the result follows from Theorem~\ref{genthm}.
\end{proof}

For a representation with $\gamma_{123}\not=0$ and $\gamma_{135}=0$, arguing as in the
proof of Lemma~\ref{f3lem}, 
$\of_3/x$ is equivalent  to $\og_{123}^{p-1}\left(\og_{235}y^{p^2+1}-\og_{123}xz^{p^2}\right)$, modulo the ideal
$(x^2,xy)\field[x,y,z]$.
The Pl\"ucker relation for $\Gamma$  determined by the sequences $(1,3)$ and $(2,3,4,5)$ gives
$\gamma_{123}\gamma_{345}-\gamma_{134}\gamma_{235}+\gamma_{135}\gamma_{234}=0$.
Since $\gamma_{123}^p=\gamma_{345}$ and $\og_{135}=0$, we have
$\og_{134}\og_{235}=\og_{123}^{p+1}\not=0$. Hence $\lm(\of_3/x)=y^{p^2+1}$.

\begin{theorem} If $\gamma_{123}\not=0$ and $\gamma_{135}=0$,
then $\{x,\of_1,N_G(y),\of_3/x,N_G(z)\}$ is a SAGBI basis for $\field[V_M]^G$.
Furthermore, there are two relations among the generators
constructed by subducting the \tat s 
$\left((\of_3/x)^p,\of_1^{(p^2+1)/2}\right)$ and $(\of_1^p,N_G(y)^2)$.
\end{theorem}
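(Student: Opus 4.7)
The plan mirrors the rank-three strategy of Theorem \ref{rk3gen}, specialized to the locus $\og_{135}=0$. First, I establish the lead monomials. Since $\og_{123}\neq 0$ we have $\lm(\of_1)=y^{2p}$; using the Pl\"ucker identity $\og_{134}\og_{235}=\og_{123}^{p+1}$ recorded just before the theorem, both $\og_{134}$ and $\og_{235}$ are nonzero. Combined with $\og_{135}=0$ this forces the $\fp$-span of $\{c_{11},c_{12},c_{13}\}$ to have dimension exactly two, so the orbit of $y$ has size $p^2$ and $\lm(N_G(y))=y^{p^2}$. The calculation preceding the theorem gives $\lm(\of_3/x)=y^{p^2+1}$, and $\lm(N_G(z))=z^{p^3}$.

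A key simplification at $\og_{135}=0$ is that the defining formula collapses to $\of_2=\og_{123}y^{p^2}-\og_{125}y^px^{p^2-p}-\og_{235}yx^{p^2-1}\in\field[x,y]^G=\field[x,N_G(y)]$. In degree $p^2$ this subalgebra is spanned by $\{x^{p^2}, N_G(y)\}$, and comparing leading coefficients yields the clean identity $\of_2=\og_{123}N_G(y)$. This lets me transfer the structural relations of Theorem \ref{rk3gen} directly into the present setting.

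The syzygies $2p^2=p\cdot 2p=2\cdot p^2$ and $p^3+p=\tfrac{p^2+1}{2}\cdot 2p=p(p^2+1)$ give exactly the two \tat s $(\of_1^p, N_G(y)^2)$ and $((\of_3/x)^p, \of_1^{(p^2+1)/2})$; every other syzygy in the submonoid of $\bbN$ generated by $\{2p, p^2, p^2+1\}$ factors through these. To subduct the first, I substitute $\of_2=\og_{123}N_G(y)$ and $\of_3=x(\of_3/x)$ into the relation from Theorem \ref{rk3gen}, obtaining $\of_1^p+\og_{123}^pN_G(y)^2+2(-\og_{123})^{(p-3)/2}\og_{125}x^{p^2-p}\of_1^{(p+1)/2}+2x^{p^2-1}(\of_3/x)=0$, which is the first relation. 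For the second, I make the same substitutions in the expansion of $\widetilde{N}$ from Lemma \ref{rank3-gen-lt}; the term $\og_{135}^p\og_{123}^{p^2-2p-1}\of_1\of_2^p$ vanishes, and dividing by $x^{2p}$ produces an identity whose lead term is a scalar multiple of $z^{p^3}$, giving the second relation after subduction against $N_G(z)$.

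Finally I apply Theorem \ref{sdx} with $h_1=\of_1$ and $h_2=N_G(z)$ supplying the required pure powers of $y$ and $z$. The main obstacle will be verifying $A[x^{-1}]=\field[V_M]^G[x^{-1}]$: since the $\fp$-rank of $\{c_{21},c_{22},c_{23}\}$ is not controlled by our hypotheses, Theorem \ref{ffthm} may not apply directly. I would handle this by passing to the coordinates $u=y/x$ and $w'=-\delta/x^2$, in which $G$ acts as translations $u\mapsto u+c_{1i}$, $w'\mapsto w'+c_{2i}$, so that $\field[V_M]^G[x^{-1}]=\field[x^{\pm 1},u,w']^C$ for $C$ the image of $G$ in $\field^2$. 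The translation invariants are then exhibited inside $A[x^{-1}]$ using $N_G(y)/x^{p^2}$ together with the explicit expansion $\of_3/x\equiv\og_{123}^{p-1}(\og_{235}y^{p^2+1}-\og_{123}xz^{p^2})\pmod{(x^2,xy)}$, after which Theorem \ref{genthm} closes the argument.
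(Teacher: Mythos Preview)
Your reduction $\of_2=\og_{123}N_G(y)$ and the subduction of the first \tat\ via the specialized relation from Theorem~\ref{rk3gen} are correct and match the paper's approach. The gap is in your handling of the second \tat. The coefficients $c_1,c_2,c_3$ in Lemma~\ref{rank3-gen-lt} are only asserted to exist in $\rfield$; as the paper notes in the very next proof, solving for them requires inverting both $\gamma_{123}$ and $\gamma_{135}$, so you cannot simply ``make the same substitutions'' without first checking that each $c_i$ specializes at $\og_{135}=0$. Even granting that (it does hold, but the check is nontrivial), substituting $\of_3=x(\of_3/x)$ into $\overline{\widetilde{N}}$ only pulls out a factor of $x^p$, not $x^{2p}$: you obtain an identity of the form $(\of_3/x)^p-\bar c_1\of_1^{(p^2+1)/2}=\overline{\widetilde{N}}/x^p$, whose lead term is a scalar times $x^pz^{p^3}$, not $z^{p^3}$. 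Dividing by $x^{2p}$ therefore does \emph{not} produce an algebraic relation among your generators, and subducting further against $x^pN_G(z)$ is not shown to terminate at zero. Invoking Theorem~\ref{sdx} does not close this: that theorem guarantees the SAGBI/Divide-by-$x$ algorithm terminates with \emph{some} SAGBI basis $\mathcal{B}_j$, not that $\mathcal{B}_j=\mathcal{B}$.

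The paper circumvents this by defining a fresh $\widetilde{N}:=(\of_3/(x\og_{123}^p))^p-(\og_{235}/\og_{123})^p(-\of_1/\og_{123})^{(p^2+1)/2}$, showing directly that $\lt(\widetilde{N})\propto x^pz^{p^3}$, and setting $N:=\widetilde{N}/x^p$. With this auxiliary invariant in place, the second \tat\ subducts to zero \emph{by definition} of $N$, so $\{x,\of_1,\of_2,\of_3/x,N\}$ is a SAGBI basis for the algebra $A$ it generates. The paper then establishes $A[x^{-1}]=\field[V_M]^G[x^{-1}]$ not by your $u,w'$ translation picture (which you leave unfinished) but by choosing coordinates so that $M$ has the special form with $c_{11}=1$, $c_{13}=c_{21}=0$, noting that $x^{-p}\of_1\in\field[x,y,\delta/x]^G$ has degree~$p$ and $N_G(y)$ has degree~$p^2$, and applying \cite[3.7.5]{DK}. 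Theorem~\ref{genthm} then gives $A=\field[V_M]^G$; since the lead term algebra is generated by $\{x,y^{2p},y^{p^2},y^{p^2+1},z^{p^3}\}$, replacing $\of_2$ and $N$ by $N_G(y)$ and $N_G(z)$ (same lead monomials) preserves the SAGBI property.
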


\begin{proof} Define 
$\widetilde{N}:=\left(\of_3/\left(x\og_{123}^p\right)\right)^p
-\left(\og_{235}/\og_{123}\right)^p\left(-\of_1/\og_{123}\right)^{(p^2+1)/2}$.
Working modulo the ideal $(x^{p+1},x^py)\field[x,y,z]$, we see that
$\lt(\widetilde{N})=(-xz^{p^2}/2)^p$. Define $N:=\widetilde{N}/x^p$
and $\mathcal{B}:=\{x,\of_1,\of_2,\of_3/x,N\}$.
Let $A$ denote the algebra generated by $\mathcal{B}$.
There are two non-trivial \tat s among the elements of $\mathcal{B}$:
$(\of_1^p,\of_2^2)$ which subducts to zero using the definition of $f_3$,
and $\left((\of_3/x)^p,\of_1^{(p^2+1)/2}\right)$ which subducts to zero using the
definition of $N$. Therefore $\mathcal{B}$ is a SAGBI basis for $A$.

For $\gamma_{123}\not=0$ and $\gamma_{135}=0$, we can choose generators for $G$ and a basis for $V_M$ so that
$$M=\left[\begin{array}{ccc}
1&c_{12}&0\\
0&c_{22}&c_{23}
\end{array}\right].$$
Using this description of $V_M$, the orbit of $y$ is 
$\{y+(\alpha+c_{12}\beta)x\mid \alpha,\beta\in\fp\}$.
Calculating the orbit product gives $N_G(y)=\of_2/\gamma_{123}\in A$.   

We now show that $A[x^{-1}]=\field[V_M]^G[x^{-1}]$.
As in the proof of Theorem~\ref{ffthm}, observe that 
$\field[x,y,z][x^{-1}]=\field[x,y,\delta/x][x^{-1}]$.
Since $f_1=-\gamma_{123}\delta^p-\gamma_{124}x^py^p-\gamma_{134}\delta x^{2p-2}-\gamma_{234}yx^{2p-1}$,
we see that $x_1^{-p}\of_1\in\field[x,y,\delta/x]^G$.
Furthermore, $N_G(y)\in \field[x,y,\delta/x]^G$ and
$\deg(N_G(y))\deg(x_1^{-p}\of_1)=p^3=|G|$.
Therefore, using \cite[3.7.5]{DK}, we have $\field[x,y,\delta/x]^G=\field[x,N_G(y),x_1^{-p}\of_1]$.
Hence
$$\field[x,y,z]^G[x^{-1}]=\field[x,y,\delta/x]^G[x^{-1}]=\field[x,\of_1,\of_2][x^{-1}]=A[x^{-1}].$$

Since $A[x^{-1}]=\field[V_M]^G[x^{-1}]$ and $\field[V_M]^G$ is integral over $A$, 
it follows from Theorem~\ref{genthm} that
$\mathcal{B}$ is a SAGBI basis for $\field[V_M]^G$. Hence the lead term algebra
of $\field[V_M]^G$ is generated by $\{x,y^{2p},y^{p^2},y^{p^2+1},z^{p^3}\}$
and $\{x,\of_1,N_G(y),\of_3/x,N_G(z)\}$ is a SAGBI basis for $\field[V_M]^G$.
\end{proof}

For the rest of this section, we consider representations for which $\gamma_{123}=0$.
If $c_{1j}=0$ for all $j\in\{1,2,3\}$, then $\field[V_M]^G=\field[x,y,N_G(z)]$.
Thus we will also assume that at least one $c_{1j}$ is non-zero.
Therefore, we can choose generators for $G$ and a basis for $V_M$ so that
$$M=\left[\begin{array}{ccc}
1&c_{12}&c_{13}\\
0&c_{22}&c_{23}
\end{array}\right]$$
and $c_{23}(c_{12}^p-c_{12})=c_{22}(c_{13}^p-c_{13})$.
If $\gamma_{124}=0$, we can take
\begin{equation}\label{matrix124}
M=\left[\begin{array}{ccc}
1&c_{12}&c_{13}\\
0&0&c_{23}
\end{array}\right]
\end{equation}

and if $\gamma_{135}=0$, we can take
\begin{equation}\label{matrix135}
M=\left[\begin{array}{ccc}
1&c_{12}&0\\
0&c_{22}&c_{23}
\end{array}\right].
\end{equation}

\begin{theorem} If $V_M$ is a representation of type $(1,1,1)$ and
$\gamma_{123}=\gamma_{124}=\gamma_{135}=0$, then $V_M$ is not faithful.
\end{theorem}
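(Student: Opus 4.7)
The plan is to reduce to the normalized form displayed just before the theorem and then exhibit an explicit $\fp$-linear relation among the columns of $M$. The first observation is that faithfulness of $V_M$ is equivalent to the three columns $(c_{1j},c_{2j})\in\field^2$ being linearly independent over $\fp$: since $\sigma:(\field^2,+)\to GL_3(\field)$ is injective for $p>2$, the kernel of $G\to GL_3(\field)$ coincides with the kernel of the $\fp$-linear map $\fp^3\to\field^2$ sending $e_j$ to the $j$-th column of $M$.

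Since $V_M$ is of type $(1,1,1)$, some $c_{1j}$ is nonzero, so by the paragraph preceding the theorem I may assume
$$M=\left[\begin{array}{ccc} 1 & c_{12} & c_{13}\\ 0 & c_{22} & c_{23} \end{array}\right],\qquad c_{23}a=c_{22}b,$$
where $a:=c_{12}^p-c_{12}$ and $b:=c_{13}^p-c_{13}$; the second identity is exactly what $\gamma_{123}=0$ becomes after normalization. A direct expansion of the other two minors in this form yields
$$\gamma_{124}=c_{22}c_{23}^p-c_{22}^pc_{23},\qquad \gamma_{135}=ab^p-a^pb$$
(for the latter using $c^{p^2}-c=(c^p-c)^p+(c^p-c)$ in characteristic $p$). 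Each vanishes precisely when the corresponding pair is $\fp$-proportional, with either element allowed to be zero.

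The key step is to produce $(\alpha_2,\alpha_3)\in\fp^2\setminus\{0\}$ satisfying both $\alpha_2c_{22}+\alpha_3c_{23}=0$ and $\alpha_2 a+\alpha_3 b=0$: once such a pair is found, I set $\alpha_1:=-(\alpha_2c_{12}+\alpha_3c_{13})$, and the identity $\alpha_1^p-\alpha_1=\alpha_2 a+\alpha_3 b=0$ (using $\alpha_i^p=\alpha_i$) forces $\alpha_1\in\fp$, so that $(\alpha_1,\alpha_2,\alpha_3)$ is a nontrivial $\fp$-linear relation among the columns and $V_M$ is not faithful. The pair itself comes from a short case analysis: if $c_{22}=c_{23}=0$, take any $\fp$-proportionality between $a$ and $b$ provided by $\gamma_{135}=0$; if exactly one of $c_{22},c_{23}$ vanishes, the normalization $c_{23}a=c_{22}b$ forces exactly one of $a,b$ to vanish and a standard basis vector suffices; and if both $c_{22},c_{23}$ are nonzero then $\gamma_{124}=0$ gives $c_{22}=\mu c_{23}$ with $\mu\in\fp^*$, and combining with $c_{22}b=c_{23}a$ yields $a=\mu b$, so $(\alpha_2,\alpha_3)=(1,-\mu)$ satisfies both conditions.

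The only real obstacle is the bookkeeping in the case analysis; note that $\gamma_{135}=0$ is essential only in the degenerate sub-case $c_{22}=c_{23}=0$, reflecting the fact that the remaining sub-cases already fail faithfulness from $\gamma_{123}=\gamma_{124}=0$ alone.
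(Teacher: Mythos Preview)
Your proof is correct. The approach, however, differs from the paper's. The paper exploits the preceding normalizations more aggressively: using $\gamma_{124}=0$ and $\gamma_{135}=0$ together (not just $\gamma_{123}=0$), it reduces directly to the form
\[
M=\left[\begin{array}{ccc} 1 & c_{12} & 0 \\ 0 & 0 & c_{23} \end{array}\right],
\]
after which $\og_{123}=c_{23}(c_{12}-c_{12}^p)=0$ immediately forces either $e_3$ to act trivially or $c_{12}\in\fp$ (so $c_{12}e_1-e_2$ acts trivially). Your argument instead stays with the coarser normalization coming from $\gamma_{123}=0$ alone, computes $\og_{124}$ and $\og_{135}$ explicitly in that form, and then produces an $\fp$-linear relation among the columns via a short case analysis. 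The paper's route is quicker once the simultaneous normalization is granted, but that step (combining the two separate reductions into one) is asserted rather than argued. Your route trades that for a direct computation and has the pleasant by-product you note at the end: it makes transparent that $\gamma_{135}=0$ is only needed when $c_{22}=c_{23}=0$, i.e., in the symmetric-square sub-case.
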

\begin{proof} Since $V_M$ has type $(1,1,1)$, at least one of the $c_{1j}$ is non-zero. Therefore,
since $\gamma_{124}=\gamma_{135}=0$,
we can choose generators for $G$ and a basis for $V_M$ so that
$$M=\left[\begin{array}{ccc}
1&c_{12}&0\\
0&0&c_{23}
\end{array}\right].$$
Thus $\og_{123}=c_{23}(c_{12}-c_{12}^p)=0$. If $c_{23}=0$ then $e_3$ acts trivially.
If $c_{12}^p-c_{12}=0$ then $c_{12}\in\fp$ and $c_{12}e_1-e_2$ acts trivially.
\end{proof}

\begin{theorem}
If $\gamma_{123}=0$, $\gamma_{124}=0$ and $\gamma_{135}\not=0$, then
$V_M$ is a symmetric square representation.
\end{theorem}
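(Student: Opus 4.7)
The plan is to exploit the normal form recorded in Equation~(\ref{matrix124}). Since $\gamma_{124}=0$, after the usual change of generators and basis we may assume
$$M=\left[\begin{array}{ccc} 1 & c_{12} & c_{13}\\ 0 & 0 & c_{23}\end{array}\right],$$
and the entire task becomes forcing $c_{23}=0$. Once this is established, the second row of $M$ vanishes identically, i.e.\ $c_{2j}=0$ for $j=1,2,3$, and the observation made in Section~\ref{cl3d_sec} (that a type $(1,1,1)$ representation with $c_{2j}=0$ for all $j$ is a symmetric square representation) immediately identifies $V_M$ as a symmetric square.

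First I would evaluate the remaining two hypotheses on this $M$. Expanding the $3\times 3$ determinant defining $\gamma_{123}$ along the second row is a one-line calculation that produces $\gamma_{123}=c_{23}(c_{12}-c_{12}^p)$. Next, $\gamma_{135}$ specialises to the Moore (or $p$-Vandermonde) determinant whose rows are $(1,c_{12},c_{13})$ together with their $p$-th and $p^2$-th powers; this determinant is non-zero precisely when $1,c_{12},c_{13}$ are $\fp$-linearly independent in $\field$.

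The conclusion is then a short dichotomy. The vanishing of $\gamma_{123}$ forces either $c_{23}=0$ or $c_{12}\in\fp$. The second alternative is incompatible with $\gamma_{135}\neq 0$, because if $c_{12}\in\fp$ then the first two columns of the Moore matrix would be $\fp$-proportional, making $\gamma_{135}$ vanish. Hence $c_{23}=0$, completing the argument. I do not expect any genuine obstacle: the proof is essentially a Moore-determinant observation combined with the normal form already in hand, and the only delicate point is simply to write the hypothesis $\gamma_{135}\neq 0$ in its $\fp$-linear-independence form so that the case $c_{12}\in\fp$ can be eliminated.
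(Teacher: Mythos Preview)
Your argument is correct and is essentially the paper's own proof: both pass to the normal form of Equation~\ref{matrix124}, read off $\gamma_{123}=c_{23}(c_{12}-c_{12}^p)$ (the paper obtains this from the earlier displayed constraint together with $c_{22}=0$, you by direct expansion), and then use the Moore-determinant interpretation of $\gamma_{135}$ to exclude $c_{12}\in\fp$, forcing $c_{23}=0$.
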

\begin{proof}
Using the form for $M$ given in Equation~\ref{matrix124}, $c_{22}=0$ and
$c_{23}(c_{12}^p-c_{12})=0$. However, $\gamma_{135}\not=0$
means that $c_{12}^p-c_{12}\not=0$. Thus $c_{23}=0$ and $V_M$ is a symmetric square representation.
\end{proof}

Consider the case $\gamma_{123}=0$, $\gamma_{124}\not=0$, and $\gamma_{135}\not=0$.
The Pl\"ucker relations for $\Gamma$ determined by the sequences $(1,2),(1,3,4,5)$
and $(2,3),(1,2,4,5)$ give:
\begin{eqnarray*}
\gamma_{123}\gamma_{135}-\gamma_{124}\gamma_{135}+\gamma_{125}\gamma_{134}&=&0,\cr
\gamma_{123}\gamma_{245}+\gamma_{234}\gamma_{125}-\gamma_{235}\gamma_{124}&=&0.
\end{eqnarray*}
If $\gamma_{123}=0$, $\gamma_{124}\not=0$, and $\gamma_{135}\not=0$, then
$$f:=\of_1(-\og_{124}x^p)^{-1}=\of_2(-\og_{125}x^{p^2-p})^{-1}
=y^p+\frac{\og_{134}}{\og_{124}}\delta x^{p-2}+\frac{\og_{234}}{\og_{124}}y x^{p-1}$$
with $c:=\og_{134}/\og_{124}\not=0$.
Since $\gamma_{135}\not=0$, we have
$$N_G(y)=y^{p^3}+\frac{\og_{137}}{\og_{135}}y^{p^2}x^{p^3-p^2}+\frac{\og_{157}}{\og_{135}}y^p x^{p^3-p}+
\frac{\og_{357}}{\og_{135}}y x^{p^3-1}.$$
Note that $\og_{357}/\og_{135}=\og_{135}^{p-1}=d_3(W)$, $\og_{157}/\og_{135}=d_2(W)$ and
$\og_{137}/\og_{135}=d_1(W)$, where $d_i(W)$ is the $i^{th}$ Dickson invariant for 
$W={\rm Span}_{\fp}\{c_{11},c_{12},c_{13}\}$. 
Working modulo the ideal $(x^{p^3-p^2-p-1})\field[V_M]$,
we see that the lead term of
$$\widetilde{h}:=N_G(y)-f^{p^2}+c^{p^2}f^{2p}x^{p^3-2p^2}
-2c^{p^2+p}f^{p+2}x^{p^3-p^2-2p}$$
is $-4c^{p^2+p+1}y^{p^2+p+2}x^{p^3-p^2-p-2}$. Thus $h:=\widetilde{h}(-4c^{p^2+p+1}x^{p^3-p^2-p-2})^{-1}$
has lead term $y^{p^2+p+2}$.

\begin{theorem}
If $\gamma_{123}=0$, $\gamma_{124}\not=0$ and $\gamma_{135}\not=0$, then
$\{x, f,h,N_G(z)\}$ is a SAGBI basis for $\field[V_M]^G$. Furthermore,
there is a single relation among the generators constructed by subducting
$f^{p^2+p+2}-h^p$.
\end{theorem}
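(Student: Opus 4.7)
The plan is to mirror the structure of Theorem~\ref{rk3gen}: verify that the proposed set consists of invariants, identify the unique non-trivial \tat, construct an auxiliary invariant with lead monomial $z^{p^3}$ by subducting that \tat, and conclude via Theorem~\ref{genthm}. First, $x$ and $N_G(z)$ are obviously invariant; $f$ is invariant because it equals $\of_1/(-\og_{124}x^p)$ and $\og_{124}\neq 0$. The invariance of $h$ follows from its construction: $\widetilde{h}$ is an explicit polynomial in $N_G(y)$ and powers of $f$ and $x$, and the hypothesis $c=\og_{134}/\og_{124}\neq 0$ together with the lead-term computation preceding the statement guarantees that $x^{p^3-p^2-p-2}$ divides $\widetilde{h}$ in $\field[V_M]$, so $h\in\field[V_M]^G$. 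The lead monomials are $x$, $\lm(f)=y^p$, $\lm(h)=y^{p^2+p+2}$, and $\lm(N_G(z))=z^{p^3}$. Since $p$ is odd, $\gcd(p,p^2+p+2)=\gcd(p,2)=1$, so the unique minimal non-trivial \tat\ is $(f^{p^2+p+2},h^p)$; $N_G(z)$ participates in no \tat\ because $z^{p^3}$ cannot be matched by any product of $x$- and $y$-monomials.

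Next I would verify that $A:=\field[x,f,h,N_G(z)]$ satisfies $A[x^{-1}]=\field[V_M]^G[x^{-1}]$. Since $f=y^p+c\delta x^{p-2}+(\og_{234}/\og_{124})yx^{p-1}$ with $c\neq 0$, the Jacobian of $(y,f)$ with respect to $(y,z)$ equals $-cx^{p-1}$, a unit after inverting $x$; hence $\field[V_M][x^{-1}]=\field[x^{\pm 1},y,f]$. On this ring $G$ fixes $x$ and $f$ and acts on $y$ by $y\mapsto y+c_{1j}x$, and since $\gamma_{135}\neq 0$ this gives a faithful action of $G\cong(\zp)^3$ on $y$; therefore the invariants form $\field[x^{\pm 1},f,N_G(y)]$. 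The defining equation of $h$ rearranges to
\begin{equation*}
N_G(y)=-4c^{p^2+p+1}x^{p^3-p^2-p-2}h+f^{p^2}-c^{p^2}x^{p^3-2p^2}f^{2p}+2c^{p^2+p}x^{p^3-p^2-2p}f^{p+2},
\end{equation*}
which displays $N_G(y)\in\field[x,f,h]\subseteq A$, giving $A[x^{-1}]=\field[V_M]^G[x^{-1}]$.

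The remaining step is to subduct the \tat. Following the template of Lemma~\ref{rank3-gen-lt}, I would form $\widetilde{N}:=f^{p^2+p+2}-h^p$ together with correction terms $\alpha_{ijk}x^if^jh^k$ chosen to cancel every surviving monomial modulo an ideal $(x^{M_1},yx^{M_2})\field[V_M]$, leaving a lead term that is a non-zero scalar multiple of $x^{k_0}z^{p^3}$. Setting $N:=\widetilde{N}/x^{k_0}$, the set $\mathcal{B}':=\{x,f,h,N\}$ is a SAGBI basis for $A$ because the sole non-trivial \tat\ subducts to zero by construction; since $\{x,f,N\}$ is a homogeneous system of parameters (lead monomials $x,y^p,z^{p^3}$), $\field[V_M]^G$ is integral over $A$, and Theorem~\ref{genthm} delivers $A=\field[V_M]^G$. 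Since $N$ and $N_G(z)$ share lead monomial $z^{p^3}$ and both lie in $A$, the lead-term algebras of $\mathcal{B}$ and $\mathcal{B}'$ agree, so $\mathcal{B}$ is itself a SAGBI basis for $\field[V_M]^G$; rewriting the subduction equation with $N_G(z)$ in place of $N$ yields the single asserted relation. The main obstacle is the explicit determination of the correction terms $\alpha_{ijk}$ and verifying that the remaining contribution is exactly one $x^{k_0}z^{p^3}$ term. This is a degenerate analogue of Lemma~\ref{rank3-gen-lt}: the vanishing of $\gamma_{123}$ removes the dominant monomials that powered the earlier cancellations, so one must instead exploit Pl\"ucker relations among the surviving $\gamma_{ijk}$ (drawn from rows $1,2,3,4,5,7$ of $\Gamma$) to close the subduction.
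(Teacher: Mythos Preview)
Your proposal is correct and follows essentially the same route as the paper: verify $A[x^{-1}]=\field[V_M]^G[x^{-1}]$ via $N_G(y)\in\field[x,f,h]$ and the fact that $f$ is linear in $z$, subduct the unique \tat\ $(f^{p^2+p+2},h^p)$ to produce an invariant with lead monomial a scalar times $x^{k_0}z^{p^3}$, and apply Theorem~\ref{genthm}. The one point where the paper goes further is that it actually writes down the correction terms: they are expressed not via Pl\"ucker relations among the $\gamma_{ijk}$ but in terms of $c=\og_{134}/\og_{124}$ and the Dickson invariants $d_1(W),d_2(W),d_3(W)$ of $W=\Span_{\fp}\{c_{11},c_{12},c_{13}\}$ (with a separate formula for $p=3$), and the lead-term claim is verified by direct expansion modulo $(x^{p^2+2p+1},yx^{p^2+2p})$, yielding $k_0=p^2+2p$.
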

\begin{proof} 
For $p>3$ define
\begin{eqnarray*}
\widetilde{N}&:=&f^{p^2+p+2}-h^p
-2cf^{p^2}hx^{p-2}
-\frac{d_1(W)}{2c^{p^2+p}}f^{p^2+p}x^{2p}\\&&
+\frac{d_1(W)}{2c^{p^2}}f^{p^2+2}x^{p^2}
-\frac{d_1(W)}{c^{p^2-1}}f^{p^2-p}hx^{p^2+p-2}
-\frac{d_2(W)}{2c^{p^2+p}}f^{p^2+1}x^{p^2+p}\\&&
+\frac{d_2(W)}{2c^{p^2+p-1}}hf^{p^2-p-1}x^{p^2+2p-2}
-\frac{2d_3(W)}{c^{p^2+p}}f^{(p-3)(p+1)/2}x^{p^2+2p-1}\left(\frac{h}{4}\right)^{(p+1)/2}
\end{eqnarray*}
and for $p=3$ define
\begin{eqnarray*}
\widetilde{N}&:=&f^{p^2+p+2}-h^p
+cf^{p^2}hx^{p-2}
+\left(\frac{d_1(W)+c^{18}}{c^{12}}\right)f^{p^2+p}x^{2p}\\&&
-\left(\frac{d_1(W)+c^{18}}{c^{9}}\right)f^{p^2+2}x^{p^2}
-\left(\frac{d_1(W)+c^{18}}{c^{8}}\right)f^{p^2-p}hx^{p^2+p-2}\\&&
+\left(\frac{d_2(W)+d_1(W)c^6+c^{24}}{c^{12}}\right)f^{p^2+1}x^{p^2+p}\\&&
-\left(\frac{d_2(W)+d_1(W)c^6+c^{24}}{c^{11}}\right)hf^{p^2-p-1}x^{p^2+2p-2}\\&&
+\left(\frac{d_3(W)+d_1(W)c^8+d_2(W)c^2+c^{26}}{c^{12}}\right)x^{14}h^2.
\end{eqnarray*}
Working modulo the ideal $(x^{p^2+2p+1},yx^{p^2+2p})\field[x,y,z]$, an explicit calculation gives
$\lt(\widetilde{N})=\frac{1}{4}z^{p^3}x^{p^2+2p}/c^{p^2+p}$. Define $N:=\widetilde{N}/x^{p^2+p}$ and
$\mathcal{B}':=\{x,f,h,N\}$. Let $A$ denote the subalgebra generated by $\mathcal{B}'$ and note that
$\mathcal{B}'$ is a SAGBI basis for $A$.
It follows from the definition of $\widetilde{h}$ that $N_G(y)\in A$. Observe that $f$ is degree $1$
in $z$ with coefficient $-cx^{p-1}$. Therefore, by \cite[Theorem~2.4]{CampChuai}, $A[x^{-1}]=\field[V_M]^G$.
Applying Theorem~\ref{genthm}, we see that $\mathcal{B}'$ is a SAGBI basis for $\field[V_M]^G$. Hence
the lead term algebra of $\field[V_M]^G$ is generated by $\{x,y^p,y^{p^2+p+2},z^{p^3}\}$. Therefore
$\{x,f,h,N_G(z)\}$ is a SAGBI basis for $\field[V_M]^G$.
\end{proof}

Consider the case $\gamma_{123}=\gamma_{135}=0$ and $\gamma_{124}\not=0$. Since $\gamma_{123}=0$, using the form
for $M$ given in Equation~\ref{matrix135}, we see that $c_{23}(c_{12}^p-c_{12})=0$. If we assume $V_M$ is faithful,
then $c_{23}\not=0$ and $c_{12}\in\fp$. In this case $N_G(y)=y^p-yx^{p-1}$ and 
$$N_G(\delta)=\delta^{p^2}+d_1(U)\delta^p x^{2p^2-2p}+d_2(U)\delta x^{2p^2-2}$$
where $d_1(U)=\og_{126}/\og_{124}$ and $d_2(U)=\og_{146}/\og_{124}$ are the
Dickson invariants for $U={\rm Span}_{\fp}\{c_{22},c_{23}\}$. Working modulo the ideal
$(x^{p^2})\field[V_M]$ we see that the lead term of
$$\widetilde{g}:=N_G(\delta)-N_G(y)^{2p}-2N_G(y)^{p+1}x^{p(p-1)}$$
is $-2y^{p^2+1}x^{p^2-1}$. Thus $g:=\widetilde{g}(-2x^{p^2-1})^{-1}$ has lead term $y^{p^2+1}$.

\begin{theorem}
If $V_M$ is a faithful representation with 
$\gamma_{123}=\gamma_{135}=0$ and $\gamma_{124}\not=0$, then $\{x,N_G(y),g,N_G(z)\}$ is a
SAGBI basis for $\field[V_M]^G$. Furthermore,
there is a single relation among the generators constructed by subducting
$N_G(y)^{p^2+1}-g^p$.
\end{theorem}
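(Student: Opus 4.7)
The plan is to apply the template that has governed Section~\ref{r3d3_sec}: build a set $\mathcal{B}' := \{x, N_G(y), g, N\}$ where $N$ is an invariant with $\lm(N) = z^{p^3}$ and then invoke Theorem~\ref{genthm}. Write $A'$ for the algebra generated by $\mathcal{B}'$. Since $\lm(N_G(y)^{p^2+1}) = y^{p^3+p} = \lm(g^p)$, this is the single non-trivial \tat\ among $\{x, N_G(y), g\}$, and the construction of $N$ will arise by subducting it and dividing out an appropriate power of $x$.

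To build $N$, I would work modulo an ideal of the form $(x^{k+1}, yx^{k})\field[x,y,z]$ for suitable $k$, expanding both $g^p$ and $N_G(y)^{p^2+1}$ using the explicit forms $N_G(y) = y^p - yx^{p-1}$ and $N_G(\delta) = \delta^{p^2} + d_1(U)\delta^p x^{2p^2-2p} + d_2(U)\delta x^{2p^2-2}$, and using the definition $g = \widetilde g/(-2x^{p^2-1})$ with $\widetilde g = N_G(\delta) - N_G(y)^{2p} - 2N_G(y)^{p+1}x^{p(p-1)}$. After collecting terms and adding correction monomials of the form $x^a N_G(y)^b g^c$ chosen to kill the surviving lower $y$-degree pieces (such as those involving $\delta y^{p^3}$ and $yx^{\cdot}y^{p^3}$), the combination $\widetilde N := N_G(y)^{p^2+1} - g^p + (\text{corrections})$ should reduce modulo the ideal to a scalar multiple of $x^{k} z^{p^3}$. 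Setting $N := \widetilde N / x^{k}$ gives the desired invariant, and by construction the \tat\ $(N_G(y)^{p^2+1}, g^p)$ subducts to zero, so $\mathcal{B}'$ is a SAGBI basis for $A'$.

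Next I would check $A'[x^{-1}] = \field[V_M]^G[x^{-1}]$. The faithfulness assumption gives $\deg(N_G(y))\deg(N_G(\delta)) = p \cdot p^2 = p^3 = |G|$, so Theorem~\ref{ffthm} yields $\field[V_M]^G[x^{-1}] = \field[x, N_G(y), N_G(\delta)][x^{-1}]$. But the definition of $g$ rearranges to
$$N_G(\delta) = -2 g\, x^{p^2-1} + N_G(y)^{2p} + 2 N_G(y)^{p+1} x^{p(p-1)} \in A',$$
so $\field[x, N_G(y), N_G(\delta)] \subseteq A'$ and the localised equality follows. Theorem~\ref{genthm} then concludes that $\mathcal{B}'$ is a SAGBI basis for $\field[V_M]^G$. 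Consequently the lead term algebra of $\field[V_M]^G$ is $\field[x, y^p, y^{p^2+1}, z^{p^3}]$; since $\{x, N_G(y), g, N_G(z)\}$ has exactly these lead monomials, it too is a SAGBI basis, and the unique relation is read off from the subduction of $N_G(y)^{p^2+1} - g^p$.

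The main obstacle will be finding the precise correction terms in $\widetilde N$: one must guess a $\field$-linear combination of products $x^a N_G(y)^b g^c$ of the correct bidegree so that every surviving monomial below $z^{p^3}x^k$ in the expansion cancels. This is a finite but delicate exercise in bookkeeping, analogous to the calculation behind Lemma~\ref{rank3-gen-lt}, but substantially easier here because $N_G(y)$ is a binomial and $N_G(\delta)$ has only three terms, so only a handful of intermediate monomials can appear.
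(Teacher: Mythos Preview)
Your outline is exactly the paper's argument: construct $N$ by subducting the \tat\ and dividing by a power of $x$, verify $A'[x^{-1}]=\field[V_M]^G[x^{-1}]$ via Theorem~\ref{ffthm} and the relation expressing $N_G(\delta)$ in terms of $g$, then invoke Theorem~\ref{genthm} and pass to $\{x,N_G(y),g,N_G(z)\}$ by matching lead monomials. Two small corrections: $\deg(N_G(\delta))=2p^2$, so the product is $p\cdot 2p^2=2p^3=2|G|$ (which is precisely the hypothesis of Theorem~\ref{ffthm}); and the ``handful'' of correction terms you anticipate is in fact a single one --- the paper takes $\widetilde{N}=N_G(y)^{p^2+1}-g^p+gN_G(y)^{p(p-1)}x^{p-1}$ and works modulo $(x^{p+1},yx^p)$, i.e.\ $k=p$.
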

\begin{proof}
Define
$\widetilde{N}:=N_G(y)^{p^2+1}-g^p+gN_G(y)^{p(p-1)}x^{p-1}$.
Working modulo the ideal $(x^{p+1},yx^{p})\field[x,y,z]$, an explicit calculation gives
$\lt(\widetilde{N})=\frac{1}{2}z^{p^3}x^p$. Define $N:=\widetilde{N}/x^p$ and
$\mathcal{B}':=\{x,N_G(y),g,N\}$. Let $A$ denote the subalgebra generated by $\mathcal{B}'$ and note that
$\mathcal{B}'$ is a SAGBI basis for $A$.
It follows from the definition of $\widetilde{g}$ that $N_G(\delta)\in A$.
Since $\deg(N_G(y))\deg(N_G(\delta))=2p^3=2|G|$,
applying Theorem~\ref{ffthm}, we see that $\field[V_M]^G[x^{-1}]=\field[x,N_G(y),N_G(\delta)][x^{-1}]=A[x^{-1}]$.
It then follows from Theorem~\ref{genthm} that $\mathcal{B}'$ is a SAGBI basis for $\field[V_M]^G$. Hence
the lead term algebra of $\field[V_M]^G$ is generated by $\{x,y^p,y^{p^2+1},z^{p^3}\}$. Therefore
$\{x,N_G(y),g,N_G(z)\}$ is a SAGBI basis for $\field[V_M]^G$.
\end{proof}

\section{Conjectures and Conclusions}\label{con_sec}

Numerous computer calculations using Magma~\cite{magma} support the following.
\begin{conjecture}
Let $V$ denote the generic three dimensional representation of $G=(\zp)^r=\langle e_1,\ldots,e_r\rangle$ over
$\rfield=\fp(x_{1j},x_{2j}\mid j=1,\ldots,r)$ and define $s=\lceil \frac{r}{2}\rceil$. Then $\rfield[V]^G$ is
a complete intersection with embedding dimension $s+3$. Furthermore, there exists a SAGBI  basis
$\{x,f_1,\ldots,f_{s+1},N_G(z)\}$ such that:\\
(a) if $r=2s$ then $\lm(f_1)=y^{p^s}$, $\lm(f_i)=y^{p^{s+i-2}+2p^{s-i+1}}$ for $i>1$ and the
relations come from subducting the \tat s $(f_2^p,f_1^{p+2})$ and
$\left(f_i^p,f_{i-1}f_1^{(p^2-1)p^{i-3}}\right)$ for $i>2$;\\
(b) if $r=2s-1$ then $\lm(f_1)=y^{2p^{s-1}}$, $\lm(f_2)=y^{p^s}$
and $\lm(f_i)=y^{p^{s+i-3}+2p^{s-i+1}}$ for $i>2$ and the
relations are constructed by subducting the \tat s $(f_1^p,f_2^2)$,
$(f_3^p,f_1f_2^p)$ and
$\left(f_i^p,f_{i-1}f_2^{(p^2-1)p^{i-4}}\right)$ for $i>3$.
\end{conjecture}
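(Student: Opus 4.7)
The strategy is to apply the SAGBI/Divide-by-$x$ algorithm to the set $\{x,f_1,f_2,N_G(z)\}$ and track what happens. Theorem~\ref{fieldthm} guarantees $\rfield[x,f_1,f_2][x^{-1}]=\rfield[V_3]^G[x^{-1}]$, and the hypotheses of Theorem~\ref{sdx} are met (taking $h_1=f_1$ or $f_2$ for the $y$-power-of-prime generator, and $N_G(z)$ for $z$), so the algorithm terminates with a SAGBI basis for $\rfield[V_3]^G$. The conjecture predicts the exact sequence of \tat\ subductions and lead monomials produced; the task is to verify these predictions inductively.

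The plan is to construct $f_3,\ldots,f_{s+1}$ recursively. At each stage, assume by induction that $\mathcal{B}_k:=\{x,f_1,\ldots,f_k\}$ is a SAGBI basis for the algebra it generates with the predicted lead monomials. To pass from stage $k$ to stage $k{+}1$, identify the non-trivial \tat\ indicated by the conjecture ($(f_2^p,f_1^{p+2})$ or $(f_k^p,f_{k-1}f_1^{(p^2-1)p^{k-3}})$ in the even case, analogously in the odd case), subduct it, and divide by the appropriate power of $x$ to produce $f_{k+1}$. The predicted lead monomials for the two sides of the \tat\ are powers of $y$ that agree, so the subduction is forced to lie in an ideal of the form $(x^a,x^{a-1}y)\rfield[x,y,z]$. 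A calculation modulo this ideal — mimicking Lemma~\ref{f3lem} and the proof of Lemma~\ref{rank3-gen-lt} — should yield the leading monomial $y^{p^{s+k-1}+2p^{s-k}}$ (or the appropriate exponent for the odd case) after dividing by the correct power of $x$. The key algebraic input for these computations is the Pl\"ucker relations on the matrix $\widetilde\Gamma$ of Section~\ref{field_section}, used as in Lemma~\ref{plucker} and Lemma~\ref{powers} to rewrite higher minors $\tilde{f}_J$ in terms of $f_1$ and $f_2$.

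Once we have constructed $\mathcal{B}_{s+1}=\{x,f_1,\ldots,f_{s+1}\}$, the next \tat\ in the sequence should involve subducting a power combination whose $y$-lead cancels completely, forcing the result to have lead monomial $x^a z^{p^r}$ for some $a\geq 0$; division by $x^a$ then produces a polynomial with lead term a scalar multiple of $z^{p^r}$, which must agree with $N_G(z)$ up to elements of $\mathcal{B}_{s+1}$. This is the step that establishes termination at embedding dimension $s+3$. Finally, since $\{x,f_1,f_2\}[x^{-1}]$ is the whole field of fractions, Theorem~\ref{genthm} applies, and the Krull dimension being $3$ forces exactly $s$ relations, so the ring is a complete intersection; the listed \tat s furnish these relations because their subductions, by construction of the $f_i$, give zero.

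The main obstacle is the inductive lead-monomial calculation: one must identify, for each $k$, the precise $\rfield$-linear combination of products $f_k^p$, $f_{k-1}f_1^{\cdots}$, and lower-order correction terms $x^{b}f_{j_1}^{\alpha_1}\cdots f_{j_m}^{\alpha_m}$ that drops the $y$-degree down to the predicted exponent, and then show the resulting coefficient of $y^{p^{s+k-1}+2p^{s-k}}$ is a non-zero explicit polynomial in the $\gamma_I$. For $r\in\{2,3\}$ this was done by direct expansion, and for $r=4,5$ presumably verified by computer algebra; extrapolating to arbitrary $r$ requires a uniform formula for the correction coefficients, analogous to but more intricate than the $c_1,c_2,c_3,c_4$ appearing before Lemma~\ref{rank2-gen-lt}. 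A clean inductive formulation of this coefficient pattern — perhaps via a recursion coming from the Pl\"ucker relations on $\Gamma$ — is the essential missing ingredient, and is what currently prevents promoting the conjecture to a theorem.
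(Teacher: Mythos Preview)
The statement you are attempting to prove is labelled a \emph{conjecture} in the paper; the authors do not offer a proof. They say only that ``numerous computer calculations using Magma support'' it, and the cases $r=2$ and $r=3$ are established in Sections~\ref{r2d3_sec} and~\ref{r3d3_sec} by explicit ad hoc constructions of the correction terms. There is therefore no proof in the paper to compare your proposal against.

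Your outline is a faithful extrapolation of the paper's method for small $r$: start from $\{x,f_1,f_2,N_G(z)\}$, invoke Theorem~\ref{fieldthm} and Theorem~\ref{sdx} to guarantee termination of the SAGBI/Divide-by-$x$ algorithm, and then attempt to track the lead monomials through the sequence of subductions. You have also correctly identified the genuine obstruction: at each stage one needs a uniform description of the $\rfield$-linear correction terms (the analogues of the $c_i$ preceding Lemma~\ref{rank2-gen-lt} and in Lemma~\ref{rank3-gen-lt}) that force the lead monomial to drop to the predicted power of $y$, together with a verification that the resulting lead coefficient is a nonzero polynomial in the $\gamma_I$. The paper does not supply such a uniform formula, and neither does your proposal; this is exactly why the statement remains a conjecture. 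Your closing paragraph is an honest assessment of the situation rather than a proof.
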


In Section~\ref{cl3d_sec}, we showed that the only three dimensional representations
of $G=(\zp)^r$ for which the ring of invariants fails to be a polynomial algebra,
are of the form $V_M$ for some $M\in\field^{2\times r}$. Rings of invariants for
these representations are parameterised by $\field^{2\times r}\rightquot GL_r(\fp)$.
In Section~\ref{r2d3_sec}, we identified 
$\gamma_{12},\gamma_{13}\in \field[\field^{2\times 2}]^{SL_2(\fp)}$
such that representations satisfying $\gamma_{12}\not=0$ and $\gamma_{13}\not=0$
are essentially generic. In particular, these representations have rings of invariants which are 
generated in degrees $1,p,p+2,p^2$ with a single relation in degree $p(p+2)$.
The remaining rank $2$,  dimension $3$ representations fall into three cases:
$\gamma_{12}\not=0$ and $\gamma_{13}=0$, giving generators in degrees $1,p,p+1,p^2$ with a relation in degree
$p(p+1)$; $\gamma_{12}=0$ and $\gamma_{13}\not=0$, giving generators in degrees $1,2,p,p^2$ with a relation in degree $2p$;
$\gamma_{12}=0$ and $\gamma_{13}=0$, which fails to be faithful. 
In Section~\ref{r3d3_sec}, we used 
$\gamma_{123},\gamma_{135},\gamma_{124}\in \field[\field^{2\times 3}]^{SL_3(\fp)}$ to stratify 
$\field^{2\times 3}\rightquot GL_3(\fp)$.
Representations satisfying $\gamma_{123}\not=0$ and $\gamma_{135}\not=0$ are essentially generic.
Representations satisfying $\gamma_{123}\not=0$ and $\gamma_{135}=0$ form a second family
and representations satisfying $\gamma_{123}=0$ fall into four cases:
$\gamma_{135}\not=0$ and $\gamma_{124}\not=0$;
$\gamma_{135}=0$ and $\gamma_{124}\not=0$;
$\gamma_{135}\not=0$ and $\gamma_{124}=0$;
$\gamma_{135}=0$ and $\gamma_{124}=0$.
The representations with $\gamma_{123}\not=0$ have rings of invariants which are complete intersections with embedding 
dimension $5$. The rings of invariants for representations with $\gamma_{123}=0$ are hypersurfaces.
We believe that for an arbitrary $r$ it is possible to stratify $ \field^{2\times r}\rightquot GL_r(\fp)$
using elements of $\field[\field^{2\times r}]^{SL_r(\fp)}$, and that  in the essentially generic case, 
the ring of invariants is a complete intersection with embedding dimension $\lceil r/2 \rceil+3$,
and that in all other cases, the ring of invariants is a complete intersection with embedding dimension
at most $\lceil r/2 \rceil+3$.

\begin{conjecture}
For a faithful modular three dimensional representation of $(\zp)^r$, the ring of invariants is a complete
intersection with embedding dimension less than or equal to $\lceil r/2 \rceil+3$.
\end{conjecture}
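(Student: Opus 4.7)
The plan is to proceed by induction on the codimension of the stratum of $\field^{2\times r}\rightquot GL_r(\fp)$ in which the matrix $M$ corresponding to $V$ lies, using the stratification by vanishing loci of the $SL_r(\fp)$-invariants $\gamma_I$ that we already see in Sections~\ref{r2d3_sec} and~\ref{r3d3_sec}. Representations of type $(2,1)$ or $(1,2)$ give polynomial rings, so one may immediately reduce to type $(1,1,1)$, i.e. to $V=V_M$ for some $M\in\field^{2\times r}$. The base of the induction is the \emph{essentially generic} stratum, on which the preceding conjecture provides a complete-intersection SAGBI basis $\{x,f_1,\ldots,f_{s+1},N_G(z)\}$ of embedding dimension exactly $s+3=\lceil r/2\rceil+3$.

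For the generic stratum one would build candidate invariants in $\field[V_M]^G$ by clearing denominators in the generic $f_i\in\rfield[x,y,z]^G$ of Section~\ref{field_section} and applying $\psi_M$; the strategy of Theorems~\ref{rk2gen} and~\ref{rk3gen} shows that the subductions of the two generic \tat{}s survive evaluation, and the hypotheses of Theorem~\ref{genthm} are then supplied by Theorem~\ref{fieldthm} together with Theorem~\ref{sdx}. This gives the bound with equality, exhibiting an explicit complete intersection of embedding dimension $\lceil r/2\rceil+3$.

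For the inductive step one passes to a stratum defined by the vanishing of some $\gamma_I$. After using the $GL_r(\fp)\times(\field\ltimes\field^*)$ action to bring $M$ into a reduced form with extra zeros, each $\of_i$ degenerates in one of two ways: either its leading term drops and one replaces $\of_i$ by $\of_i/x^{e_i}$ (the SAGBI/Divide-by-$x$ correction, as with $\of_3/x$ in the $\gamma_{123}\not=0,\gamma_{135}=0$ case), or two of the $\of_i$ become proportional modulo lower-order terms and one can be replaced by a new generator built from $N_G(y)$ or $N_G(\delta)$ (as with the passage from $f$ to $h$ in the $\gamma_{123}=0,\gamma_{124}\not=0,\gamma_{135}\not=0$ case). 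In either case the candidate generating set has size at most $s+3$; one checks the SAGBI property via Theorem~\ref{sdx}, and integrality together with the key equality $A[x^{-1}]=\field[V_M]^G[x^{-1}]$ via Theorem~\ref{ffthm} and the Pl\"ucker argument of Lemma~\ref{powers}. At least one generic relation must also degenerate (often to the trivial relation), so the number of relations drops by the correct amount to preserve the complete-intersection property.

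The main obstacle is combinatorial rather than conceptual: for general $r$ there is no clean parameterisation of the $GL_r(\fp)\times(\field\ltimes\field^*)$ orbits on $\field^{2\times r}$, and when several $\gamma_I$ vanish simultaneously the degeneration patterns of the $\of_i$ can interact in ways not visible for $r\leq 3$. Writing down, stratum by stratum, the explicit rescaled generators and verifying the lead-monomial bookkeeping is precisely the step completed here only in small rank; a general argument appears to require either a uniform orbit classification or an inductive reduction relating the rank $r$ case to suitable rank $r-1$ sub-representations, and producing such a reduction seems to be the essential difficulty.
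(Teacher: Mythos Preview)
This statement is a \emph{conjecture} in the paper, not a theorem; the paper offers no proof, only the evidence of Sections~\ref{r2d3_sec} and~\ref{r3d3_sec} together with Magma computations. So there is nothing to compare your proposal against except the empty proof.

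Your proposal is not a proof either, and you say so yourself in the final paragraph. Two concrete gaps:

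\begin{itemize}
\item Your base case is the ``essentially generic'' stratum, for which you invoke ``the preceding conjecture'' to supply the SAGBI basis $\{x,f_1,\ldots,f_{s+1},N_G(z)\}$. That preceding statement is also a conjecture in the paper (Conjecture~9.1), not a theorem. So your induction has no established base for $r\geq 4$.
\item Your inductive step asserts that on each degenerate stratum the $\of_i$ either drop lead term (and can be rescued by dividing by a power of $x$) or become redundant (and can be replaced by something built from $N_G(y)$ or $N_G(\delta)$), with the candidate set staying of size at most $s+3$ and the relations degenerating compatibly. This is exactly the phenomenon observed case-by-case for $r=2,3$, but you give no mechanism forcing it in general. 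The honest final paragraph concedes that the orbit classification and the interaction of simultaneous $\gamma_I$-vanishings are unresolved; that is precisely the content of the conjecture.
\end{itemize}

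In short, what you have written is a reasonable research strategy and an accurate description of why the problem is open, but it is not a proof, and the paper does not claim one.
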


\ifx\undefined\bysame
\newcommand{\bysame}{\leavevmode\hbox to3em{\hrulefill}\,}
\fi

\end{document}